\numberwithin{equation}{section}
\newtheorem{theorem}{Theorem}[section]
\newtheorem{lemma}[theorem]{Lemma}
\renewcommand{\Im}{\operatorname{Im}}
\newcommand{\R}{\mathbb{R}}
\newcommand{\C}{\mathbb{C}}
\begin{document}
\title[The twisted Laplacian with drift]{Riesz transforms associated with the twisted Laplacian with drift}

\author[N. Garg and R. Garg]
{Nishta Garg \and Rahul Garg}

\address[N. Garg]{Department of Mathematics, Indian Institute of Science Education and Research Bhopal, Bhopal--462066, Madhya Pradesh, India.}
\email{nishta21@iiserb.ac.in}

\address[R. Garg]{Department of Mathematics, Indian Institute of Science Education and Research Bhopal, Bhopal--462066, Madhya Pradesh, India.}
\email{rahulgarg@iiserb.ac.in}

\subjclass[2020]{Primary: 42B20. Secondary: 22E25, 58J35}
\keywords{Riesz transforms, Heat semigroup, twisted Laplacian with drift}

\begin{abstract}
We consider the Riesz transforms of arbitrary order associated with the twisted Laplacian with drift on $\C^n$ and study their strong-type $(p, p)$, $1<p<\infty$, and weak-type $(1, 1)$ boundedness. 
\end{abstract}
\maketitle

\section{Introduction}
The motivation for the present work stems from a series of recent works \cite{Li-Sjogren-Wu-drift-Euclidean-Math-Z-2016, Li-Sjogren-drift-real-hyperbolic-Potential-Anal-2017, Li-Sjogren-drift-sharp-endpoint-Euclidean-Canad-2021, Li-Sjogren-drift-Heisenberg-JFAA-2022}, where the authors studied the weak-type $(1, 1)$ boundedness of the Riesz transforms associated with the Laplacian (or the sub-Laplacian) with drift on $\R^n$, real hyperbolic spaces and the Heisenberg groups. 

\medskip On $\R^n$, equipped with the standard distance, let $\nabla = \left( \partial_1, \ldots, \partial_n \right)$ be  the first order gradient vector field and  $\Delta = \sum_{j=1}^n \partial_j^2$ be the standard Laplacian. Given a non-zero vector $\nu \in \R^n$, let the Laplacian with drift $\nu$ be defined by 
$$ \Delta_\nu = \Delta + 2 \, \nu \cdot \nabla.$$
Consider the space $L^2(\R^n, d\mu_\nu)$ formed using the exponentially growing measure $d\mu_\nu (x) = e^{2 \nu \cdot x} \, dx.$ Then it turns out that $- \Delta_\nu$ is positive-definite and essentially self-adjoint on $L^2(\R^n, d\mu_\nu)$, and the first order Riesz transform $\nabla (-\Delta_\nu)^{-1/2}$ is an isometry on $L^2(\R^n, d\mu_\nu)$. In 2004, Lohou\'{e}--Mustapha \cite{Lohoue-Mustapha-drift-Euclidean-Trans-AMS-2004} proved that the Riesz transforms $\frac{\partial^\alpha}{\partial x^\alpha} (-\Delta_\nu)^{-k/2}$ of arbitrary order $|\alpha| = k \geq 1$ are bounded on $L^p(\mathbb{R}^n, \, d \mu_\nu)$ for every $1 < p < \infty$ (see Theorems 1 and 2 in \cite{Lohoue-Mustapha-drift-Euclidean-Trans-AMS-2004}). In fact, the results proved by Lohou\'{e}--Mustapha hold for a wide class of Lie groups. 

\medskip The weak-type $(1,1)$ boundedness of the Riesz transforms  was not  considered in the literature until very recently, when in 2016 Li--Sj\"ogren--Wu \cite{Li-Sjogren-Wu-drift-Euclidean-Math-Z-2016} proved the following result. 
\begin{theorem}[Li--Sj\"ogren--Wu \cite{Li-Sjogren-Wu-drift-Euclidean-Math-Z-2016}] \label{thm:LSW-Riesz-first-order-Euclidean}
The first order Riesz transform $\nabla(-\Delta_\nu)^{-1/2}$ is of weak-type $(1,1)$ on $\mathbb{R}^n$ with respect to the measure $d \mu_\nu$, uniformly in $\nu.$ 
\end{theorem}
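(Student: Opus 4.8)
The plan is to run the Calder\'on--Zygmund programme adapted to the exponentially growing, non-doubling measure $d\mu_\nu$: split the Riesz transform into a piece localized near the diagonal, handled by a local version of the classical theory, and a piece supported away from the diagonal, which carries all the genuine difficulty. For the normalization, a rotation reduces to $\nu$ pointing along a fixed coordinate axis, and the dilation $x\mapsto x/|\nu|$ conjugates $\nabla(-\Delta_\nu)^{-1/2}$ to $\nabla(-\Delta_{\nu/|\nu|})^{-1/2}$ while carrying $d\mu_\nu$ to a dilate of $d\mu_{\nu/|\nu|}$; since weak type $(1,1)$ bounds are invariant under these operations, it suffices to treat $|\nu|=1$, whereupon the bound obtained is automatically uniform in $\nu$. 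Writing $n$ for the dimension, the multiplication operator $g\mapsto e^{\nu\cdot x}g$ is a unitary isomorphism $L^2(d\mu_\nu)\to L^2(dx)$ intertwining $-\Delta_\nu$ with $-\Delta+1$, so $e^{t\Delta_\nu}$ has, with respect to $d\mu_\nu$, the explicit Gaussian kernel $p^\nu_t(x,y)=(4\pi t)^{-n/2}e^{-|x-y+2t\nu|^2/(4t)}e^{-2\nu\cdot y}$. Subordination, $(-\Delta_\nu)^{-1/2}=\pi^{-1/2}\int_0^\infty e^{t\Delta_\nu}\,t^{-1/2}\,dt$, then gives
\[
\mathcal R_\nu(x,y)=c\int_0^\infty \nabla_x p^\nu_t(x,y)\,\frac{dt}{\sqrt t}=c\, e^{-\nu\cdot(x+y)}\bigl(\nabla G_1-\nu\,G_1\bigr)(x-y),
\]
where $G_1$ is the Bessel potential kernel of order one, the convolution kernel of $(-\Delta+1)^{-1/2}$; the $L^2(d\mu_\nu)$-boundedness of $\nabla(-\Delta_\nu)^{-1/2}$ is already available, being an isometry as recalled above.

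Next I would fix a smooth cutoff and split $\mathcal R_\nu=\mathcal R_\nu^{\mathrm{loc}}+\mathcal R_\nu^{\mathrm{glob}}$ with $\mathcal R_\nu^{\mathrm{loc}}$ supported in $\{|x-y|\le1\}$. On any ball of radius at most $2$ the weight $e^{2\nu\cdot x}$ oscillates by a factor bounded independently of the center and of $\nu$, so $d\mu_\nu$ is doubling on the unit scale with uniform constants. Using the standard near-origin estimates $|\nabla^j G_1(z)|\lesssim|z|^{-n-j+1}$ for $|z|\le1$ and $j=1,2$, one verifies that $|\mathcal R_\nu^{\mathrm{loc}}(x,y)|\lesssim e^{-2\nu\cdot x}|x-y|^{-n}\approx\mu_\nu\bigl(B(x,|x-y|)\bigr)^{-1}$ together with the matching gradient (H\"ormander) estimate relative to $d\mu_\nu$. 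Covering $\R^n$ by unit balls of bounded overlap --- on each of which $\mathcal R_\nu^{\mathrm{loc}}$ is, up to a harmless constant, a truncated classical singular integral with Lebesgue measure --- then yields the $L^2(d\mu_\nu)$-boundedness of $\mathcal R_\nu^{\mathrm{loc}}$ and its weak type $(1,1)$ bound; equivalently one invokes the Calder\'on--Zygmund theory available for locally doubling measures. In particular $\mathcal R_\nu^{\mathrm{glob}}=\mathcal R_\nu-\mathcal R_\nu^{\mathrm{loc}}$ is also bounded on $L^2(d\mu_\nu)$.

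The main obstacle is the global part, where $d\mu_\nu$ is non-doubling on large scales, no Calder\'on--Zygmund scheme applies directly, and the situation is not rescued by crude size estimates: from $|\nabla G_1(z)|+G_1(z)\lesssim|z|^{-n/2}e^{-|z|}$ for $|z|>1$ one gets $|\mathcal R_\nu(x,y)|\lesssim e^{-\nu\cdot(x+y)}e^{-|x-y|}|x-y|^{-n/2}$, and one checks from this that $\int|\mathcal R_\nu(x,y)|\,d\mu_\nu(x)=\infty$, so $\mathcal R_\nu^{\mathrm{glob}}$ is not even bounded on $L^1(d\mu_\nu)$ --- the divergence occurring in the directions where $x$ lies far downstream of $y$ along the drift, exactly where the mass of $d\mu_\nu$ accumulates. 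The heart of the matter is therefore a sharper, Gaussian-type estimate retaining the transverse decay: writing $x-y=(\sigma,w)$ with $\sigma=\nu\cdot(x-y)$ and $w$ the component orthogonal to $\nu$, one has for $\sigma>0$
\[
|\mathcal R_\nu(x,y)|\lesssim e^{-2\nu\cdot x}\,|x-y|^{-n/2}\exp\!\Bigl(-\tfrac{|w|^2}{2|x-y|}\Bigr),
\]
which displays how the exponential growth of $d\mu_\nu$ exactly cancels the radial decay of the kernel while confining the dangerous contribution to a paraboloidal neighbourhood of the ray issuing from $y$ in the direction $\nu$. Equipped with this, I would decompose $\mathcal R_\nu^{\mathrm{glob}}$ over the dyadic shells $\{2^k\le|x-y|<2^{k+1}\}$: after absorbing the exponential weights, the $k$-th piece applied to $f$ is, up to constants, an average of $|f|$ over a slab of longitudinal size $\sim2^k$ and transverse size $\sim2^{k/2}$ lying at distance $\sim2^k$ upstream of the point of evaluation, and a weak type $(1,1)$ bound --- with a constant that unfortunately grows with $k$, roughly like $2^{k/2}$ --- follows for each such piece. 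The crux of the whole argument, and the step I expect to be the main obstacle, is then the summation of these pieces, which cannot be carried out by the triangle inequality because of this growth; one must instead exploit the fine structure of the decomposition and the precise matching between the kernel's Gaussian profile and the growth of $d\mu_\nu$ --- for instance by controlling the relevant family of upstream slab-averages of $f$ by a single operator of weak type $(1,1)$ with respect to $d\mu_\nu$, the gain coming from the interplay between the displacement of the slabs and the drift weight $e^{2\nu\cdot x}$, or by verifying an integrated kernel condition of the kind required by Calder\'on--Zygmund theories tailored to non-doubling measures. Combining the contributions of $\mathcal R_\nu^{\mathrm{loc}}$ and $\mathcal R_\nu^{\mathrm{glob}}$ then yields the theorem.
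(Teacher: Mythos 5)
The statement you are proving is Theorem~\ref{thm:LSW-Riesz-first-order-Euclidean}, which the present paper cites from Li--Sj\"ogren--Wu without reproducing a proof; there is therefore no in-paper proof to compare against directly. The closest comparison is the paper's own proof of Theorem~\ref{thm:higher-order-Riesz-uniform-1-1-log-space} for the twisted Laplacian (Section~\ref{sec:kernel-estimates}), which runs by exactly the strategy you outline, so the comparison is apt. Your reduction to $|\nu|=1$, the conjugation of $-\Delta_\nu$ to $-\Delta+1$, the Bessel-potential representation of the kernel, the local Calder\'on--Zygmund estimates via the locally doubling measure, and the sharpened Gaussian bound with transverse decay are all correct and in the spirit of the actual argument, as is your observation that the full global kernel is not in $L^1(d\mu_\nu)$ in the $x$-variable.

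There is, however, a genuine gap in the treatment of the global part, which you are candid about. The dyadic-shell decomposition you propose produces weak-type constants growing like $2^{k/2}$ and cannot be summed; the two remedies you gesture at are left unproven, and the dyadic route is not in fact how the bound is obtained. The missing idea is a \emph{further} splitting of the global kernel according to the sign of $\nu\cdot(x-y)$, exactly as the paper does with its operators $\mathcal R^{\infty,1}_{k,e_1,\lambda}$ and $\mathcal R^{\infty,2}_{k,e_1,\lambda}$: write $\mathcal R^{\mathrm{glob}} = \mathcal R^{\infty,1}+\mathcal R^{\infty,2}$, with $\mathcal R^{\infty,2}$ supported on $\{\nu\cdot(x-y)\le 1\}$ and $\mathcal R^{\infty,1}$ on $\{\nu\cdot(x-y)>1\}$. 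On the upstream set $\{\nu\cdot(x-y)\le 1\}$ the factor $e^{\nu\cdot(x-y)}$ arising after absorbing the weight $e^{2\nu\cdot x}$ is bounded, so the radial decay of the Bessel kernel survives and one checks directly that
\[
\sup_{y}\int_{\R^n}\bigl|\mathcal R^{\infty,2}(x,y)\bigr|\,d\mu_\nu(x)<\infty,
\]
whence $\mathcal R^{\infty,2}$ is of \emph{strong} type $(1,1)$ uniformly in $\nu$---no decomposition into shells is needed. On the downstream set one dominates the kernel once and for all by the single model kernel
\[
\mathcal V_1(x,y) = e^{-2\nu\cdot x}\,|x-y|^{-n/2}\exp\!\left(-\frac{|w|^2}{4|x-y|}\right)\chi_{\{\nu\cdot(x-y)>1\}},
\]
where $w$ is the component of $x-y$ orthogonal to $\nu$, and then proves the weak-type $(1,1)$ bound for the operator with kernel $\mathcal V_1$ directly, by a geometric estimate on its level sets---not by summing dyadic pieces. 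This is the heart of the Li--Sj\"ogren--Wu argument (and of the weak-type estimate for $\mathcal V_1$ that the present paper invokes from Li--Sj\"ogren's 2021 paper), and it is precisely the step your proposal identifies as the main obstacle but does not supply.
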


In 2021, Li--Sj\"ogren \cite{Li-Sjogren-drift-sharp-endpoint-Euclidean-Canad-2021} extended the analysis of \cite{Li-Sjogren-Wu-drift-Euclidean-Math-Z-2016} to higher order Riesz transforms and established the following beautiful result. Let $D$ be a homogeneous differential operator of degree $k \geq 1$ on $\R^n$ with constant coefficients. Consider the $k^{th}$-order Riesz transform $R_D = D(-\Delta_\nu)^{-k/2}$. With $\partial_\nu$ denoting the  derivative in the direction of the vector $\nu$, one can express $D = \sum_{j=0}^k \partial_\nu^j D'_{k-j}$, where $D'_{k-j}$ denote the constant coefficient homogeneous differential operators of order $k-j$ involving differentiation only in the directions orthogonal to $\nu$. Define $q$ to be the number denoting the highest order of differentiation in the operator $D$ in the direction of $\nu$, in the sense that $q = \max \{j : D'_{k-j} \neq 0\} \in \{0, 1, \ldots, k\}$. 
\begin{theorem}[Li--Sj\"ogren \cite{Li-Sjogren-drift-sharp-endpoint-Euclidean-Canad-2021}] \label{thm:LS-Riesz-higher-order-sharp-Euclidean}
The Riesz transform $R_D=D(-\Delta_\nu)^{-k/2}$ is of weak-type $(1,1)$ if and only if $q \leq 2.$ On the other hand, when $q\geq 3$, there exists a constant $C=C(\nu,D)$ such that 
\begin{equation} \label{ineq:thm-LSW-higher-order-sharp-Euclidean}
\mu_\nu \{x:|R_Df(x)|>\alpha\} \leq C \int_{\mathbb{R}^n} \frac{|f|}{\alpha} \left(1+ln^+\frac{|f|}{\alpha}\right)^{\frac{q}{2}-1} d\mu_\nu.
\end{equation} 
for all $f \in L(1+ln^+L)^{\frac{q}{2}-1} (\R^n, d\mu_\nu)$ and all $\alpha>0$. 

Moreover, inequality \eqref{ineq:thm-LSW-higher-order-sharp-Euclidean} is sharp in the sense that it fails for $L(1+ln^+L)^r (\R^n, d\mu_\nu)$  for any $r < \frac{q}{2}-1$. 
\end{theorem}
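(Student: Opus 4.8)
The plan is to follow the heat-semigroup route that is by now standard for drift Laplacians, refining the first-order analysis behind \Cref{thm:LSW-Riesz-first-order-Euclidean}. After a rotation we may assume $\nu=|\nu|e_1$, so that $d\mu_\nu(x)=e^{2|\nu|x_1}\,dx$ and $\partial_\nu$ is a scalar multiple of $\partial_{x_1}$. Since $-\Delta_\nu\ge|\nu|^2>0$ on $L^2(d\mu_\nu)$, subordination gives $(-\Delta_\nu)^{-k/2}=\Gamma(k/2)^{-1}\int_0^\infty t^{k/2-1}e^{t\Delta_\nu}\,dt$, and the kernel of $e^{t\Delta_\nu}$ with respect to $d\mu_\nu$ is
\[
p_t^\nu(x,y)=(4\pi t)^{-n/2}\,e^{-|\nu|^2t}\,e^{-|\nu|(x_1+y_1)}\,e^{-|x-y|^2/(4t)},
\]
which factors as a one-dimensional drift heat kernel in $(x_1,y_1)$ times the ordinary $(n-1)$-dimensional Gaussian in the transverse variables $(x',y')$. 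Writing $D=\sum_{j=0}^k\partial_\nu^{\,j}D'_{k-j}$ as in the statement and differentiating under the integral sign yields an explicit formula for the kernel $K_D(x,y)$ of $R_D$; the dangerous contribution comes from the top term $\partial_\nu^q D'_{k-q}$, whose drift-derivatives produce a harmless constant while the remaining order-$(k-q)$ transverse operator $D'_{k-q}$ hits the Gaussian — this is why the endpoint behaviour is governed by $q$ rather than $k$, the terms with fewer drift-derivatives being strictly better. (In particular the failure of weak type $(1,1)$ for $q\ge 3$ will come out as the case $r=0$ of the sharpness part.)

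I would first dispose of the local part, the piece of $K_D$ supported in $\{|x-y|\le 1\}$ cut off smoothly: on unit scales $e^{2|\nu|x_1}$ is comparable to $e^{2|\nu|y_1}$, so this piece is a genuine Calder\'on--Zygmund kernel of convolution type up to bounded factors, and the corresponding operator is of weak type $(1,1)$ for $d\mu_\nu$ with constant depending on $\nu$ and $D$; it produces no logarithmic loss and needs only routine size and cancellation estimates.

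The heart of the proof is the global part $\{|x-y|>1\}$. After the substitution $z=x-y$ the weight recombines, $e^{-|\nu|(x_1+y_1)}e^{2|\nu|y_1}=e^{-|\nu|z_1}$, and completing the square in $z_1$ against $e^{-z_1^2/(4t)}$ produces a factor $e^{|\nu|^2t}$ that cancels the $e^{-|\nu|^2t}$ of the heat kernel, while the power $t^{k/2-1}$ combines with the $t^{-(k-q)/2}$ from $D'_{k-q}$ to leave $t^{q/2-1}$. Thus the global operator is dominated by
\[
|\nu|^q\int_0^\infty t^{q/2-1}\,\bigl(\mathcal G_t*|f|\bigr)(x)\,dt ,
\]
where $\mathcal G_t$ is, up to harmless polynomial factors, a Gaussian of width $\sqrt t$ centred at $-2|\nu|t\,e_1$, and the constraint $|z|>1$ confines $t$ to $t\gtrsim c(\nu)$. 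When $q\le 2$ one has $q/2-1\le 0$; reducing to a one-dimensional model along the drift axis together with a transverse Gaussian average at scale $\sqrt t$, one shows that this operator — hence $R_D$ — is of weak type $(1,1)$ for $d\mu_\nu$. When $q\ge 3$ the factor $t^{q/2-1}$ forces the $L(1+\ln^+L)^{q/2-1}$ bound: one decomposes $f$ both spatially and across its level sets, controls the resulting pieces by combining the $L^2(d\mu_\nu)$-boundedness of $R_D$ with the kernel estimate, and computes the distribution function through a careful accounting of the dyadic scales $t\sim 2^j$, in which the averaging radius $\sqrt t$, the displacement $2|\nu|t$ along $e_1$ and the exponential growth of $d\mu_\nu$ conspire to yield precisely $q/2-1$ logarithmic factors. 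For the sharpness I would test $R_D$ on functions localised far out along the negative $x_1$-axis, normalised to lie in $L(1+\ln^+L)^r(d\mu_\nu)$ for a prescribed $r<q/2-1$, and check from the dangerous term of the kernel that $\mu_\nu\{|R_Df|>\alpha\}$ cannot be bounded by $C\alpha^{-1}\|f\|_{L(1+\ln^+L)^r(d\mu_\nu)}$ for any constant $C$.

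The main obstacle is this global part, and within it the exact bookkeeping of the logarithmic power: obtaining the upper bound with the sharp exponent $q/2-1$ — not $q/2$ or worse — and simultaneously producing extremizers that rule out any improvement both rest on a delicate, genuinely one-dimensional analysis along the drift direction and on controlling how the transverse Gaussian averaging at scale $\sqrt{|x_1-y_1|}$ interacts with the non-doubling measure $d\mu_\nu$. Making the dichotomy $q\le 2$ versus $q\ge 3$ fall out exactly is the crux of the argument.
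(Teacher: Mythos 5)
This theorem is cited from Li--Sj\"ogren \cite{Li-Sjogren-drift-sharp-endpoint-Euclidean-Canad-2021} and is not proved in the present paper, so there is no in-paper argument to compare against; the paper instead imports from that reference exactly the pieces it needs (the model kernels $\mathcal{V}_k$ of \eqref{def-kernel-Vk}--\eqref{def-operator-Vk}, Propositions 3.1 and 3.4 there, and the asymptotic \eqref{estimate:gamma-type-integral}). That said, your reduction is correct and consistent with the Li--Sj\"ogren scheme: rotation to $\nu=|\nu|e_1$, subordination, the factorized drift heat kernel, the local/global split at unit scale, completing the square in the drift variable, and the identification of $\partial_\nu^q D'_{k-q}$ as the dominant term so that the global kernel has size comparable to $e^{-2|\nu|x_1}\,|x-y|^{(q-n-1)/2}\exp\bigl(-|x'-y'|^2/(4|x-y|)\bigr)$ in the range $x_1-y_1>1$ --- precisely the shape of the model kernel $V_q$.

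What you have not proved is the actual content of the theorem. Once the global part is reduced to a $V_q$-type operator, the three substantive assertions --- weak type $(1,1)$ for $q\le 2$, the $L(1+\ln^+L)^{q/2-1}$ bound for $q\ge 3$ with precisely that exponent, and its failure for every $r<q/2-1$ --- are stated as targets (``one shows'', ``forces the bound'', ``I would test'') rather than derived, and you yourself flag them as ``the crux of the argument.'' These correspond to Propositions 3.1 and 3.4 and the sharpness section of \cite{Li-Sjogren-drift-sharp-endpoint-Euclidean-Canad-2021}; until they are filled in, this is a proof plan, not a proof. One concrete refinement to the plan: the global region $\{|x-y|>1\}$ should be further cut into $\{x_1-y_1>1\}$ and $\{x_1-y_1\le 1\}$ (as in the present paper's proof of \Cref{thm:higher-order-Riesz-uniform-1-1-log-space}); only the former produces the $V_q$ behaviour and any loss of weak $(1,1)$, while the latter is already strong type $(1,1)$ by a direct kernel size estimate, and keeping them together obscures why the drift direction alone governs the threshold $q=2$.
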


Moving on to the Heisenberg group $\mathbb{H}^n = \R^n \times \R^n \times \R$, we consider the sub-Laplacian $ \mathcal{L} = \sum_{j=1}^n (\mathcal{X}_j^2 + \mathcal{Y}_j^2) $ where $ \mathcal{X}_j, \, \mathcal{Y}_j $ are certain left invariant vector fields playing the role of $ \partial_j $ on $ \R^n.$ We define the sub-Laplacian with non-zero drift $\nu = (a,b) \in \R^n \times \R^n$ by $ \mathcal{L}_\nu = \mathcal{L}+2\sum_{j=1}^n ( a_j \mathcal{X}_j + b_j \mathcal{Y}_j).$ As it turns out, this is self adjoint on $ L^2(\mathbb H^n,d\mu_\nu) $ where $d\mu_\nu (x,y,t) = e^{2 (a \cdot x + b \cdot y)} \, dx \, dy \, dt$. Consider  $\widetilde{\mathcal{R}}_{k, \nu},$ the Riesz transforms of order $k \geq 1$ on $\mathbb{H}^n$ associated  to the sub-Laplacian with drift to be defined later (see Subsection \ref{subsec:setup-definitions-with-drift}, leading to definition \eqref{def:Riesz-transform-gen-form-Heisenberg}). As mentioned earlier, the following result on the $L^p$-boundedness of $\widetilde{\mathcal{R}}_{k, \nu}$ is due to Lohou\'{e}--Mustapha (see Theorem 2 in \cite{Lohoue-Mustapha-drift-Euclidean-Trans-AMS-2004}). 
\begin{theorem}[Lohou\'{e}--Mustapha \cite{Lohoue-Mustapha-drift-Euclidean-Trans-AMS-2004}] \label{thm:LM-Lp-Riesz-Heisenberg}
For any $1<p<\infty$ and $k \geq 1$, the Riesz transforms $\widetilde{\mathcal{R}}_{k, \nu}$ are bounded on $L^p(\mathbb{H}^n,d \mu_\nu)$, uniformly in $\nu$. 
\end{theorem}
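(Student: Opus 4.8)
The plan is to establish the $L^p$-boundedness of $\widetilde{\mathcal R}_{k,\nu}$ by heat-semigroup and Littlewood--Paley--Stein methods, using crucially that, although $(\mathbb H^n, d\mu_\nu)$ has exponential volume growth, the heat semigroup $T_t := e^{t\mathcal L_\nu}$ is still a \emph{symmetric diffusion (Markovian) semigroup} on $L^p(\mathbb H^n, d\mu_\nu)$, so that Stein's square-function machinery applies with no doubling hypothesis. The first step is the conjugation identity: writing $\phi(x,y,t) = a\cdot x + b\cdot y$, one has $\mathcal L\,e^\phi = |\nu|^2 e^\phi$, hence, as operators, $\mathcal L_\nu = e^{-\phi}\,\mathcal L\,e^\phi - |\nu|^2$; since $d\mu_\nu = e^{2\phi}\,dx\,dy\,dt$, multiplication by $e^\phi$ is unitary from $L^2(d\mu_\nu)$ onto $L^2(\mathbb H^n)$. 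A short integration by parts then identifies the Dirichlet form of $-\mathcal L_\nu$ on $L^2(d\mu_\nu)$ with $\int_{\mathbb H^n}\sum_{j=1}^n\big(|\mathcal X_j f|^2 + |\mathcal Y_j f|^2\big)\,d\mu_\nu$, so $-\mathcal L_\nu$ is positive and self-adjoint; its heat kernel with respect to $d\mu_\nu$ is $p_t(g,g') = e^{-t|\nu|^2}\,e^{-\phi(g)-\phi(g')}\,h_t({g'}^{-1}g)$, where $h_t$ is the positive driftless Heisenberg heat kernel, and, together with $T_t 1 = 1$ (immediate from $\mathcal L\,e^\phi = |\nu|^2 e^\phi$), this exhibits $(T_t)_{t>0}$ as a symmetric Markov semigroup on $(\mathbb H^n,\mu_\nu)$.

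The second step is to reduce the assertion to a horizontal square-function estimate. For $1<p<\infty$, Stein's theory yields the $L^p(d\mu_\nu)$-boundedness of the vertical square functions $g_m(f) = \big(\int_0^\infty |t^m\partial_t^m T_t f|^2\,\tfrac{dt}{t}\big)^{1/2}$ and the reverse inequality $\|f\|_p \lesssim \|g_m(f)\|_p$ (the fixed subspace of $(T_t)$ in $L^p$ being trivial for $p<\infty$). Writing $\widetilde{\mathcal R}_{k,\nu} = D(-\mathcal L_\nu)^{-k/2}$ with $D$ the relevant left-invariant differential operator, homogeneous of degree $k$ in the $\mathcal X_j,\mathcal Y_j$ (and iterating what follows if $\widetilde{\mathcal R}_{k,\nu}$ is instead a $k$-fold product of first-order Riesz transforms), I would use the subordination identity $(-\mathcal L_\nu)^{-k/2} = \Gamma(k/2)^{-1}\int_0^\infty t^{k/2}T_t\,\tfrac{dt}{t}$ on the spectral subspace where it converges, together with the standard Littlewood--Paley--Stein polarisation — pair against $h\in L^{p'}(d\mu_\nu)$, insert semigroup factors, distribute the differentiation $D$, then Cauchy--Schwarz in $t$ and H\"older — to bound $|\langle \widetilde{\mathcal R}_{k,\nu}f,h\rangle|$ by $\|G_D(f)\|_p\,\|g_{\ast}(h)\|_{p'}$, with $g_\ast$ a vertical square function of $h$, where
\[
G_D(f)(g) \;=\; \Big(\int_0^\infty \big|\,t^{k/2}\,D\,T_t f(g)\,\big|^2\,\tfrac{dt}{t}\Big)^{1/2}
\]
is the horizontal square function of order $k$. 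Everything thus reduces to $\|G_D(f)\|_{L^p(d\mu_\nu)} \le C_p\|f\|_{L^p(d\mu_\nu)}$, uniformly in $\nu$.

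The heart of the matter, and the step I expect to be the main obstacle, is this last estimate when $k\ge 2$. For $k=1$ it follows from the Gaussian gradient bound for $D_g p_t$ — inherited from the classical driftless estimate $|Dh_t(g)|\le C\,t^{-(Q+1)/2}e^{-|g|^2/(ct)}$, $Q=2n+2$, via the conjugation formula, after absorbing the (linear, hence $\nu$-controlled) exponential factors produced by differentiating $e^{-\phi}$ into the Gaussian — and, for $1<p\le 2$, alternatively and more softly from the Dirichlet-form structure by Stein's carr\'e-du-champ argument (along $u=T_t f$, $\sum_j(|\mathcal X_j u|^2+|\mathcal Y_j u|^2) = \tfrac12(\mathcal L_\nu(|u|^2)-\partial_t|u|^2)$). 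For general $D$ of degree $k$ one factors $DT_t = (DT_{t/2})\circ T_{t/2}$ and argues by induction on $k$, using the order-$k$ derivative bounds $|D_g p_t(g,g')|\le C\,t^{-(Q+k)/2}e^{-d(g,g')^2/(ct)}$ (times a $\nu$-harmless exponential factor) together with the vertical square functions and the sub-Markovian property; since $D$ does not commute with $\mathcal L_\nu$, one must track the commutators $[\mathcal L_\nu,Z]$ — again controlled differential operators built from $\mathcal X_j,\mathcal Y_j,\partial_t$ — but these only contribute lower-order terms that close the induction. Uniformity in $\nu$ then comes out automatically, because the $\nu$-dependent factors $e^{\phi(g')-\phi(g)}$ and $e^{-t|\nu|^2}$ appearing in the derivatives of $p_t$ occur only in combinations that are neutralised against the measure $e^{2\phi(g')}\,dg'$ upon integrating in $g'$ — precisely the content of $T_t 1 = 1$ — so every constant ends up depending only on $\mathbb H^n$ and $k$. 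The genuine difficulty is establishing the higher-order horizontal square-function bound in the non-doubling setting, where classical \CZ\ theory is unavailable and one must instead exploit the precise Gaussian decay of the heat-kernel derivatives, and their cancellation $\int D_g p_t(g,g')\,dg' = 0$, directly.
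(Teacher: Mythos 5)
The paper does not prove this statement: Theorem~\ref{thm:LM-Lp-Riesz-Heisenberg} is cited verbatim from Lohou\'e--Mustapha (Theorem~2 of \cite{Lohoue-Mustapha-drift-Euclidean-Trans-AMS-2004}), and the only content the paper adds is the remark, immediately after the statement, that the uniformity of the operator norm in $\nu$ (which Lohou\'e--Mustapha do not state) follows at once from dilations and rotations on $\mathbb{H}^n$. There is thus no internal proof to compare against, and what you have written is a reconstruction of the original theorem rather than of anything in this paper.

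As a reconstruction, your Littlewood--Paley--Stein outline is in the right spirit and the preparatory steps check out: the identity $\mathcal L_\nu = e^{-\phi}\,\mathcal L\,e^\phi - |\nu|^2$ is correct, multiplication by $e^\phi$ is indeed unitary from $L^2(d\mu_\nu)$ to $L^2(\mathbb H^n)$, the conjugated heat kernel is the one given in the paper (cf.~\eqref{eq:Heat-ker-twisted-lap-drift}), and $T_t 1 = 1$ does hold (it amounts to the exponential moment $\int e^{-\phi}h_t = e^{t|\nu|^2}$, which is the adjoint of $\mathcal L e^\phi = |\nu|^2 e^\phi$), so $(T_t)$ is a symmetric Markov semigroup on $(\mathbb H^n,\mu_\nu)$ and Stein's square-function theory is available without doubling. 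However, you yourself flag the decisive step --- the horizontal square-function bound $\|G_D f\|_p \lesssim \|f\|_p$ for $k\ge 2$ --- as ``the main obstacle'' and do not close it. As written, the induction on $k$ is an outline, not a proof: the claim that the commutators $[\mathcal L_\nu, Z]$ ``only contribute lower-order terms that close the induction'' would require the uniform Gaussian bounds on all horizontal derivatives of $p_t$ (including the delicate first-order gradient bound on $\mathbb H^n$) to be stated precisely and plugged through the square-function machinery, and none of that is supplied. So this is a genuine gap, albeit an honestly acknowledged one.

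Finally, on uniformity in $\nu$: your proposal tries to get it by tracking constants through the Gaussian estimates and using $T_t 1 = 1$. That is plausible but indirect. The paper's observation is much cleaner: the Heisenberg dilations $\delta_r(z,t)=(rz,r^2t)$ scale $\nu$ to $r\nu$ and $d\mu_\nu$ to a multiple of $d\mu_{r\nu}$, and a rotation in the $(x,y)$-variables sends any unit $\nu$ to $e_1$; both operations commute with the structure defining the Riesz transforms, so the $L^p$ operator norm is literally the same for every $\nu\neq 0$. You should use that reduction rather than trying to keep $\nu$ explicit in the estimates.
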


Although in \cite{Lohoue-Mustapha-drift-Euclidean-Trans-AMS-2004}  the uniformity of the operator norm on the drift vector $\nu$ is not mentioned, by making use of the dilations and rotations on $\mathbb{H}^n$ it is very easy to verify that the operator norms are indeed independent of  the  drift vector $\nu$.

\medskip Recently, Li--Sj\"ogren \cite{Li-Sjogren-drift-Heisenberg-JFAA-2022} studied analogue of Theorem \ref{thm:LSW-Riesz-first-order-Euclidean} and a partial analogue of Theorem \ref{thm:LS-Riesz-higher-order-sharp-Euclidean} on the Heisenberg group $\mathbb{H}^n$. 
\begin{theorem}[Li--Sj\"ogren \cite{Li-Sjogren-drift-Heisenberg-JFAA-2022}] \label{thm:LS-Riesz-Heisenberg}
The first order Riesz transforms $\widetilde{\mathcal{R}}_{1, \nu}$ are of weak-type $(1,1)$ with respect to $d\mu_\nu$, uniformly in $\nu$. But, for $k \geq 3$, not all the Riesz transforms $\widetilde{\mathcal{R}}_{k, \nu}$ are of weak-type $(1,1)$ with respect to $d\mu_\nu.$
\end{theorem}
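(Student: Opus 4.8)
The plan is to transplant to $\mathbb{H}^n$ the method that Li--Sj\"ogren--Wu and Li--Sj\"ogren developed in the Euclidean and real hyperbolic settings. The starting point is that conjugation by the exponential weight, $f \mapsto e^{a\cdot x + b\cdot y}\, f$, is a unitary map $L^2(\mathbb{H}^n, d\mu_\nu) \to L^2(\mathbb{H}^n, dg)$ intertwining $-\mathcal{L}_\nu$ with the strictly positive operator $-\mathcal{L} + |\nu|^2$; consequently the heat semigroup $e^{t\mathcal{L}_\nu}$ has, with respect to $d\mu_\nu$, the kernel
\begin{equation*}
K_t^\nu(g, g') = e^{-t|\nu|^2}\, e^{-(a\cdot x + b\cdot y)}\, e^{-(a\cdot x' + b\cdot y')}\, p_t\big((g')^{-1} g\big),
\end{equation*}
where $p_t$ is the ordinary heat kernel of $\mathcal{L}$. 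Plugging this into the subordination formula $(-\mathcal{L}_\nu)^{-k/2} = c_k \int_0^\infty e^{t\mathcal{L}_\nu}\, t^{k/2-1}\, dt$ realizes $\widetilde{\mathcal{R}}_{k,\nu} = D(-\mathcal{L}_\nu)^{-k/2}$ as an integral operator with kernel $\int_0^\infty (D_g K_t^\nu)(g,g')\, t^{k/2-1}\, dt$. The key structural point, exactly as in $\R^n$, is that a vector field among $\mathcal{X}_j, \mathcal{Y}_j$ differentiates the weight $e^{-(a\cdot x + b\cdot y)}$ only through its abelian part, and only when it has a nonzero component along the \emph{drift field} $\sum_j (a_j \mathcal{X}_j + b_j \mathcal{Y}_j)$; so when $D$ acts on $K_t^\nu$, the worst Leibniz terms are those in which the maximal number $q$ of factors of $D$ differentiate "in the drift direction", each producing a scalar built from $\nu$ and lowering the order acting on $p_t$.

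For the positive statement ($k=1$) I would decompose the kernel of $\widetilde{\mathcal{R}}_{1,\nu}$ into a \emph{local} part (Carnot--Carath\'eodory distance $d(g,g')$ at most $1$, essentially small $t$) and a \emph{global} part. Using the automorphic dilations and rotations of $\mathbb{H}^n$ one first normalizes $|\nu| = 1$, after which $d\mu_\nu$ is \emph{locally} doubling with a constant independent of $\nu$ and the exponential factors are bounded above and below on the local part. Invoking the Gaussian bounds for $p_t$ and for its horizontal gradient $\nabla_H p_t$ on $\mathbb{H}^n$, the local kernel satisfies the size and regularity estimates of a \CZ{} kernel, so its weak-type $(1,1)$ bound follows from Coifman--Weiss theory on the space of homogeneous type formed by $\mathbb{H}^n$ with $d$ and Haar measure. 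The global part is handled directly: combining $e^{-t|\nu|^2}$ with the decay of $p_t$ and $\nabla_H p_t$ in $d(g,g')^2/t$, and using that for $k=1$ at most one derivative falls on the weight, one bounds the global kernel pointwise and checks, by a Schur-type integration argument, that it sends $L^1(d\mu_\nu)$ into $L^{1,\infty}(d\mu_\nu)$ — the point being that the exponential growth of $d\mu_\nu$ in the drift direction is beaten by the Gaussian decay together with the spectral-gap factor $e^{-t|\nu|^2}$.

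For the negative statement ($k \ge 3$) I would produce an explicit admissible $D$ of order $k$ differentiating three or more times along the drift field $\sum_j (a_j \mathcal{X}_j + b_j \mathcal{Y}_j)$, so that $q \ge 3$, together with a family of test functions $f$ — narrow bumps placed far out against the drift — and show that $\mu_\nu\{\, |\widetilde{\mathcal{R}}_{k,\nu} f| > \alpha \,\}$ cannot be bounded by any multiple of $\|f\|_{L^1(d\mu_\nu)}/\alpha$. When $D$ hits the weight $q$ times, the subordination integral in $t$ develops a factor of logarithmic type (of order $q/2 - 1 \ge 1/2$ in $\log(1/\mathrm{scale})$) which, coupled with the growth of $d\mu_\nu$, reproduces on $\mathbb{H}^n$ the obstruction that Theorem \ref{thm:LS-Riesz-higher-order-sharp-Euclidean} exhibits on $\R^n$; all that must be verified is that such a $D$ genuinely occurs among the order-$k$ Riesz transforms once $k \ge 3$.

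The principal difficulty is the global part of the positive assertion. Unlike on $\R^n$, the heat kernel $p_t$ on $\mathbb{H}^n$ has no closed-form Gaussian expression — only the Gaveau--Hulanicki representation and two-sided Gaussian estimates are available — so the delicate cancellations that the Euclidean proof reads off from explicit formulas must be recovered from heat-kernel and gradient bounds, integration by parts in $t$, and the group law, while the uniformity in $\nu$ has to be carried through the sub-Riemannian geometry. A secondary issue is the interplay of the drift direction with non-commutativity: derivatives of the weight see only the abelian parts of $\mathcal{X}_j, \mathcal{Y}_j$, but rearranging $D$ to isolate the weight-differentiations generates commutator terms pointing along the centre, and one must check that these do not spoil the count of how many derivatives act "in the drift direction".
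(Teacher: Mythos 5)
This is a cited background result, not one the paper proves: Theorem~\ref{thm:LS-Riesz-Heisenberg} is attributed entirely to Li--Sj\"ogren~\cite{Li-Sjogren-drift-Heisenberg-JFAA-2022} and is stated in the Introduction without proof, so there is no argument of the paper's own to compare your attempt against. Your sketch is nonetheless consistent in spirit with the methodology used by Li--Sj\"ogren in their Heisenberg paper and with the arguments this paper gives for the analogous twisted-Laplacian statements in Sections~\ref{sec:proof-thm-with-transference} and~\ref{sec:kernel-estimates}: conjugating by the exponential weight to read off the drifted heat kernel, a local/global split, \CZ{} theory for the local part using the locally doubling structure of $d\mu_\nu$, and pointwise kernel bounds ``at infinity'' for the global part.

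Two cautions, though. First, your negative part extrapolates the sharp $q$-count of Theorem~\ref{thm:LS-Riesz-higher-order-sharp-Euclidean} from $\R^n$ to $\mathbb{H}^n$, asserting a logarithmic factor of order $q/2-1$ and an if-and-only-if type dichotomy governed by the number of derivatives along the drift. On $\mathbb{H}^n$ that sharp characterization is \emph{not} established; Theorem~\ref{thm:LS-Riesz-Heisenberg} only asserts existence of some failing $k$-th order Riesz transform when $k \geq 3$, and the paper explicitly notes that even the second-order case is open. Your proof plan should be scoped to producing \emph{one} counterexample $D$ of order $k$, not to proving a full $q$-dichotomy. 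Second, note that for the twisted-Laplacian analogue (Theorem~\ref{thm:higher-order-not-uniform}) this paper sidesteps an explicit counterexample construction entirely by transferring the Euclidean negative result through the $\lambda \to 0$ limit supplied by Lemma~\ref{lem:Riesz-lambda-pointwise-continuity} and Fatou's lemma. No such contraction-to-$\R^n$ shortcut is available for $\mathcal{L}_\nu$ directly on $\mathbb{H}^n$, which is why the Li--Sj\"ogren Heisenberg argument must in fact build the counterexample by hand, as you propose — but that is genuinely the harder route, and your outline leaves the decisive kernel asymptotics for large $d(g,g')$ (your ``principal difficulty'') unaddressed.
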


Note that Theorem \ref{thm:LS-Riesz-Heisenberg} does not talk about the second order Riesz transforms and to the best of our knowledge it is not yet known whether they are of weak-type $(1,1)$ or not. Moreover, it is also an interesting open question to have an analogue of Theorem \ref{thm:LS-Riesz-higher-order-sharp-Euclidean} dealing with the number of derivatives in the direction of the drift vector $\nu$. 

\medskip In the present work, we study analogues of these results for the twisted Laplacians (or the special Hermite operators) $L_\lambda$ on $\C^n$ for $\lambda \in \mathbb{R} \setminus \{0\}$. These operators $L_\lambda$ are on the one hand intimately connected to the sub-Laplacian $\mathcal{L}$ on $\mathbb{H}^n$ as they are defined by the relation $  L_\lambda f(z) = e^{-i\lambda t}\, \mathcal{L}\left(f(z)e^{i\lambda t}\right)$ and on the other hand reduce  to the standard  Laplacian on $\C^n (\cong \R^{2n})$ as $ \lambda \rightarrow 0.$  Unlike the Laplacian or the sub-Laplacian, the twisted Laplacian is not dilation invariant which makes the situation a little bit different from the above cases.

\medskip We now state our main results. Consider the first order gradient vector fields $X_j (\lambda)$ and $Y_j (\lambda)$ by setting $\displaystyle X_j(\lambda) F(z) = e^{-i\lambda s} \mathcal{X}_j \left( e^{i \lambda s} F(z) \right), \, Y_j(\lambda) F(z) = e^{-i\lambda s} \mathcal{Y}_j \left( e^{i \lambda s} F(z) \right),$ so that $L_\lambda = \sum_{j=1}^{n} \left( X_j(\lambda)^2 + Y_j(\lambda)^2 \right)$. For a non-zero vector $\nu = (a, b) \in \R^n \times \R^n$, consider the twisted Laplacian with drift $\nu$, given by 
$$L_{\nu,\lambda} = L_\lambda + 2 \, \nu \cdot \nabla(\lambda) = L_\lambda + 2 \sum_{j=1}^n \left(a_j X_j(\lambda) + b_j Y_j(\lambda) \right),$$
where $\nabla (\lambda) = (X_1 (\lambda), X_2 (\lambda), \ldots, X_n (\lambda), Y_1 (\lambda), Y_2 (\lambda), \ldots, Y_n (\lambda))$. 

\medskip It can be easily verified that the operator $L_{\nu,\lambda}$ is negative-definite and essentially self-adjoint on $L^2(\C^n, d\mu_\nu)$ where $d\mu_\nu(x, y) = e^{2(a \cdot x + b \cdot y)} \, dx \, dy$. 

\medskip Consider a monomial of the type $P_k(\lambda) = \theta_1(\lambda) \theta_2(\lambda) \cdots \theta_{k}(\lambda)$ with $\theta_j(\lambda)$ coming from the set $\{ X_1(\lambda), \ldots, X_n(\lambda), Y_1(\lambda), \ldots, Y_n(\lambda) \}$ for every $1 \leq j \leq k$. Denote by $\mathcal{R}_{k, \nu, \lambda}$ the $k^{th}$-order Riesz transform $P_k (\lambda) (-L_{\nu,\lambda})^{-k/2}$. Our first result is about the $L^p$-boundedness of these Riesz transforms. 
\begin{theorem} \label{thm:Lp-Riesz-twisted-Laplacian}
For any $1<p<\infty$ and $k \geq 1$, the Riesz transforms $\mathcal{R}_{k, \nu, \lambda}$ are bounded on $L^p(\C^n, d \mu_\nu)$, uniformly in $\lambda$ and $\nu$. 
\end{theorem}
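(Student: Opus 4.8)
The plan is to reduce the boundedness of $\mathcal{R}_{k,\nu,\lambda}$ on $L^p(\C^n, d\mu_\nu)$ to a known result, exploiting the relation between the twisted Laplacian with drift and the sub-Laplacian with drift on $\mathbb{H}^n$. First I would recall the defining relation $L_\lambda F(z) = e^{-i\lambda t}\mathcal{L}(F(z)e^{i\lambda t})$ and the analogous relations for $X_j(\lambda), Y_j(\lambda)$; these imply that for a function $g$ on $\mathbb{H}^n$ of the special form $g(z,t) = F(z)e^{i\lambda t}$, one has $\mathcal{L}_\nu g(z,t) = e^{i\lambda t}(L_{\nu,\lambda}F)(z)$ and $\widetilde{P}_k g(z,t) = e^{i\lambda t}(P_k(\lambda)F)(z)$, where $\widetilde{P}_k$ is the corresponding monomial in $\mathcal{X}_j, \mathcal{Y}_j$. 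Consequently, at least formally, the Riesz transform $\widetilde{\mathcal{R}}_{k,\nu}$ on $\mathbb{H}^n$ acting on such "$\lambda$-homogeneous" functions is intertwined with $\mathcal{R}_{k,\nu,\lambda}$ on $\C^n$. The idea is then to transfer Theorem~\ref{thm:LM-Lp-Riesz-Heisenberg} along this correspondence, using that the pushforward/section map between $L^p(\mathbb{H}^n, d\mu_\nu)$ restricted to $\lambda$-homogeneous functions and $L^p(\C^n, d\mu_\nu)$ is (after a fixed normalization in the $t$-variable) an isometry, and that the heat semigroups $e^{-s\mathcal{L}_\nu}$ and $e^{-sL_{\nu,\lambda}}$ are intertwined the same way, hence so are the subordinated operators $(-\mathcal{L}_\nu)^{-k/2}$ and $(-L_{\nu,\lambda})^{-k/2}$.

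The technically cleanest way to make the transference rigorous is probably to bypass the group picture and argue directly via the heat kernel. I would write $(-L_{\nu,\lambda})^{-k/2} = c_k \int_0^\infty s^{k/2 - 1} e^{sL_{\nu,\lambda}} \, ds$ (valid since $L_{\nu,\lambda}$ is negative-definite and self-adjoint on $L^2(\C^n,d\mu_\nu)$, with a spectral gap bounded below away from zero uniformly in $\nu,\lambda$ — this uniform gap is exactly what lets us control the integral near $s=\infty$), so that
\begin{equation*}
\mathcal{R}_{k,\nu,\lambda} f = c_k \int_0^\infty s^{k/2-1} \, P_k(\lambda) e^{sL_{\nu,\lambda}} f \, ds.
\end{equation*}
One then needs pointwise bounds on the kernel of $P_k(\lambda)e^{sL_{\nu,\lambda}}$ together with its gradient, uniform in $\lambda$ and $\nu$, and a Calder\'on--Zygmund / Schur-test type argument to sum the $s$-integral. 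The heat kernel of $L_{\nu,\lambda}$ can be obtained from that of $L_\lambda$ (which is the Mehler-type kernel, explicit and Gaussian-like) by a gauge/conjugation absorbing the drift: writing $L_{\nu,\lambda} = e^{-\phi_\nu} (L_\lambda + c_\nu) e^{\phi_\nu}$ for a suitable linear $\phi_\nu$ and constant $c_\nu = -|\nu|^2$, the heat kernel of $L_{\nu,\lambda}$ is $e^{-c_\nu s} e^{-\phi_\nu(z)} p_s^\lambda(z,w) e^{\phi_\nu(w)}$, and the $e^{-c_\nu s} = e^{|\nu|^2 s}$ growth is exactly compensated by the spectral gap, keeping all estimates uniform.

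The main obstacle I anticipate is precisely tracking the uniformity in $\lambda$ (and $\nu$) through the kernel estimates: the twisted Laplacian is not dilation-invariant, so one cannot simply scale $\lambda$ away as one does with the drift vector on $\mathbb{H}^n$, and the Mehler kernel $p_s^\lambda$ degenerates as $\lambda \to 0$ into the ordinary Gaussian while for large $\lambda$ it oscillates and concentrates. I would handle this by splitting the $s$-integral at a $\lambda$-dependent threshold (e.g. $s \sim 1/\lambda$), using the ordinary-heat-kernel-like behavior of $p_s^\lambda$ for small $s$ and the rapid decay / Gaussian envelope (uniform in $\lambda$ after suitable change of variables) for large $s$, and in each regime applying the standard machinery: verify the local $L^p$ boundedness from the $L^2$ theory plus Calder\'on--Zygmund estimates on balls of radius $\lesssim 1$, and handle the "global" (large-distance) part using the explicit exponential decay of the Mehler kernel, which dominates the exponential growth of $d\mu_\nu$. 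Finally, I would note that for $k \ge 2$ one simply iterates/absorbs the extra derivatives into the gradient estimates on the heat kernel, with constants still uniform.
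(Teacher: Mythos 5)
Your opening idea (transference to the sub-Laplacian with drift on $\mathbb{H}^n$ via $\lambda$-sections) is exactly the paper's strategy, but the way you sketch it does not literally work, and your pivot to a ``cleaner'' direct kernel argument introduces genuine gaps.

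On the transference sketch: a function of the form $g(z,t)=F(z)e^{i\lambda t}$ is never in $L^p(\mathbb{H}^n,d\mu_\nu)$ unless $F\equiv 0$, because $|g|$ is constant in $t$ and the $t$-fibre has infinite measure. So there is no isometry between ``$\lambda$-homogeneous functions in $L^p(\mathbb{H}^n)$'' and $L^p(\C^n)$ to invoke. The paper circumvents this by concentrating in the $t$-Fourier variable rather than working with exact $\lambda$-homogeneous functions: for $p=2$ it factors a real $H\in L^1(\mathbb R)$ as $H=H_1H_2$ with $H_1,H_2\in L^2$, forms $f(z,t)=\phi(z)h_1(t)$, $g(z,t)=\psi(z)h_2(t)$, and identifies $\int G(\lambda)H(\lambda)\,d\lambda$ with $\langle\widetilde{\mathcal{R}}_{k,\nu}f,g\rangle$ via Plancherel; for general $p$ it takes $f_{\epsilon,\lambda_0}(z,t)=\epsilon^{1/(2p)}\phi(z)e^{-\epsilon t^2}e^{-i\lambda_0 t}$ and lets $\epsilon\to 0$. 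Crucially this limit is only justified after proving continuity of $\lambda\mapsto G(\lambda)=\langle\mathcal{R}_{k,\nu,\lambda}\phi,\psi\rangle$ (Lemma~\ref{lem:Riesz-lambda-pointwise-continuity}), a technical input you do not identify as needed.

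On the direct heat-kernel approach: two issues. First, the claimed ``spectral gap bounded below away from zero uniformly in $\nu,\lambda$'' is false as stated: the bottom of the spectrum of $-L_{\nu,\lambda}$ on $L^2(\C^n,d\mu_\nu)$ is $|\nu|^2+n|\lambda|$, which tends to $0$ as $(\lambda,\nu)\to 0$. One must first normalise $|\nu|=1$ via the scaling in Subsection~\ref{subsec:drift-change-with-scaling} (which trades $(\nu,\lambda)$ for $(\nu/|\nu|,\lambda/|\nu|^2)$), a reduction you never mention. Second, and more seriously, the ``global'' piece of the operator cannot be controlled by the Schur test you gesture at: as Lemmas~\ref{lem:V_{k_1}} and the results of Li--Sj\"ogren show, the kernel for $x_1-u_1>1$ is comparable to $\mathcal{V}_{k_1}$, which is \emph{not} $L^1\to L^1$ bounded; indeed $\int|\mathcal V_{k_1}(z,w)|\,d\mu_{e_1}(z)$ diverges. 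So ``exponential decay of the Mehler kernel dominates the exponential growth of $d\mu_\nu$'' is not the right mechanism, and interpolation from weak $(1,1)$ is unavailable since that endpoint fails for $k_1\ge 3$. What would rescue the direct route is the observation that after the change of density $f\mapsto e^{2u_1/p}f$, $\mathcal V_{k_1}$ becomes a genuine convolution operator whose kernel is $L^1(\C^n,dz)$ precisely when $1<p<\infty$ (Young's inequality then gives the bound); this is a real, non-trivial step that your proposal does not supply, and it is exactly the sort of work the paper avoids by doing transference instead.
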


Concerning the end-point estimates, the following theorem can be seen as an analogue of Theorem \ref{thm:LSW-Riesz-first-order-Euclidean} and a partial analogue of Theorem \ref{thm:LS-Riesz-higher-order-sharp-Euclidean}. For technical convenience, we take $\nu = e_1$. One can get the result for arbitrary $\nu \neq 0$ with the help of rotation and dilation which we discuss in Subsections \ref{subsec:drift-change-with-rotation} and  \ref{subsec:drift-change-with-scaling}. 
\begin{theorem} \label{thm:higher-order-Riesz-uniform-1-1-log-space}
Let $\mathcal{R}_{k, e_1, \lambda}$ be the $k^{th}$-order Riesz transforms. Let $k_j$ be the total power of $X_j(\lambda)$ and $Y_j(\lambda)$ in the monomial $P_k(\lambda)$ defining $\mathcal{R}_{k, e_1, \lambda}$, so that $\sum_{j=1}^n k_j = k.$ 
\begin{enumerate}
\item For $k_1 \in \{0, 1, 2\},$ $\mathcal{R}_{k, e_1,\lambda}$ is weak-type $(1,1)$, uniformly in $\lambda$ and $\nu$. 

\item For any $k_1 \geq 3$, there exists a constant $C_k > 0$ such that 
\begin{align} \label{ineq:thm-LlogL} 
\mu_{e_1} \{z:|\mathcal{R}_{k, e_1, \lambda} f(z)|>\alpha\} \leq C_k \int_{\mathbb{C}^n} \frac{|f|}{\alpha} \left(1+ln^+\frac{|f|}{\alpha} \right)^{\frac{k_1}{2}-1} d\mu_{e_1}, 
\end{align}
for every $\alpha>0$, $\lambda \neq 0$, and $f \in L(1+ln^+L)^{\frac{k_1}{2}-1} (\mathbb{C}^n, d\mu_{e_1})$. 

Moreover, inequality \eqref{ineq:thm-LlogL} is sharp in the following sense. There exists a monomial such that for the corresponding Riesz transform $\mathcal{R}_{k_1, e_1, \lambda}$, the inequality does not hold true uniformly in $\lambda$ for the spaces $L(1+ln^+L)^r (\mathbb{C}^n, d\mu_{e_1})$ for any $r < \frac{k_1}{2}-1$. 
\end{enumerate} 
\end{theorem}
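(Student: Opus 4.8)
The plan is to reduce the whole statement to pointwise estimates for the integral kernel of $\mathcal R_{k,e_1,\lambda}$ with respect to $d\mu_{e_1}$, and then feed those into weak-type/$L\log L$ machinery. Three inputs make the kernel explicit. First, subordination: since $-L_{e_1,\lambda}$ is bounded below away from $0$ (uniformly, once $\nu=e_1$), $(-L_{e_1,\lambda})^{-k/2}=\tfrac1{\Gamma(k/2)}\int_0^\infty t^{k/2-1}e^{tL_{e_1,\lambda}}\,dt$. Second, a conjugation: multiplication $T$ by $e^{x_1}$ is a unitary of $L^2(d\mu_{e_1})$ onto $L^2(dz)$ with $TL_{e_1,\lambda}T^{-1}=L_\lambda-1$ and $TX_1(\lambda)T^{-1}=X_1(\lambda)-1$, while $T$ commutes with every other $X_j(\lambda),Y_j(\lambda)$; hence $\mathcal R_{k,e_1,\lambda}=T^{-1}\,\widehat Q(\lambda)\,(-L_\lambda+1)^{-k/2}\,T$, where $\widehat Q(\lambda)$ is a finite sum of monomials of degree $\le k$ in the $X_j(\lambda),Y_j(\lambda)$ obtained from $P_k(\lambda)$ by replacing some occurrences of $X_1(\lambda)$ by the scalar $-1$ (the top term being $P_k(\lambda)$ itself). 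Third, $e^{tL_\lambda}$ has the explicit special Hermite (Mehler) kernel $\mathfrak p_t^\lambda(z,w)=\bigl(\tfrac{|\lambda|}{4\pi\sinh(|\lambda|t)}\bigr)^n\exp\!\bigl(-\tfrac{|\lambda|}4\coth(|\lambda|t)|z-w|^2\bigr)\Phi_\lambda(z,w)$ with $|\Phi_\lambda|\equiv1$. Together these give $\mathcal R_{k,e_1,\lambda}f(z)=\int_{\mathbb C^n}\mathcal K_\lambda(z,w)f(w)\,d\mu_{e_1}(w)$ with $\mathcal K_\lambda(z,w)=e^{-x_1-u_1}\,\tfrac1{\Gamma(k/2)}\int_0^\infty t^{k/2-1}e^{-t}\,\bigl(\widehat Q(\lambda)_z\,\mathfrak p_t^\lambda\bigr)(z,w)\,dt$, and $\widehat Q(\lambda)_z\mathfrak p_t^\lambda=\mathfrak p_t^\lambda\cdot\mathrm{Pol}_t^\lambda$ for an explicit polynomial $\mathrm{Pol}_t^\lambda$ of degree $\le k$ in $z-w$ whose coefficients are built from $|\lambda|$, $|\lambda|\coth(|\lambda|t)$ and $\Phi_\lambda$.

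Next I would split $\mathcal K_\lambda=\mathcal K^{\mathrm{loc}}_\lambda+\mathcal K^{\mathrm{glob}}_\lambda$, cutting the $t$-integral at $t\simeq\min(1,|\lambda|^{-1})$; after normalizing $|\lambda|=1$ via the dilations of Subsection~\ref{subsec:drift-change-with-scaling} this is the familiar cut at $|z-w|\simeq1$. On unit balls $d\mu_{e_1}$ is comparable to Lebesgue measure with uniformly bounded distortion, and the small-time behavior $\coth(|\lambda|t)\sim(|\lambda|t)^{-1}$ shows that $\mathcal K^{\mathrm{loc}}_\lambda$ is a standard \CZ{} kernel uniformly in $\lambda$; hence the local part is of weak type $(1,1)$ and bounded on $L^p$, $1<p<\infty$, uniformly (this also settles the local contributions to \cref{thm:Lp-Riesz-twisted-Laplacian} and, since $L(1+\ln^+L)^\beta\subset L^1$, to \cref{thm:higher-order-Riesz-uniform-1-1-log-space} for every $k_1\ge0$). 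The substance is $\mathcal K^{\mathrm{glob}}_\lambda$, where the large-time regime $\coth(|\lambda|t)\to1$, $\sinh(|\lambda|t)\sim\tfrac12e^{|\lambda|t}$ is operative; performing the $t$-integral by a Laplace-type analysis one finds that in the "bad" direction $u_1\to-\infty$ — where $\mu_{e_1}$ is exponentially small — the weight $e^{-x_1-u_1}$ absorbs all the Gaussian gain in $w$, leaving a majorant of the form $|\mathcal K^{\mathrm{glob}}_\lambda(z,w)|\lesssim e^{-(x_1+u_1)}(1+|x_1-u_1|)^{k_1/2-1}\,\mathcal G(z-w)$, uniformly in $\lambda$, where $\mathcal G$ is Gaussian in the coordinates transverse to $e_1$ and the only surviving growth is the polynomial factor of degree $k_1/2-1$ produced by the $k_1$ derivatives taken along $e_1$; the "good" direction contributes a term controlled by the same bound. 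For $k_1\in\{0,1,2\}$ that factor is bounded, and one checks directly that the resulting operator is of weak type $(1,1)$ on $L^1(\mu_{e_1})$ — the decay $e^{-(x_1+u_1)}$ being exactly balanced by the exponential growth of $\mu_{e_1}$ in the $e_1$-direction, with $\mathcal G$ supplying the transverse localization — uniformly in $\lambda$; this is part~(1). For $k_1\ge3$ the polynomial factor is unbounded, and one instead decomposes $f$ dyadically at level $\alpha$ and uses the geometric estimate $\int_S(1+|u_1|)^{k_1/2-1}\,d\mu_{e_1}\lesssim\mu_{e_1}(S)\,\bigl(1+\ln^+\tfrac1{\mu_{e_1}(S)}\bigr)^{k_1/2-1}$ for $S$ in the bad region — the source of the logarithm — together with an abstract $L\log L$ criterion of Li--Sj\"ogren type, to obtain \eqref{ineq:thm-LlogL} uniformly in $\lambda$.

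For the sharpness I would take $k=k_1$ and the monomial $P_{k_1}(\lambda)=X_1(\lambda)^{k_1}$. As $\lambda\to0$ one has $X_j(\lambda)\to\partial_{x_j}$, $Y_j(\lambda)\to\partial_{y_j}$, $L_\lambda\to\Delta$ on $\mathbb C^n\cong\mathbb R^{2n}$, and the kernels $\mathcal K_\lambda$ converge (locally uniformly, dominated by the uniform majorants above), so $\mathcal R_{k_1,e_1,\lambda}f\to\partial_{x_1}^{k_1}(-\Delta_{e_1})^{-k_1/2}f$ in measure for nice $f$; since $e_1$ is orthogonal to $\partial_{y_1}$, $\partial_{x_1}^{k_1}$ is exactly an operator $D$ with $q=k_1$ as in \cref{thm:LS-Riesz-higher-order-sharp-Euclidean}. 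If \eqref{ineq:thm-LlogL} held uniformly in $\lambda$ with exponent $r<k_1/2-1$ in place of $k_1/2-1$, then evaluating along one of Li--Sj\"ogren's counterexamples for $\Delta_{e_1}$ and letting $\lambda\to0$ would give the same bound for $\partial_{x_1}^{k_1}(-\Delta_{e_1})^{-k_1/2}$ on $L(1+\ln^+L)^r$, contradicting the sharpness half of \cref{thm:LS-Riesz-higher-order-sharp-Euclidean}. The main obstacle I expect is the uniform-in-$\lambda$ analysis of $\mathcal K^{\mathrm{glob}}_\lambda$ across the regimes $|\lambda|\to0$ (a controlled perturbation of the Euclidean drift Laplacian) and $|\lambda|\to\infty$ (where, after rescaling, the operator degenerates to the driftless twisted Laplacian, which is \CZ): one must extract the precise polynomial growth of order exactly $k_1/2-1$ from the interplay of the subordination integral, the weight $e^{-x_1-u_1}$ and the $k_1$ drift-direction derivatives, identify the non-decaying direction correctly, and verify the hypotheses of the $L\log L$ criterion with constants independent of $\lambda$ (and, after the rotation/scaling reductions of Subsections~\ref{subsec:drift-change-with-rotation}--\ref{subsec:drift-change-with-scaling}, of $\nu$).
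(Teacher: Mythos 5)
Your overall strategy matches the paper's: a local/global split of the kernel, Calder\'on--Zygmund estimates for the local piece, a $k_1$-dependent majorant for the global piece reduced to Li--Sj\"ogren's framework, and the Fatou/transference argument for sharpness via the $\lambda\to0$ limit to $\partial_{x_1}^{k_1}(-\Delta_{e_1})^{-k_1/2}$. The conjugation by $e^{x_1}$ you write down is correct and is what underlies formula \eqref{eq:Heat-ker-twisted-lap-drift}. However, there is a quantitative imprecision in your global majorant that, taken at face value, would prevent the endpoint estimates from going through. You claim $|\mathcal K^{\mathrm{glob}}_\lambda(z,w)|\lesssim e^{-(x_1+u_1)}(1+|x_1-u_1|)^{k_1/2-1}\,\mathcal G(z-w)$, whereas the bound the paper actually establishes (Lemma \ref{lem:V_{k_1}}) is
\[
|\mathcal R_{k,e_1,\lambda}(z,w)|\lesssim \mathcal V_{k_1}(z,w)=e^{-2x_1}\,|z-w|^{\frac{k_1-2n-1}{2}}\exp\Bigl(-\tfrac14\tfrac{|z'-w'|^2}{|z-w|}\Bigr)\chi_{\{x_1-u_1>1\}}.
\]
In the relevant region $|z-w|\sim x_1-u_1$, so the exponent of the polynomial factor is $\tfrac{k_1-2n-1}{2}=\tfrac{k_1}{2}-1-\tfrac{2n-1}{2}$, i.e.\ your exponent $\tfrac{k_1}{2}-1$ is missing the dimensional correction $-(2n-1)/2$ coming from the $\bigl(\tfrac{\lambda}{\sinh\lambda t}\bigr)^n$ prefactor and the Laplace asymptotics $B_s(a)\sim a^{s-1/2}e^{-a}$. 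Your bound is therefore larger by a factor of order $|z-w|^{n-1/2}$, which would defeat both the weak-type $(1,1)$ claim for $k_1\le2$ and the $L(1+\ln^+L)^{k_1/2-1}$ bound for $k_1\ge3$; with the corrected exponent the paper can simply invoke Li--Sj\"ogren's Propositions~3.1 and~3.4 on $\mathcal V_j$ rather than reproving the weak-type/$L\log L$ estimates ``directly'' as you propose. Two further, minor, differences: the paper additionally peels off the region $x_1-u_1\le1$ inside the global part (showing that piece is strong type $(1,1)$ uniformly), whereas your description treats the infinity region monolithically; and for the sharpness argument the paper's Lemma~\ref{lem:Riesz-lambda-pointwise-continuity} gives the pointwise limit $\mathcal R_{k_1,e_1,\lambda}\phi(z)\to\partial_{x_1}^{k_1}(-\Delta_{e_1})^{-k_1/2}\phi(z)$ rigorously, which you invoke informally but correctly.
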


For  Riesz transforms of order three or more, we have the following results. 
\begin{theorem} \label{thm:higher-order-not-uniform}
Let $P_k(\lambda) = (\nu \cdot \nabla (\lambda))^k$. If $k \geq 3$, then the weak-type $(1,1)$ boundedness of the Riesz transform $P_k (\lambda) (-L_{\nu,\lambda})^{-k/2}$ is not uniform in $\lambda$ and $\nu$. 
\end{theorem}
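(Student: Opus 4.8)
The plan is to argue by contradiction, exploiting the fact recalled in the introduction that as $\lambda \to 0$ the twisted Laplacian degenerates to the Euclidean Laplacian on $\C^n \cong \R^{2n}$. Since non-uniformity of the weak-type $(1,1)$ bounds in the pair $(\lambda,\nu)$ is implied by non-uniformity in $\lambda$ for a single fixed drift vector, we fix $\nu = e_1$, so that $P_k(\lambda) = (e_1\cdot\nabla(\lambda))^k = X_1(\lambda)^k$. (A dilation $F \mapsto F(r\,\cdot)$ intertwines $\mathcal{R}_{k,\nu,\lambda}$ with $\mathcal{R}_{k,\nu/r,\lambda/r^2}$ and acts isometrically on the relevant weak-type spaces, so the weak-type norm of $\mathcal{R}_{k,e_1,\lambda}$ depends on $\lambda$ only through $|\lambda|^{-1/2}$ and $\sign(\lambda)$; equivalently, non-uniformity in $\lambda$ amounts to the blow-up of the weak-type norm of $\mathcal{R}_{k,\mu e_1,1}$ as $\mu \to \infty$, the regime where the twisted scale $|\lambda|^{-1/2}$ dwarfs the drift scale and the geometry looks Euclidean. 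One may alternatively use the rotation and scaling reductions of Subsections~\ref{subsec:drift-change-with-rotation} and~\ref{subsec:drift-change-with-scaling}.) Assume, for contradiction, that $C := \sup_{\lambda \neq 0}\|\mathcal{R}_{k,e_1,\lambda}\|_{L^1(\mu_{e_1}) \to L^{1,\infty}(\mu_{e_1})} < \infty$.

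The crux is the convergence statement: for every $f \in C_c^\infty(\C^n)$ there is a sequence $\lambda_j \to 0$ along which $\mathcal{R}_{k,e_1,\lambda_j}f \to \partial_{x_1}^k(-\Delta_{e_1})^{-k/2}f$ $\mu_{e_1}$-almost everywhere, where $\Delta_{e_1} = \Delta + 2\partial_{x_1}$ on $\R^{2n}$. From $X_j(\lambda)F = \partial_{x_j}F + \tfrac{i\lambda}{2}y_jF$ and $Y_j(\lambda)F = \partial_{y_j}F - \tfrac{i\lambda}{2}x_jF$ one has $L_\lambda = \Delta + \tfrac{\lambda^2}{4}|z|^2 - i\lambda\sum_j(y_j\partial_{x_j} - x_j\partial_{y_j})$, so on the common core $C_c^\infty(\C^n)$ both $L_{e_1,\lambda}\phi \to \Delta_{e_1}\phi$ and $P_k(\lambda)\phi = X_1(\lambda)^k\phi \to \partial_{x_1}^k\phi$ in $L^2(\mu_{e_1})$ as $\lambda \to 0$; by essential self-adjointness this gives strong resolvent convergence $L_{e_1,\lambda} \to \Delta_{e_1}$. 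There is moreover a uniform spectral gap: the ground-state conjugation $g \mapsto e^{x_1}g$ intertwines $-L_{e_1,\lambda}$ on $L^2(\mu_{e_1})$ with $-L_\lambda + 1$ on $L^2(\C^n,dz)$, whose spectrum is $\{(2m+n)|\lambda| + 1\}_{m \ge 0} \subseteq [1,\infty)$, and similarly $-\Delta_{e_1} \ge 1$. As $t \mapsto t^{-k/2}$ is bounded and continuous on $[1,\infty)$, continuity of the functional calculus under strong resolvent limits yields $(-L_{e_1,\lambda})^{-k/2} \to (-\Delta_{e_1})^{-k/2}$ strongly on $L^2(\mu_{e_1})$. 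To bring in the order-$k$ operator $P_k(\lambda)$ one may then either (i) combine this with interior elliptic estimates for the operators $L_{e_1,\lambda}$, which are uniformly elliptic with coefficients bounded, uniformly in $\lambda \in (0,1]$, on compact sets, to promote the convergence of $(-L_{e_1,\lambda})^{-k/2}f$ to local $C^\infty$ convergence and then apply $P_k(\lambda) \to \partial_{x_1}^k$, or (ii) use the subordination identity
\[
\mathcal{R}_{k,e_1,\lambda}f(z) = \frac{1}{\Gamma(k/2)}\int_0^\infty t^{\frac{k}{2}-1}\,\big(P_k(\lambda)\big)_z\big(e^{tL_{e_1,\lambda}}f\big)(z)\,dt,
\]
where the heat kernel $h^{e_1,\lambda}_t$ of $e^{tL_{e_1,\lambda}}$ is given by an explicit Mehler-type formula; its contraction ($\lambda/\sinh(\lambda t) \to 1/t$, $\lambda\coth(\lambda t) \to 1/t$, vanishing twisting phase) identifies the pointwise $\lambda \to 0$ limit of the integrand with the corresponding quantity for $e^{t\Delta_{e_1}}$, and one dominates the $t$-integral uniformly in small $\lambda$ --- off-diagonal Gaussian decay for $t \to 0$ away from the diagonal, the $\CZ$ cancellation of $\int_0^\varepsilon t^{k/2-1}(P_k(\lambda))_z h^{e_1,\lambda}_t\,dt$ near it, and the exponential decay of the semigroup from the spectral gap for $t \to \infty$ --- so that dominated convergence applies. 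This convergence step is the main obstacle; once it is in hand, the rest is soft.

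Granting it, fix $f \in C_c^\infty(\C^n)$, $\alpha > 0$, $0 < \beta < \alpha$, set $g := \partial_{x_1}^k(-\Delta_{e_1})^{-k/2}f$ and $A_j := \{|\mathcal{R}_{k,e_1,\lambda_j}f| > \beta\}$. Off a $\mu_{e_1}$-null set, $\{|g| > \alpha\} \subseteq \liminf_j A_j$, so the elementary inequality $\mu_{e_1}(\liminf_j A_j) \le \liminf_j\mu_{e_1}(A_j)$ gives
\[
\mu_{e_1}\{|g| > \alpha\} \le \liminf_{j \to \infty}\mu_{e_1}(A_j) \le \frac{C}{\beta}\,\|f\|_{L^1(\mu_{e_1})};
\]
letting $\beta \uparrow \alpha$ and using the density of $C_c^\infty(\C^n)$ in $L^1(\mu_{e_1})$, we conclude that $\partial_{x_1}^k(-\Delta_{e_1})^{-k/2}$ is of weak-type $(1,1)$ on $\R^{2n}$ with respect to $d\mu_{e_1} = e^{2x_1}\,dx\,dy$, with constant at most $C$. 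But on $\R^{2n}$ the operator $D = \partial_{x_1}^k$ is a constant-coefficient homogeneous differential operator of degree $k$ all of whose differentiation is in the direction $e_1$, so in the notation of Theorem~\ref{thm:LS-Riesz-higher-order-sharp-Euclidean} (applied with drift $e_1$) we have $q = k$; since $k \ge 3$, $q \ge 3$, and that theorem asserts precisely that $D(-\Delta_{e_1})^{-k/2}$ is \emph{not} of weak-type $(1,1)$ with respect to $d\mu_{e_1}$ --- a contradiction. Hence $\sup_{\lambda \neq 0}\|\mathcal{R}_{k,e_1,\lambda}\|_{L^1(\mu_{e_1}) \to L^{1,\infty}(\mu_{e_1})} = \infty$, and for $k \ge 3$ the weak-type $(1,1)$ boundedness of $P_k(\lambda)(-L_{\nu,\lambda})^{-k/2}$ cannot be uniform in $\lambda$ and $\nu$.
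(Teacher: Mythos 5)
Your proposal follows the same broad contradiction scheme the paper uses in Subsection~\ref{subsec:proof-two-basic-thms-2}: reduce to $\nu=e_1$, assume a uniform weak-type bound, pass to the $\lambda\to0$ limit, apply Fatou, and contradict Li--Sj\"ogren's Theorem~\ref{thm:LS-Riesz-higher-order-sharp-Euclidean} with $q=k\geq3$. The Fatou step and the final contradiction are handled correctly, and the reduction to $\nu=e_1$ is sound.

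The difference is in how the key convergence $\mathcal{R}_{k,e_1,\lambda}\phi\to\partial_{x_1}^k(-\Delta_{e_1})^{-k/2}\phi$ is justified. The paper proves this \emph{everywhere-pointwise} in Lemma~\ref{lem:Riesz-lambda-pointwise-continuity}~(iii), by writing $\mathcal{R}_{k,e_1,\lambda}\phi$ explicitly through the Euclidean Fourier transform and the Mehler-type heat kernel, and then dominating the resulting $t$--$\zeta$ integral uniformly in $\lambda$. Your route~(ii) (subordination plus dominated convergence from the explicit heat kernel) is essentially the paper's computation, and if carried out in detail would recover the lemma. Your route~(i), by contrast, only gets you as far as $(-L_{e_1,\lambda})^{-k/2}\phi\to(-\Delta_{e_1})^{-k/2}\phi$ in $L^2(\mu_{e_1})$; converting that into pointwise convergence \emph{after applying the order-$k$ operator} $P_k(\lambda)$ is exactly the hard part, and the elliptic-bootstrap sketch does not obviously go through for the fractional power $(-L_{e_1,\lambda})^{-k/2}$ (which is not the solution operator of a local elliptic PDE, so interior Schauder/$H^s$ estimates do not apply directly to $u_\lambda=(-L_{e_1,\lambda})^{-k/2}\phi$). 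You acknowledge this yourself: ``This convergence step is the main obstacle.'' So the proposal is correct in outline and in final reduction, but the only route you offer that plausibly closes the gap (ii) coincides with the paper's Lemma~\ref{lem:Riesz-lambda-pointwise-continuity}, and the ostensibly softer functional-calculus route (i) is left genuinely incomplete. Two small inessential slips: the displayed formula for $L_\lambda$ has both the sign of $\tfrac{\lambda^2}{4}|z|^2$ and of the rotation term reversed relative to \eqref{formula:twisted-laplacian}; and the spectral-gap remark gives $\sigma(-L_{e_1,\lambda})\subseteq[1,\infty)$ after conjugation by $e^{x_1}$, which is correct and is the cleanest way to see that $t\mapsto t^{-k/2}$ can be treated as a bounded continuous function of the operator, but you should make explicit that you replace it by a bounded continuous extension to all of $\mathbb R$ before invoking the Reed--Simon criterion.
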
 

We remark that $ \left\| \mathcal{R}_{k, \nu, \lambda} \right\|_{L^{1,\infty} (\mathbb{C}^n, \, d\mu_\nu)} = \left\| \mathcal{R}_{k, \nu', \lambda/|\nu|^2} \right\|_{L^{1,\infty} \left(\mathbb{C}^n, \, d\mu_{ \nu'} \right)}$ where $\nu' = |\nu|^{-1} \nu$ (see Lemma \ref{lem:Riesz-transform-drift-in-scale-transformation}). Keeping $\lambda/|\nu|^2$ away from $0$ and exploiting the spectral properties of $L_{\lambda/|\nu|^2}$, we do have the boundedness of Riesz transforms of arbitrarily high order. 
\begin{theorem} \label{thm:higher-order-bdd-drift}
For any $M > 0$ and $k \geq 3$, there exists a constant $C_{M, k} > 0$ such that for any $\lambda$ and $\nu$ satisfying  $\frac{|\lambda|}{|\nu|^2} \geq M$, we have 
$$ \alpha \, \mu_\nu \left\{ z : |\mathcal{R}_{k,\nu,\lambda} f(z)|>\alpha \right\} \leq C_{M, k} \int_{\mathbb{C}^n} |f| \, d\mu_\nu, $$ 
for all $\alpha > 0$. 
\end{theorem}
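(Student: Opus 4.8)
The plan is to exploit the spectral gap of the twisted Laplacian $L_{\lambda}$, which becomes large once $|\lambda|/|\nu|^2$ is bounded below, to reduce the problem to a Calder\'on--Zygmund estimate with an exponential gain. By the rescaling identity recorded just before the statement, $\| \mathcal{R}_{k, \nu, \lambda} \|_{L^{1,\infty}(\mathbb{C}^n, d\mu_\nu)} = \| \mathcal{R}_{k, \nu', \lambda/|\nu|^2} \|_{L^{1,\infty}(\mathbb{C}^n, d\mu_{\nu'})}$ with $\nu' = \nu/|\nu|$, so without loss of generality we may assume $|\nu| = 1$ and $|\lambda| \geq M$. Writing $\lambda$ for what was $\lambda/|\nu|^2$, the first step is to recall that $-L_\lambda$ has spectrum $\{ (2q+n)|\lambda| : q \in \mathbb{Z}_{\geq 0} \}$, so $-L_\lambda \geq n|\lambda| \geq nM$ on $L^2(\mathbb{C}^n, dx\,dy)$. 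A gauge transformation conjugating the measure $d\mu_\nu$ back to Lebesgue measure turns $-L_{\nu,\lambda}$ into $-L_\lambda - |\nu|^2 = -L_\lambda - 1$ (the drift squared-length shows up as a constant shift, using $|\nu|=1$); hence the shifted operator $-L_{\nu,\lambda}$ has spectrum bounded below by $n|\lambda| - 1 \geq nM - 1$, which we take positive by enlarging $M$ if necessary. The upshot is a genuine strictly positive bottom of the spectrum with a quantitative lower bound depending only on $M$.

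The second step is to write the Riesz transform through the heat semigroup. Using the subordination/functional-calculus formula
\begin{equation*}
(-L_{\nu,\lambda})^{-k/2} = \frac{1}{\Gamma(k/2)} \int_0^\infty t^{k/2 - 1} e^{t L_{\nu,\lambda}} \, dt,
\end{equation*}
we have $\mathcal{R}_{k,\nu,\lambda} = \frac{1}{\Gamma(k/2)} \int_0^\infty t^{k/2-1} P_k(\lambda) e^{t L_{\nu,\lambda}} \, dt$. For small $t$, say $t \leq 1$, the kernel of $P_k(\lambda) e^{tL_{\nu,\lambda}}$ behaves like a standard Calder\'on--Zygmund object (this is precisely the regime already analyzed in the proofs of Theorems \ref{thm:Lp-Riesz-twisted-Laplacian} and \ref{thm:higher-order-Riesz-uniform-1-1-log-space}, where the short-time part is shown to be weak-type $(1,1)$ with uniform bounds); so that contribution is harmless and contributes the $C_{M,k}$. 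For large $t$, the spectral lower bound gives $\| e^{t L_{\nu,\lambda}} \|_{L^2 \to L^2} \leq e^{-(nM-1) t}$, and by the explicit Mehler-type kernel of the twisted heat semigroup together with the derivative bounds on $P_k(\lambda)$, one gets an $L^1(d\mu_\nu) \to L^1(d\mu_\nu)$ bound for $P_k(\lambda) e^{t L_{\nu,\lambda}}$ of the form $C t^{-k/2} e^{c t}$ with $c$ controllable; the decisive point is that for $|\lambda| \geq M$ large the exponential decay $e^{-(n|\lambda|-1)t}$ dominates, so $\int_1^\infty t^{k/2-1} \cdot C t^{-k/2} e^{-(nM-1)t} \, dt < \infty$, yielding an $L^1 \to L^1$ (hence a fortiori $L^1 \to L^{1,\infty}$) bound on the long-time part. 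Adding the two pieces gives the claimed weak-type $(1,1)$ estimate with constant $C_{M,k}$.

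The main obstacle is controlling the long-time kernel in $L^1(d\mu_\nu)$ uniformly: the measure $d\mu_\nu$ grows exponentially, so one must check that the Gaussian-type decay in the twisted heat kernel (after the gauge conjugation, which introduces a factor $e^{2\nu\cdot(\text{something})}$) genuinely beats the weight $e^{2\nu \cdot(x,y)}$ at spatial infinity, and that the constants in this comparison depend only on $M$ and $k$ and not on the precise value of $\lambda$. Concretely, one needs an estimate of the shape $\int_{\mathbb{C}^n} |p_t^{\nu,\lambda}(z,w)| \, e^{2\nu\cdot w}\, dw \leq C(M,k)\, t^{-k/2} e^{-(n|\lambda|-1)t}$ for $t \geq 1$, integrated in $w$, uniformly in $z$ and in $\lambda$ with $|\lambda| \geq M$; this follows from the explicit special Hermite heat kernel but must be done carefully, splitting according to whether $|z - w|$ is large or small compared to $\sqrt{t}$. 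Once this kernel bound is in hand, the theorem follows by the elementary argument above; the short-time part requires no new work beyond what is already established in the paper.
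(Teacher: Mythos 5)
Your proposal takes a genuinely different route from the paper. The paper splits the kernel \emph{spatially}: a local piece supported on $\{|z-w|<2\}$ handled by the Calder\'on--Zygmund estimates of Lemma \ref{lem:ker-estimates}, and an at-infinity piece supported on $\{|z-w|>1\}$ for which Lemma \ref{lem:ker-estimate-global} gives the pointwise bound $|\mathcal{R}_{k,e_1,\lambda}(z,w)|\lesssim e^{-2x_1}e^{|z-w|}e^{-|\lambda||z-w|^2/8}$; when $|\lambda|\geq M$ the Gaussian factor beats the exponential growth of $d\mu_{e_1}$, so the at-infinity part is directly of strong-type $(1,1)$. You instead split \emph{temporally}, isolating $t\leq 1$ as a CZ piece and using the spectral gap $-L_\lambda\geq n|\lambda|$ to get exponential decay of the $t>1$ contribution. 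This is a more operator-theoretic reading of the same phenomenon: the exponential time decay $e^{-n|\lambda|t}$ in the semigroup is exactly what, after carrying out the subordination integral, produces the Gaussian spatial decay the paper exploits. Both approaches must eventually return to the explicit Mehler-type kernel \eqref{def:heat-kernel-twisted-laplacian} to get $L^1(d\mu_\nu)$-bounds (the $L^2$ spectral bound alone does not suffice because $d\mu_\nu$ grows exponentially, as you correctly flag at the end), so neither is strictly more elementary; but your framing makes the role of the spectral gap more transparent, while the paper's is shorter since the needed kernel estimate (Lemma \ref{lem:ker-estimate-global}) is already in place.

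Two slips worth correcting. First, the sign in your gauge computation: with $h(x,y)=e^{\nu\cdot(x,y)}$, one has $L_\lambda h = |\nu|^2 h$ (in the sense inherited from $\mathcal L h=|\nu|^2 h$ on $\mathbb H^n$), so $L_{\nu,\lambda}=h^{-1}\circ(L_\lambda-|\nu|^2)\circ h$ and hence $-L_{\nu,\lambda}$ is unitarily conjugate to $-L_\lambda+|\nu|^2$, not $-L_\lambda-|\nu|^2$; this is also visible from the factor $e^{-|\nu|^2 t}$ in \eqref{eq:Heat-ker-twisted-lap-drift}. The bottom of the spectrum is therefore $n|\lambda|+|\nu|^2$, which is strictly positive for every $\lambda\neq 0$. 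This matters because your proposed workaround of ``enlarging $M$'' is not available: the theorem is claimed for every $M>0$, and decreasing $M$ only enlarges the set $\{|\lambda|/|\nu|^2\geq M\}$, so a proof valid only for large $M$ does not imply the statement for small $M$. With the correct sign this issue evaporates. Second, your short-time piece is not literally ``the regime already analyzed'' in the paper, because the paper truncates in $|z-w|$ rather than in $t$; you should verify separately that the kernel $\int_0^1 t^{k/2-1}P_k(\lambda)p^{(\nu)}_{t,\lambda}(z,w)\,dt$ has Gaussian decay $e^{-c|z-w|^2}$ for $|z-w|\gtrsim 1$ (it does, since $t\leq 1$ forces $e^{-|z-w|^2/(4t)}\leq e^{-|z-w|^2/4}$), so that the part of this kernel off the diagonal is $L^1(d\mu_{e_1})$-integrable; the near-diagonal CZ estimates then follow as in Lemma \ref{lem:ker-estimates}. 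With these repairs the proposal is a valid alternative proof.
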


Before moving on, let us recall that the Laplacian (or the sub-Laplacian) with drift satisfies the small-time Gaussian-type heat kernel bounds in a number of cases (of Lie groups). But the measures on the associated spaces are known to have exponential growth and hence such spaces are not of homogeneous type in the sense of Coifman--Weiss \cite{Coifman-Weiss-book-1971}. On such spaces, the analysis of singular integral operators, such as the Riesz transforms and spectral multipliers is very delicate. In the past two decades there has been a lot of interest in this direction and thus the literature is ever growing. We refer the interested readers to \cite{Alexopoulos-drift-Lie-groups-Memoirs-AMS-2002, Lohoue-Mustapha-drift-Euclidean-Trans-AMS-2004, Hebisch-Mauceri-Meda-spectral-multipliers-drift-Lie-group-Math-Z-2005, Martini-Ottazzi-Vallarino-spectral-multipliers-drift-Lie-group-Revista-2019} and various references therein. 

\medskip \noindent \textbf{Organisation of the paper:} This article is organised as follows. We collect all the preliminary material in Section \ref{sec:preliminaries-and-reductions}. 
Theorems \ref{thm:Lp-Riesz-twisted-Laplacian} and \ref{thm:higher-order-not-uniform} are proved in Section \ref{sec:proof-thm-with-transference}. We begin Section \ref{sec:kernel-estimates} with estimating the kernels of the Riesz transforms which are then used in the proofs of Theorems \ref{thm:higher-order-Riesz-uniform-1-1-log-space} and \ref{thm:higher-order-bdd-drift}. 


\section{Preliminaries and basic reductions} \label{sec:preliminaries-and-reductions}


\subsection{Twisted Laplacian on \texorpdfstring{$\C^n$}{}} \label{subsec:basic-setup-definitions}
We recall the basic results on the sub-Laplacian $\mathcal{L}$ on $\mathbb{H}^n$ and the twisted Laplacian $L_\lambda$ on $\C^n$ which we require in this work. For more details we refer to the monograph \cite{Thangavelu-uncertainty-principle-book}. 

\medskip Consider the Heisenberg group $\mathbb{H}^n = \C^n \times \R$ together with the group law
$$ (z,s) (w,s') = \left( z+w, s + s' + \frac{1}{2} \Im (z \cdot \bar{w}) \right). $$
It is well known that $\mathbb{H}^n$ is a unimodular group with the Lebesgue measure $dz \, ds$ of $\C^n \times \R$ being its Haar measure. The convolution of two functions  $f_1, f_2 \in L^1(\mathbb{H}^{n})$ is defined by
$$ f_1 \ast f_2 (z,s) = \int_{\mathbb{H}^{n}} f_1 \left((z,s) (w,s')^{-1}\right) f_2(w,s') \, dw \, ds'.$$ 

\medskip For any $\lambda \in \R$ we denote by $f^\lambda$ the inverse Euclidean Fourier transform (up to a constant) of $f$ in the last variable at $\lambda/(2 \pi)$, that is, $ f^\lambda(z) = \int_{\R} f(z,s) e^{i\lambda s} \, ds.$ It is then easy to verify that $ (f_1 \ast f_2)^\lambda = f_1^\lambda \ast_\lambda f_2^\lambda$ where the $\lambda$-twisted convolution of two functions $F_1, F_2 \in L^1(\C^n)$ is given by 
\begin{align*} 
(F_1 \ast_\lambda F_2)(z) = \int_{\C^n} F_1 (z-w) \, F_2 (w) \, e^{i \frac{\lambda}{2} \Im (z \cdot \bar{w})} \, dw. 
\end{align*} 
Note that when $\lambda = 0$, the $\lambda$-twisted convolution reduces to the standard Euclidean convolution on $\C^n (\cong \R^{2n})$. 

\medskip The sub-Laplacian $\mathcal{L}$ on $\mathbb{H}^{n}$ is defined by $ \mathcal{L} = \sum_{j=1}^n (\mathcal{X}_j^2 + \mathcal{Y}_j^2),$ where $\mathcal{X}_j$ and $\mathcal{Y}_j $ are the left invariant vector fields on $ \mathbb{H}^n$ which are given explicitly by 
$$ \mathcal{X}_j = \frac{\partial}{\partial x_j} +\frac{1}{2} y_j \frac{\partial}{\partial s} \quad \text{and} \quad \mathcal{Y}_j = \frac{\partial}{\partial y_j} - \frac{1}{2} x_j \frac{\partial}{\partial s}.$$
Let $q_t$ denote the integral kernel of the heat semigroup $\left( e^{t \mathcal{L}} \right)_{t>0}$ corresponding to the sub-Laplacian of the Heisenberg group $\mathbb{H}^n$ so that $e^{t \mathcal{L}} f = f \ast q_t.$ It is well known that 
\begin{align} \label{def:heat-kernel-twisted-laplacian}
q_t^\lambda (z) = (4\pi)^{-n} \left( \frac{\lambda}{ \sinh (\lambda t)}\right)^n e^{-\frac{1}{4}\lambda \coth (\lambda t) \, |z|^2}. 
\end{align}

\medskip The sub-Laplacian $ \mathcal{L} $ gives rise to a family of operators $ L_\lambda$ on $\C^n$, indexed by non-zero reals $ \lambda,$ which are called the twisted Laplacians (or the special Hermite operators). Given a smooth function $F$ on $\C^n, L_\lambda F $ is defined by the relation
$$ L_\lambda F(z) = e^{-i\lambda s} \mathcal{L} \left( e^{i \lambda s} F(z) \right).$$ 
Note also that $ (\mathcal{L}f)^\lambda(z) = L_\lambda f^\lambda(z),$ from which it follows that $L_\lambda$ is left invariant with respect to the $\lambda$-twisted convolution, that is, $L_\lambda (F_1 \ast_\lambda F_2) = (L_\lambda F_1) \ast_\lambda F_2.$ 

\medskip One defines the associated first order gradient vector fields $X_j (\lambda)$ and $Y_j (\lambda)$ by  setting
$$ X_j(\lambda) F(z) = e^{-i\lambda s} \mathcal{X}_j \left( e^{i \lambda s} F(z) \right), \quad \quad Y_j(\lambda) F(z) = e^{-i\lambda s} \mathcal{Y}_j \left( e^{i \lambda s} F(z) \right).$$ 
These are explicitly given by the expressions 
$$ X_j (\lambda) = \frac{\partial}{\partial x_j} + \frac{i \lambda}{2} y_j \quad , \quad Y_j (\lambda) = \frac{\partial}{\partial y_j} - \frac{i \lambda}{2} x_j,$$ 
and the operator $L_\lambda$ takes the form 
\begin{align} \label{formula:twisted-laplacian}
L_\lambda = \sum_{j=1}^{n} \left( X_j(\lambda)^2 + Y_j(\lambda)^2 \right) = \Delta_{z} - \frac{\lambda^2}{4}|z|^2 - i \lambda N.
\end{align} 
Here $\Delta_{z}$ is the standard Laplacian $\sum_{j=1}^{n} \left( \frac{\partial^2}{\partial x_j^2} + \frac{\partial^2}{\partial y_j^2} \right)$ on $\C^n$ and $ N = \sum_{j=1}^{n} \left( x_j \frac{\partial}{\partial y_j} - y_j \frac{\partial}{\partial x_j} \right)$. Note again that as $\lambda \to 0$, the twisted Laplacian $L_\lambda$ reduces to $\Delta_z$. 

\medskip For the diffusion semigroup $\left( e^{t \, L_\lambda} \right)_{t>0}$, the associated heat kernel $p_{t,\lambda}$ satisfies the relation $p_{t,\lambda} = q_t^\lambda$, with $q_t^\lambda$ given by \eqref{def:heat-kernel-twisted-laplacian}. It follows that $e^{t \, L_\lambda} F = F \ast_\lambda p_{t,\lambda}.$ 


\subsection{Twisted Laplacian on \texorpdfstring{$\C^n$}{} with drift} \label{subsec:setup-definitions-with-drift}
On a suitable weighted manifold $(M, g, \mu)$, suppose we have a smooth positive function $h$  such that $\Delta_\mu h + \alpha h = 0$ for some real constant $\alpha$. Then for the measure $d\tilde{\mu} = h^2 d\mu$ on $M$, one can obtain the heat semigroup and the heat kernel for the Laplacian with drift $\Delta_{\tilde{\mu}} = \frac{1}{h} \circ (\Delta_\mu + \alpha) \circ h$ on $(M, g, \tilde{\mu})$, with the help of that for the Laplacian $\Delta_\mu$ on $(M, g, \mu)$ (see \S 9.2.4 in \cite{Alex-Heat-ker-analysis-on-manifols}). The operator $\Delta_{\tilde{\mu}}$ is self-adjoint with respect to the measure $d\tilde{\mu}$. 

\medskip For the sub-Laplacian $\mathcal{L}$ on the Heisenberg group $\mathbb{H}^n$, if one takes $h(x,y,t) = e^{a \cdot x + b \cdot y} $ for a fixed non-zero vector $\nu = (a, b) \in \R^n \times \R^n$, then it is easy to see that the associated sub-Laplacian with drift $\mathcal{L}_\nu$ is given by  
\begin{align}
\mathcal{L}_\nu = \mathcal{L} + 2 \sum_{j = 1}^n \left( a_j \mathcal{X}_j + b_j \mathcal{Y}_j \right). 
\end{align}
With $q_t$ denoting the integral kernel of the heat semigroup $\left( e^{t \mathcal{L}} \right)_{t>0}$ as in \eqref{def:heat-kernel-twisted-laplacian}, the integral kernel $q_t^{(\nu)}$ of the heat semigroup $\left( e^{t \mathcal{L}_\nu} \right)_{t>0}$ takes the form 
$$ q_t^{(\nu)}(g,g^\prime)= e^{-t |\nu|^2} \, e^{-a \cdot (x+u) - b \cdot (y+v)} \, q_t((g^\prime)^{-1}g) $$
for $g = (z, s) = (x, y, s), \, g' = (w, s') = (u, v, s') \in \mathbb{H}^n$. 

\medskip Let $Q_k$ stand for a homogeneous polynomial of degree $k \geq 1$ in $\mathcal{X}_j$ and $\mathcal{Y}_j, \, 1 \leq j \leq n$. More precisely, $Q_k$ is a finite linear combination of monomials of the type $\theta_1 \theta_2 \cdots \theta_{k}$ with $\theta_j \in \{\mathcal{X}_1, \ldots, \mathcal{X}_n, \mathcal{Y}_1, \ldots , \mathcal{Y}_n\}$ for every $1 \leq j \leq k$. We associate with it the $k^{th}$-order Riesz transform $\widetilde{\mathcal{R}}_{Q_k, \nu}$ defined by 
\begin{align} \label{def:Riesz-transform-gen-form-Heisenberg} 
\widetilde{\mathcal{R}}_{Q_k, \nu} = Q_k (-\mathcal{L}_{\nu})^{-k/2}.
\end{align}
In particular, with $Q_k$ being any monomial of the above type, we use the single notation $\widetilde{\mathcal{R}}_{k, \nu} $ for the Riesz transform. 

\medskip We utilise the relation between the sub-Laplacian $\mathcal{L}$ and the twisted Laplacian $L_\lambda$ for defining the twisted Laplacian with  drift. To be more precise, we define the twisted Laplacian $L_{\nu,\lambda}$ with drift $\nu$ by 
\begin{align} \label{eq:relation-drift-operators}
L_{\nu,\lambda} F(z) = e^{-i \lambda s} \mathcal{L}_\nu (e^{i \lambda s} F(z)). 
\end{align}
With $\nabla (\lambda) = (X_1 (\lambda), X_2 (\lambda), \ldots, X_n (\lambda), Y_1 (\lambda), Y_2 (\lambda), \ldots, Y_n (\lambda))$, it is straightforward to verify that 
\begin{align} \label{special-Hermite-operator-with-drift}
L_{\nu,\lambda} = L_\lambda + 2 \, \nu \cdot \nabla(\lambda) = L_\lambda + 2 \sum_{j=1}^n \left(a_j X_j(\lambda) + b_j Y_j(\lambda) \right).
\end{align}
By defining $d\mu_\nu(x, y) = e^{2(a \cdot x + b \cdot y)} \, dx \, dy$, it is also easy to check that the pair $(L_{\nu,\lambda}, d\mu_\nu)$ satisfies the Green's formula: 
\begin{align} \label{identity-green-drifted-twisted-laplacian} 
& \int F_1(z) \, \overline{\left(L_{\nu,\lambda} F_2 \right)(z)} \, d\mu_\nu(z) = \int \left(L_{\nu,\lambda} F_1 \right)(z) \, \overline{F_2(z)} \, d\mu_\nu(z) \\ 
\nonumber &= - \sum_{j=1}^n \int \left\{ \left(X_j(\lambda) F_1\right) (z) \, \overline{\left( X_j(\lambda) F_2 \right)(z)} + \left( Y_j(\lambda) F_1 \right) (z) \, \overline{\left( Y_j(\lambda) F_2 \right)(z)} \right\} d\mu_\nu(z), 
\end{align}
for any $F_1, F_2 \in C_c^\infty(\C^n).$ This simply means that $L_{\nu,\lambda}$ is negative-definite and essentially self-adjoint on $L^2(\C^n, d\mu_\nu)$. 

\medskip Using the functional calculus, we get from \eqref{eq:relation-drift-operators} that 
\begin{align*} 
e^{t \mathcal{L}_\nu} (e^{i \lambda s} F(z))= e^{i \lambda s} e^{t \, L_{\nu,\lambda}} F(z), 
\end{align*}
and thus if $p_{t,\lambda}^{(\nu)}$ denotes the integral kernel of $e^{t \, L_{\nu,\lambda}}$, in the sense that 
$$ e^{t \, L_{\nu,\lambda}} \phi(z) = \int_{\C^n} \phi(w) \, p_{t,\lambda}^{(\nu)} (z,w) \, d\mu_{\nu}(w),$$ 
then it follows that 
\begin{align} \label{eq:Heat-ker-twisted-lap-drift} 
p^{(\nu)}_{t,\lambda} (z,w) = e^{-|\nu|^2t} \, e^{-a \cdot (x+u) - b \cdot (y+v)} \, p_{t,\lambda}(z-w) \, e^{\frac{i\lambda}{2} \Im(z \cdot \bar{w})}, 
\end{align}
where $\nu = (a, b)$, $z = x+iy$ and $w = u+iv$. Also, the integral kernel of $(-L_{\nu,\lambda})^{-k/2}$, for $k \geq 1,$ can be expressed as 
\begin{equation} \label{eq:kernel-fractional-power-twisted-lap-drift} 
(-L_{\nu,\lambda})^{-k/2} (z,w) = \frac{1}{\Gamma(k/2)} \int_{0}^{\infty} t^{\frac{k}{2}-1} \, p_{t,\lambda}^{(\nu)}(z,w) \, dt. 
\end{equation}

\medskip As in the case of $\mathbb{H}^n$, working with the twisted Laplacians $L_{\nu,\lambda}$ on $\mathbb{C}^n$, let $P_k (\lambda)$ denote a homogeneous polynomial of degree $k \geq 1$ in $X_j (\lambda)$ and $Y_j (\lambda)$, with $1 \leq j \leq n$. We consider the $k^{th}$-order Riesz transform $\mathcal{R}_{P_k, \nu,\lambda}$ defined by 
\begin{align} \label{def:twisted-Riesz-transform-gen-form} 
\mathcal{R}_{P_k, \nu, \lambda} = P_k (\lambda) (-L_{\nu,\lambda})^{-k/2}, 
\end{align}
and as above, for monomials in $X_j (\lambda)$ and $Y_j (\lambda)$, we use the single notation $\mathcal{R}_{k, \nu, \lambda}$. 

\medskip It is straightforward to verify that the Riesz transforms $\widetilde{\mathcal{R}}_{Q_k, \nu}$ and the family of Riesz transforms $\mathcal{R}_{P_k, \nu,\lambda}$ are in a one-one correspondence. More precisely, one can go back and forth via the recipe 
\begin{align} \label{relation:Riesz-transform-gen-form-Cn-Hn} 
\widetilde{\mathcal{R}}_{Q_k, \nu} f (z,t) = \int_{\R} \left( \mathcal{R}_{P_k, \nu,\lambda} f^\lambda \right) (z) \, e^{-i \lambda t} \, d\lambda, 
\end{align}
and similarly one can establish a natural relation between $\widetilde{\mathcal{R}}_{k, \nu}$ and $\mathcal{R}_{k, \nu,\lambda}$. 

\medskip In order to simplify the technical side of some of the proofs of our results in the subsequent sections, we can perform rotation and dilation to reduce matters to the specific drift vectors. We show it over the next two subsections. 


\subsection{Drift and rotation} \label{subsec:drift-change-with-rotation}
Let us see here that to study the weak or strong-type boundedness of the Riesz transforms $R_{k,\nu,\lambda}$, without any loss of generality one can assume the drift vector $\nu$ to be of a specific form, say $\nu' = |\nu| e_1$. 

\medskip Given an $n \times n$ unitary matrix $A=(c_{ij})$, it is easy to see from the expression \eqref{eq:Heat-ker-twisted-lap-drift} that $ p_{t,\lambda}^{(A\nu)}(Az,Aw) = p_{t,\lambda}^{(\nu)}(z,w),$ which implies that $ (-L_{A\nu,\lambda})^{-1/2} (Az,Aw) = (-L_{\nu,\lambda})^{-1/2} (z, w).$ Now, via a simple chain rule of the differentiation we get  
\begin{align*}
& \left\{ X_j(\lambda) (-L_{\nu,\lambda})^{-1/2} \right\} (z,w) \\ 
&= \left\{ Z_j(\lambda) (-L_{\nu,\lambda})^{-1/2} \right\} (z,w) + \left\{ \bar{Z}_j(\lambda) (-L_{\nu,\lambda})^{-1/2} \right\} (z,w) \\ 
&= \left(\frac{\partial}{\partial z_j} - \frac{\lambda}{4} \bar{z}_j \right) \left\{(-L_{A\nu,\lambda})^{-1/2} (Az,Aw) \right\} + \left(\frac{\partial}{\partial \bar{z}_j} + \frac{\lambda}{4} z_j \right) \left\{(-L_{A\nu,\lambda})^{-1/2} (Az,Aw) \right\} \\ 
&= \sum_{l=1}^n c_{lj} \left( \frac{\partial}{\partial z_l} (-L_{A\nu,\lambda})^{-1/2} \right) (Az,Aw) - \frac{\lambda}{4} \bar{z}_j (-L_{A\nu,\lambda})^{-1/2} (Az,Aw) \\ 
& \quad + \sum_{l=1}^n \overline{c_{lj}} \left( \frac{\partial}{\partial \bar{z}_l} (-L_{A\nu,\lambda})^{-1/2} \right) (Az,Aw) + \frac{\lambda}{4} z_j (-L_{A\nu,\lambda})^{-1/2} (Az,Aw) \\ 
& = \left\{ \sum_{l=1}^n \left( c_{lj} \, Z_l(\lambda) + \overline{c_{lj}} \, \bar{Z}_l(\lambda) \right) (-L_{A \nu,\lambda})^{-1/2} \right\} (Az,Aw). 
\end{align*}

\medskip Recursively, one can verify that the integral kernel $\mathcal{R}_{k, \nu, \lambda} (z,w)$ can be expressed as a finite linear combination of the integral kernels of the form $\mathcal{R}_{k, A \nu, \lambda} (Az, Aw)$. The coefficients in such a linear combination involve the matrix entries $c_{ij}$'s which are uniformly bounded over the unitary matrices $A$. The claimed reduction then follows by choosing the unitary matrix $A$ such that $A \nu = \nu' = |\nu| e_1$. 


\subsection{Drift and scaling} \label{subsec:drift-change-with-scaling}
The following lemma is about the relation between scaling of the drift vector and the scaling factor in the twisted Laplacian. 
\begin{lemma} \label{lem:Riesz-transform-drift-in-scale-transformation} 
Given a Riesz transform $\mathcal{R}_{P_k, \nu, \lambda}$, with drift vector $\nu = (a,b)$, for any $s > 0$, 
$$ \left\| \mathcal{R}_{P_k, \nu, \lambda} \right\|_{L^{1,\infty} (\mathbb{C}^n, \, d\mu_\nu)} = \left\| \mathcal{R}_{P_k, s \nu, s^2 \lambda} \right\|_{L^{1,\infty} \left(\mathbb{C}^n, \, d\mu_{s \nu} \right)}. $$
\end{lemma}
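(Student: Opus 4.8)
The plan is to exhibit an explicit dilation on $\C^n$ that intertwines the operator $L_{\nu,\lambda}$ with $L_{s\nu,s^2\lambda}$ and converts the measure $d\mu_\nu$ into a constant multiple of $d\mu_{s\nu}$ (under the pushforward), and then to observe that weak-type $(1,1)$ operator norms are invariant under such a measure-preserving-up-to-constant change of variables together with conjugation by the dilation. Concretely, for $s>0$ let $\delta_s F(z) = F(sz)$. First I would record how the vector fields transform: a direct computation from $X_j(\lambda) = \partial_{x_j} + \tfrac{i\lambda}{2} y_j$ gives $\delta_s^{-1}\circ X_j(\lambda)\circ\delta_s = s^{-1}\big(\partial_{x_j} + \tfrac{i s^2\lambda}{2} y_j\big) = s^{-1} X_j(s^2\lambda)$, and likewise for $Y_j$. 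Hence $\delta_s^{-1}\circ L_\lambda\circ\delta_s = s^{-2} L_{s^2\lambda}$, and adding the drift term, $\delta_s^{-1}\circ L_{\nu,\lambda}\circ\delta_s = s^{-2}\big(L_{s^2\lambda} + 2 s\, \nu\cdot\nabla(s^2\lambda)\big) = s^{-2} L_{s\nu,\, s^2\lambda}$.

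**Key steps.** From the operator identity, functional calculus yields $\delta_s^{-1}\circ(-L_{\nu,\lambda})^{-k/2}\circ\delta_s = s^{k}(-L_{s\nu,s^2\lambda})^{-k/2}$, and applying $P_k(\lambda)$ on the left and using $\delta_s^{-1}\circ P_k(\lambda)\circ\delta_s = s^{-k} P_k(s^2\lambda)$ (since $P_k$ is a degree-$k$ monomial), we obtain
\begin{align} \label{eq:conjugated-Riesz}
\delta_s^{-1}\circ \mathcal{R}_{P_k,\nu,\lambda}\circ\delta_s = \mathcal{R}_{P_k,\, s\nu,\, s^2\lambda}.
\end{align}
Next I would handle the measures: since $d\mu_\nu(x,y) = e^{2(a\cdot x + b\cdot y)}\,dx\,dy$, the substitution $z\mapsto sz$ gives, for any measurable $E$, $\mu_\nu(E) = \int_{\mathbf{1}_E(z)}\,e^{2\nu\cdot z}\,dz = s^{2n}\int \mathbf{1}_{s^{-1}E}(z)\, e^{2s\nu\cdot z}\,dz = s^{2n}\mu_{s\nu}(s^{-1}E)$; equivalently $\delta_s$ is an isometry (up to the constant $s^{2n}$ that cancels between numerator and level-set measure) from $L^1(d\mu_{s\nu})$-type quantities to $L^1(d\mu_\nu)$-type quantities. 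Finally, combining: given $f$, set $g = \delta_s f$; then by \eqref{eq:conjugated-Riesz}, $\mathcal{R}_{P_k,\nu,\lambda} f = \delta_s\big(\mathcal{R}_{P_k,s\nu,s^2\lambda} g\big)$, so $\{z : |\mathcal{R}_{P_k,\nu,\lambda}f(z)| > \alpha\} = s\cdot\{z : |\mathcal{R}_{P_k,s\nu,s^2\lambda}g(z)| > \alpha\}$, whose $\mu_\nu$-measure is $s^{2n}$ times the $\mu_{s\nu}$-measure of the latter set; and $\|f\|_{L^1(d\mu_\nu)} = s^{2n}\|g\|_{L^1(d\mu_{s\nu})}$. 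The factors of $s^{2n}$ cancel in the ratio $\alpha\,\mu_\nu\{\cdots\}/\|f\|_{L^1}$, and since $f\mapsto g=\delta_s f$ is a bijection on the relevant function class, taking the supremum over $f$ (resp. $g$) gives equality of the two weak-type norms.

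**Main obstacle.** None of this is deep — the only points requiring a little care are: (i) justifying that conjugation by $\delta_s$ commutes with the spectral functional calculus used to define $(-L_{\nu,\lambda})^{-k/2}$, which is immediate because $\delta_s$ is (a multiple of) a unitary equivalence of the self-adjoint operators on the respective $L^2$ spaces, so it intertwines all Borel functions of them; and (ii) bookkeeping the constant $s^{2n}$ consistently so that it genuinely cancels in the weak-type quotient. I would present (i) by first establishing the unitary equivalence $U_s : L^2(d\mu_{s\nu}) \to L^2(d\mu_\nu)$, $U_s F = s^{n}\delta_s F$, checking $U_s(-L_{s\nu,s^2\lambda})U_s^{-1} = -L_{\nu,\lambda}$ from the vector-field computation above, and then invoking the spectral theorem. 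The rest is the change-of-variables algebra sketched above, which I would carry out in one or two lines.
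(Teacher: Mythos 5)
Your overall strategy — dilate $z\mapsto sz$, show that conjugation by the dilation intertwines $L_{\nu,\lambda}$ with $L_{s\nu,s^2\lambda}$ up to the factor $s^{-2}$, promote this via functional calculus to the Riesz transforms, and then cancel the Jacobian factor $s^{2n}$ in the weak-type quotient — is the same scaling idea as the paper's. The paper verifies the key intertwining identity $\mathcal{R}_{P_k,\nu,\lambda}\phi(sz)=\mathcal{R}_{P_k,s\nu,s^2\lambda}\psi(z)$ (with $\psi(z)=\phi(sz)$) by an explicit change of variables $t\mapsto s^2 t$, $w\mapsto sw$ in the heat-kernel integral representation of the Riesz kernel, using $p^{(\nu)}_{s^2t,\lambda}(sz,sw)=s^{-2n}p^{(s\nu)}_{t,s^2\lambda}(z,w)$. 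You instead establish the operator identity $\delta_s L_{\nu,\lambda}\delta_s^{-1}=s^{-2}L_{s\nu,s^2\lambda}$ and then invoke the spectral theorem for the fractional power. Both routes prove the same identity; yours is a cleaner, more conceptual shortcut that bypasses the explicit kernel computation. The measure-bookkeeping part is identical.

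However, your conjugation bookkeeping is off in a way you should repair. With $\delta_s F(z)=F(sz)$, a direct check gives $\delta_s X_j(\lambda)\delta_s^{-1}=s^{-1}X_j(s^2\lambda)$ and $\delta_s^{-1}X_j(\lambda)\delta_s = s\,X_j(\lambda/s^2)$, so your displayed formula $\delta_s^{-1}X_j(\lambda)\delta_s = s^{-1}X_j(s^2\lambda)$ has the two dilations swapped. Consequently the correct form of your identity \eqref{eq:conjugated-Riesz} is $\delta_s\,\mathcal{R}_{P_k,\nu,\lambda}\,\delta_s^{-1}=\mathcal{R}_{P_k,s\nu,s^2\lambda}$, matching the paper's $\delta_s\circ\mathcal{R}_{P_k,\nu,\lambda}=\mathcal{R}_{P_k,s\nu,s^2\lambda}\circ\delta_s$. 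Your subsequent line ``set $g=\delta_s f$; then $\mathcal{R}_{P_k,\nu,\lambda}f=\delta_s(\mathcal{R}_{P_k,s\nu,s^2\lambda}g)$'' then also disagrees with the formula it cites (it should be $\delta_s^{-1}$ on the right), although the claim about level sets that follows is consistent with the corrected version. Similarly, your proposed unitary $U_sF=s^n\delta_s F$ does not map $L^2(d\mu_{s\nu})\to L^2(d\mu_\nu)$ isometrically; the right choice is $U_sF(z)=s^{-n}F(z/s)$. None of these affect the conclusion, since the stated equality of weak-type norms is symmetric under $(\nu,\lambda,s)\mapsto(s\nu,s^2\lambda,1/s)$, so the errors in the $\delta_s$-versus-$\delta_s^{-1}$ direction cancel; but for the intermediate steps to be internally consistent you should fix them once and carry the choice through.
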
 
\begin{proof} 
Given $\phi \in C_c^\infty (\mathbb{R}^n)$, define $\psi(z) = \phi (sz)$, so that $ \left\| \psi \right\|_{L^1 \left(\mathbb{C}^n, \, d\mu_{s\nu} \right)} = s^{-2n} \left\| \phi \right\|_{L^1(\mathbb{C}^n, \, d\mu_\nu)},$ 
and therefore the claim of the lemma shall follow if we can show that 
\begin{equation} \label{eq:relation-riesz-transforms-dilation}
\mathcal{R}_{P_k, \nu, \lambda} \phi (sz) = \mathcal{R}_{P_k, s \nu, s^2 \lambda} \psi(z), 
\end{equation}
because then 
$$ \mu_\nu \left\{ z : \left| \mathcal{R}_{P_k, \nu,\lambda} \phi(z) \right| > \alpha \right\} = s^{2n} \, \mu_{s \nu} \left\{ \zeta : \left| \mathcal{R}_{P_k, s \nu, s^2 \lambda} \psi(\zeta) \right| > \alpha \right\}, $$
and therefore 
\begin{align*} \left\|\mathcal{R}_{P_k, \nu, \lambda}\right\|_{L^{1,\infty}(\mathbb{C}^n, \, d\mu_\nu)} & = \sup_{\alpha>0} \frac{\mu_\nu \left\{ z : |\mathcal{R}_{P_k, \nu, \lambda} \phi(z)| > \alpha \right\}}{\left\| \phi \right\|_{L^{1} (\mathbb{C}^n, \, d\mu_\nu)}} \\ 
& = \sup_{\alpha>0} \frac{\mu_{s\nu} \left\{ \zeta : |\mathcal{R}_{P_k, s \nu, s^2 \lambda} \psi(\zeta)|>\alpha \right\}}{\left\| \psi \right\|_{L^{1} \left(\mathbb{C}^n, \, d\mu_{s \nu} \right)}} = \left\| \mathcal{R}_{P_k, s \nu, s^2 \lambda} \right\|_{L^{1,\infty} \left(\mathbb{C}^n, \, d\mu_{s \nu} \right)}. 
\end{align*}

\medskip For technical convenience, we write below the proof of identity \eqref{eq:relation-riesz-transforms-dilation} only in the case of $k=1$ and with $ P_k (\lambda) = X_j (\lambda) $. From the proof it will become clear that the same idea works for general order $k$. 
Recall from \eqref{eq:kernel-fractional-power-twisted-lap-drift} that 
\begin{align*}
(-L_{\nu,\lambda})^{-1/2} (z, w) &= \frac{1}{\sqrt{\pi}} \int_{0}^{\infty} t^{-1/2} \, p_{t,\lambda}^{(\nu)}(z,w)  \, dt, 
\end{align*}
so that the integral kernel of $\mathcal{R}_{1, \nu,\lambda} = X_j (\lambda) (-L_{\nu,\lambda})^{-1/2} = \left( \frac{\partial}{\partial x_j} + \frac{i \lambda}{2} y_j \right) (L_{\nu,\lambda})^{-1/2} $ equals to 
$$ \mathcal{R}_{1, \nu,\lambda} (z,w) = \frac{1}{\sqrt{\pi}} \int_{0}^{\infty} t^{-1/2} \, \left(-a_j - \frac{\lambda}{2} \coth (\lambda t) (x_j-u_j) + \frac{i \lambda}{2} (y_j-v_j) \right) \, p_{t,\lambda}^{(\nu)}(z,w)  \, dt,  $$
and therefore  
\begin{align*}
\mathcal{R}_{1, \nu,\lambda} \phi(z) & = \frac{1}{\sqrt{\pi}} \int_{0}^{\infty} t^{-1/2} \int_{\C^n} \left(-a_j - \frac{\lambda}{2} \coth (\lambda t) (x_j-u_j) + \frac{i \lambda}{2} (y_j-v_j) \right) \, p_{t,\lambda}^{(\nu)}(z,w)  \, \phi(w) \, d\mu_\nu(w) \, dt. 
\end{align*}

\medskip In the above integrals, let us apply basic change of variables $t \mapsto s^2 t$, \, $w \mapsto s w$ and evaluate at the scaled point $sz$. Making use of the fact that $p_{s^2t,\lambda}^{(\nu)}(sz,sw) = s^{-2n} p_{t,s^2 \lambda}^{(s\nu)}(z,w)$, we get 
\begin{align*}
\mathcal{R}_{1, \nu, \lambda} \phi(sz) & = \frac{s^{2n+1}}{\sqrt{\pi}} \int_{0}^{\infty} t^{-1/2} \int_{\C^n} \left(-a_j - \frac{s \lambda}{2} \coth (s^2 \lambda t) \, (x_j-u_j) + \frac{i s \lambda}{2} (y_j-v_j) \right) \\ 
& \qquad \qquad p_{s^2t,\lambda}^{(\nu)}(sz, s w) \, \phi(s w) \, d\mu_{s\nu} (w) \, dt \\ 
& = \frac{1}{\sqrt{\pi}} \int_{0}^{\infty} t^{-1/2} \int_{\C^n} \left(- s a_j - \frac{s^2 \lambda}{2} \coth (s^2 \lambda t)  \, (x_j-u_j) + \frac{i s^2 \lambda}{2} (y_j-v_j) \right) \\ 
& \qquad \qquad p_{t,s^2 \lambda}^{(s\nu)}(z,w) \, \psi(w) \, d\mu_{s\nu} (w) \, dt \\ 
& = \mathcal{R}_{1, s \nu, s^2 \lambda} \psi(z), 
\end{align*}
which is exactly the claimed identity \eqref{eq:relation-riesz-transforms-dilation} and this completes the proof of the lemma. 
\end{proof} 

We end this section by noting that a minor modification in the proof of the above lemma leads to 
$$ \left\| \mathcal{R}_{P_k, \nu, \lambda} \right\|_{L^{p} (\mathbb{C}^n, \, d\mu_\nu)} = \left\| \mathcal{R}_{P_k, s \nu, s^2 \lambda} \right\|_{L^{p} \left(\mathbb{C}^n, \, d\mu_{s \nu} \right)},$$
for any $s > 0$ and $1 \leq p \leq \infty$. 


\section{Proofs of Theorems \ref{thm:Lp-Riesz-twisted-Laplacian} and \ref{thm:higher-order-not-uniform}} \label{sec:proof-thm-with-transference}

Given a homogeneous polynomial $P_k (\lambda)$ in $X_j (\lambda)$ and $Y_j (\lambda)$ of degree $k \geq 1$, we first study the behaviour of the Riesz transforms $\mathcal{R}_{P_k, \nu, \lambda}$ (as a function of $\lambda$) when applied to smooth functions. We shall make use of it in the proofs of Theorems \ref{thm:Lp-Riesz-twisted-Laplacian} and \ref{thm:higher-order-not-uniform} in Subsections \ref{subsec:proof-two-basic-thms-1} and \ref{subsec:proof-two-basic-thms-2}. 


\subsection{Continuity of the Riesz transforms \texorpdfstring{$\mathcal{R}_{P_k, \nu, \lambda}$}{} in \texorpdfstring{$\lambda$}{}-variable} \label{subsec:continuity-lambda-variable}
Fix $\phi \in C_c^\infty(\C^n)$ and define the functions $F : \mathbb{R} \setminus \{0\}  \times \C^n \to \mathbb{C}$ and $F_0 : \C^n \to \mathbb{C}$ by 
\begin{align} \label{eq:Riesz-lambda-pointwise-continuity}
F (\lambda, z) := \mathcal{R}_{P_k, \nu, \lambda} \phi (z) \quad \textup{and} \quad F_0 (z) := D_k \left( - \Delta_\nu \right)^{-k/2} \phi (z), 
\end{align}
where $D_k$ is the homogeneous differential operator obtained by taking $\lambda \to 0$ in $P_k(\lambda)$. 


\begin{lemma} \label{lem:Riesz-lambda-pointwise-continuity} 
For functions $F$ and $F_0$ defined by \eqref{eq:Riesz-lambda-pointwise-continuity}, the following assertions hold true. 

\begin{enumerate}[(i)]
\item $F_0$ is uniformly continuous on $\C^n$.  

\item $F$ is continuous on $\mathbb{R} \setminus \{0\} \times \C^n$. 

\item For any $z_0 \in \C^n$, $\displaystyle \lim_{(\lambda, z) \to (0, z_0)} F (\lambda, z) = F_0 (z_0).$ 
\end{enumerate} 
Consequently, $F$ extends to a continuous function on $\mathbb{R} \times \C^n$ by prescribing $F(0, z) := F_0 (z)$. 
\end{lemma}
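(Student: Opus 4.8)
The strategy is to reduce everything to explicit integral representations coming from \eqref{eq:kernel-fractional-power-twisted-lap-drift} and \eqref{eq:Heat-ker-twisted-lap-drift}, and then justify passing limits inside the $t$-integral by dominated convergence. First I would write, for $\phi \in C_c^\infty(\C^n)$,
\[
\mathcal{R}_{P_k,\nu,\lambda}\phi(z) = \frac{1}{\Gamma(k/2)} \int_0^\infty t^{k/2-1} \bigl( P_k(\lambda) \, p_{t,\lambda}^{(\nu)}(\cdot,\cdot)\bigr)(z,w) \ast \phi \, dt,
\]
where the action of $P_k(\lambda)$ on the kernel is computed explicitly via the formulas $X_j(\lambda) = \partial_{x_j} + \tfrac{i\lambda}{2}y_j$, $Y_j(\lambda) = \partial_{y_j} - \tfrac{i\lambda}{2}x_j$ and the Gaussian form \eqref{def:heat-kernel-twisted-laplacian}. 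Each application of a vector field pulls down a factor that is polynomial in $z,w$ with coefficients that are smooth functions of $(\lambda,t)$ involving $\lambda\coth(\lambda t)$, $a_j$, $b_j$ — all of which extend continuously to $\lambda=0$ (using $\lambda\coth(\lambda t)\to 1/t$). So the integrand $G(\lambda,t,z,w):=t^{k/2-1}\bigl(P_k(\lambda)p_{t,\lambda}^{(\nu)}\bigr)(z,w)\phi(w)$ is jointly continuous in $(\lambda,t,z,w)$ on $(\mathbb{R}\setminus\{0\})\times(0,\infty)\times\C^n\times\supp\phi$, and extends continuously to $\lambda=0$, where it becomes the corresponding integrand for $D_k(-\Delta_\nu)^{-k/2}\phi$.

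**The key estimate.** The heart of the matter is a uniform integrable majorant: I need a function $H(t)\in L^1((0,\infty),dt)$ with $|G(\lambda,t,z,w)|\le H(t)$ for all $z$ in a neighbourhood of $z_0$, all $w\in\supp\phi$, and all $\lambda$ in a neighbourhood of $0$ (including $\lambda=0$). For small $t$, the Gaussian $e^{-\frac14\lambda\coth(\lambda t)|z-w|^2}$ behaves like $e^{-|z-w|^2/(4t)}$ with prefactor $(\lambda/\sinh(\lambda t))^n\sim t^{-n}$; combined with the polynomial factors of size $O(|z-w|^k)\le O(t^{k/2})$ on the Gaussian's natural scale, and the $e^{-|\nu|^2 t}$ decay plus the bounded exponential weight $e^{-a\cdot(x+u)-b\cdot(y+v)}$ (bounded since $z$ ranges over a compact neighbourhood and $w\in\supp\phi$), one gets $|G|\lesssim t^{k/2-1}\cdot t^{-n}\cdot t^{k/2}\cdot(\text{bounded})$ near $t=0$ — but this needs care, since the Gaussian decay in $|z-w|$ is what actually makes the $t^{-n}$ integrable, so I would more honestly bound $|G(\lambda,t,z,w)| \lesssim t^{k/2-1}\,t^{-n/2}\|\phi\|_\infty$, which is integrable near $0$ because $k\ge 1$. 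For large $t$ the factor $e^{-|\nu|^2 t}$ (note $\nu\ne 0$, and $\lambda\coth(\lambda t)\ge 1/t>0$ so the Gaussian is $\le 1$) gives exponential decay, so large-$t$ integrability is immediate. The one genuinely delicate point is the uniformity of these bounds as $\lambda\to 0$: I must check that the constants hidden in $\lesssim$ do not blow up, which amounts to the elementary but slightly fiddly fact that $\lambda\coth(\lambda t)$ and $\lambda/\sinh(\lambda t)$ are bounded above and below by $t$-dependent quantities uniformly for $|\lambda|$ small, say $\tfrac{1}{2t}\le \lambda\coth(\lambda t)$ and $\lambda/\sinh(\lambda t)\le 2/t$ for $|\lambda t|$ bounded — and for $|\lambda|$ small and $t$ in any fixed range this is clear, while for $t$ large one uses $e^{-|\nu|^2 t}$ to kill everything.

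**Assembling the three claims.** Given the majorant, part (iii) is the dominated convergence theorem applied to $\int_0^\infty G(\lambda_m,t,z_m,w)\,dw\,dt$ along any sequence $(\lambda_m,z_m)\to(0,z_0)$, using joint continuity of $G$ and its continuous extension to $\lambda=0$; the inner $w$-integral is over the compact set $\supp\phi$ so poses no issue. Part (ii), continuity of $F$ on $(\mathbb{R}\setminus\{0\})\times\C^n$, follows the same way but is easier — one fixes a compact neighbourhood of any $(\lambda_0,z_0)$ with $\lambda_0\ne 0$, on which the same type of majorant (now not needing the delicate $\lambda\to 0$ uniformity, since $\lambda$ is bounded away from $0$ and from $\infty$ on a compact set, making $\lambda\coth(\lambda t)$ comfortably controlled) applies. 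Part (i) is a special case of the machinery with $\lambda$ identically $0$: $F_0(z)=\frac{1}{\Gamma(k/2)}\int_0^\infty t^{k/2-1}(D_k\, p_{t,0}^{(\nu)})(z,\cdot)\ast\phi\,dt$ where $p_{t,0}^{(\nu)}$ is the drifted Euclidean heat kernel, and the same dominated-convergence argument shows $F_0$ is continuous; uniform continuity on all of $\C^n$ then follows because the exponential weight $e^{-a\cdot(x+u)-b\cdot(y+v)}$ combined with $\supp\phi$ being compact makes the modulus of continuity estimate translation-uniform — alternatively one observes $F_0 = D_k(-\Delta_\nu)^{-k/2}\phi$ is, by the classical theory (or by conjugating out the weight to land on the ordinary Euclidean Riesz-type operator applied to a Schwartz-class function $e^{2(a\cdot x+b\cdot y)/?}\phi$-type object), a bounded uniformly continuous function. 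The final sentence — that $F$ extends continuously to $\mathbb{R}\times\C^n$ by $F(0,z):=F_0(z)$ — is then just the conjunction of (i), (ii), (iii): continuity at interior points of $\{\lambda\ne 0\}$ is (ii), continuity at points $(0,z_0)$ is (iii) together with (i) ensuring $F_0$ itself is continuous there. I expect the main obstacle to be precisely the bookkeeping in the uniform-in-$\lambda$ majorant near $\lambda=0$ and $t=0$ simultaneously; everything else is routine dominated convergence.
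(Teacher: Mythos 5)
The physical-space dominated-convergence strategy you sketch is reasonable in outline, but the crucial majorant you propose is incorrect, and without it the whole argument breaks down. The error is concentrated in the sentence ``$|G(\lambda,t,z,w)|\lesssim t^{k/2-1}\,t^{-n/2}\,\|\phi\|_\infty$, which is integrable near $0$ because $k\ge1$.'' First, even taking that bound at face value, $\int_0^1 t^{k/2-1-n/2}\,dt$ converges only when $k>n$, not for all $k\ge1$; already with $k=1$, $n=1$ it is $\int_0^1 t^{-1}\,dt=\infty$. Second, the bound itself is wrong, because you have not accounted for the $\lambda\coth(\lambda t)$ factors that $P_k(\lambda)$ pulls out of the Gaussian: each application of $X_j(\lambda)$ or $Y_j(\lambda)$ to $p_{t,\lambda}^{(\nu)}$ produces a factor of roughly $\lambda\coth(\lambda t)\,|z-w|\sim |z-w|/t$ near $t=0$ (cf.\ the explicit structure in \eqref{eq:general_term_Riesz_transform}), not merely $|z-w|$. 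On the Gaussian scale $|z-w|\sim\sqrt t$ this contributes $t^{-k/2}$, not $t^{+k/2}$. Carrying this through, the sharp pointwise-in-$w$ size of $G$ is $\sim t^{k/2-1}\cdot t^{-n-k/2}=t^{-n-1}$, and even after integrating in $w$ against the Gaussian one is left with $\sim t^{-1}$, which is logarithmically divergent near $t=0$. So no majorant of the form you want exists, and naive dominated convergence on the $(t,w)$-integral does not apply.

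The missing idea is that the singularity at $t=0$ must be defused before one can apply dominated convergence. In physical space this requires, in effect, moving the $k$ derivatives from the heat kernel onto $\phi$ (integration by parts, using the left-invariance $X_j(\lambda)(F\ast_\lambda G)=F\ast_\lambda(X_j(\lambda)G)$ and its right-sided analogue, with care about the drift terms); after that, $P_k(\lambda)\,e^{tL_{\nu,\lambda}}\phi(z)$ is bounded and continuous down to $t=0$, and $\int_0^1 t^{k/2-1}\cdot O(1)\,dt$ converges for $k\ge1$ as you would want. You do not carry out this step. The paper's proof avoids the issue altogether by passing to Fourier space via Parseval: there the integrand is $t^{k/2-1}\,e^{-4\pi^2 t|\zeta|^2}\,p_k(\zeta)\,\widehat{\phi}(\zeta+\tfrac{i\lambda z}{4\pi})$ (plus lower-order terms), which is manifestly absolutely integrable in $(t,\zeta)$ because the $t^{k/2-1}$ singularity is integrable and the $\zeta$-decay of $\widehat\phi$ controls the polynomial $p_k$; the explicit dominating integrals $I_1,I_2,I_3$ are then easy to compute and their $\lambda\to0$ behaviour gives part (iii) essentially for free. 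If you want to salvage the physical-space route, you must insert the integration-by-parts step and then redo the majorant; as written, the key estimate is a genuine gap, not just a bookkeeping issue.
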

\begin{proof} \textbf{Part $(i)$:} Let $D_k$ be given by $D_k = \sum_{|\alpha| + |\beta| = k} a_{\alpha, \beta} \, \partial_x^\alpha \, \partial_y^\beta$ and denote its associated homogeneous polynomial by $ p_k(\zeta) = p_k(\xi, \eta) = \sum_{|\alpha| + |\beta| = k} a_{\alpha, \beta} \xi^\alpha \, \eta^\beta$. For $t>0$, we have 
\begin{align*}
e^{t\Delta_\nu} \phi(z) & = (4\pi t)^{-n} e^{-|\nu|^2 t } \int_{\C^n}  e^{- a \cdot (x+u) - b \cdot (y+v)} \, e^{-\frac{1}{4t} |z-w|^2} \, \phi(w) \, d\mu_\nu(w) \\ 
& =  (4\pi t)^{-n} e^{-|\nu|^2 t } \int_{\C^n}  e^{- a \cdot (x-u) - b \cdot (y-v)} \, e^{-\frac{1}{4t} |z-w|^2} \, \phi(w) \, dw \\ 
& = \int_{\C^n} e^{-2\pi i(x+2at, \, y+2bt)\cdot (\xi,\eta)} \, e^{-4\pi^2 t|\zeta|^2} \, \widehat{\phi}(\zeta) \, d\zeta, 
\end{align*}
where the last equality follows from the Parseval's theorem for the Euclidean Fourier transform, and therefore  
\begin{align*}
F_0(z) & = D_k (-\Delta_\nu)^{-k/2} \phi(z) \\ 
& = \frac{1}{\Gamma(k/2)} \int_{0}^{\infty} t^{\frac{k}{2}-1} D_k e^{t\Delta_\nu} \phi(z) \, dt \\ 
& = \frac{(-2\pi i)^k}{\Gamma(k/2)} \int_{0}^{\infty} \int_{\C^n} t^{\frac{k}{2}-1} \, e^{-2\pi i(x+2at, \, y+2bt) \cdot (\xi,\eta)} \, e^{-4\pi^2 t|\zeta|^2} \, p_k(\zeta) \, \widehat{\phi}(\zeta) \, d\zeta \, dt. 
\end{align*}
Thanks to the homogeneity of the polynomial $p_k$, the above integral converges absolutely and the uniform continuity of $F_0$ follows from the dominated convergence theorem.  

\medskip \noindent \textbf{Part $(ii)$:} We shall proceed on the similar lines as in part $(i)$. Owing to the technical nature of the proof, we shall write the details for the case of $k=1$ with $P_k (\lambda) = X_j (\lambda)$, and it will be self-evident that the general case can be shown in an exactly same manner. Once again, for $t>0$ we have 
\begin{align*}
& e^{t \, L_{\nu,\lambda}} \phi(z) \\ 
\quad & = (4\pi)^{-n} \left(\frac{\lambda}{\sinh (\lambda t) } \right)^n e^{-|\nu|^2 t} \int_{\C^n} e^{- a \cdot (x-u) - b \cdot (y-v)} \, e^{-\frac{1}{4} \lambda \coth (\lambda t) \, |z-w|^2} \, \phi(w) \, e^{\frac{i\lambda}{2} \Im(z \cdot \bar{w})} \, dw \\ 
&= \left( \cosh{(\lambda t)} \right)^{-n} e^{-|\nu|^2 t} \, e^{\frac{|\nu|^2}{\lambda \coth (\lambda t)}} \\ 
& \qquad \times \int_{\C^n} e^{-2\pi i \left( x+\frac{2a}{\lambda \coth{(\lambda t)}}, \, y + \frac{2b}{\lambda\coth{(\lambda t)}} \right) \cdot (\xi,\eta)} \, e^{-\frac{4\pi^2|\zeta|^2}{\lambda\coth{(\lambda t)}}} \, \widehat{\phi} \left(\zeta+\frac{i\lambda z}{4\pi}\right) d\zeta. 
\end{align*}
with the last equality following from the Parseval's theorem for the Euclidean Fourier transform, and therefore
\begin{align} \label{eq:lim_lambda_and_z}
F(\lambda, z) & = X_j(\lambda) (-L_{\nu,\lambda})^{-1/2} \phi(z) \\ 
\nonumber & = \frac{1}{\Gamma(1/2)} \int_{0}^{\infty} t^{-1/2} \left( X_j(\lambda) \, e^{t \, L_{\nu,\lambda}} \, \phi \right) (z) \, dt \\ 
\nonumber & = \frac{1}{\Gamma(1/2)} \int_{0}^{\infty} \int_{\C^n} t^{-1/2} (\cosh {(\lambda t)})^{-n} e^{-|\nu|^2t} \, e^{\frac{|\nu|^2}{\lambda\coth{(\lambda t)}}} \, e^{-\frac{4\pi^2|\zeta|^2}{\lambda\coth{(\lambda t)}}} \\ 
\nonumber \nonumber & \qquad X_j (\lambda) \left\{ e^{-2\pi i\left(x+\frac{2a}{\lambda \coth {(\lambda t)}}, \, y+\frac{2b}{\lambda \coth {(\lambda t)}}\right) \cdot (\xi,\eta)} \, \widehat{\phi} \left( \xi - \frac{\lambda}{4\pi} y, \, \eta + \frac{\lambda}{4\pi} x \right) \right\} d\xi \, d\eta \, dt \\ 
\nonumber &= \frac{1}{\Gamma(1/2)} \int_{0}^{\infty} \int_{\C^n} t^{-1/2} (\cosh {(\lambda t)})^{-n} e^{-|\nu|^2t} \, e^{\frac{|\nu|^2}{\lambda \coth {(\lambda t)}}} \, e^{-2\pi i\left(x+\frac{2a}{\lambda \coth {(\lambda t)}}, \, y+\frac{2b}{\lambda\coth{(\lambda t)}}\right) \cdot (\xi,\eta)} \\ 
\nonumber & \qquad e^{-\frac{4\pi^2|\zeta|^2}{\lambda \coth {(\lambda t)}}} \left\{-2\pi i \, \xi_j \, \widehat{\phi} \left( \xi - \frac{\lambda}{4\pi} y, \, \eta + \frac{\lambda}{4\pi} x \right) + \frac{\lambda}{4\pi} \frac{\partial \widehat{\phi}}{\partial \eta_j} \left( \xi - \frac{\lambda}{4\pi} y, \, \eta + \frac{\lambda}{4\pi} x \right) \right. \\ 
\nonumber & \qquad \qquad \qquad \qquad \left. + \frac{i \lambda}{2} y_j \, \widehat{\phi} \left( \xi - \frac{\lambda}{4\pi} y, \, \eta + \frac{\lambda}{4\pi} x \right) \right\} d\xi \, d\eta\, dt.
\end{align}

\medskip Note that corresponding to each one of the three terms in the last expression, the integral is absolutely integrable. Moreover, one can apply the generalized Lebesgue dominated convergence theorem to pass on the limit $(\lambda, z)$ to $(\lambda_0, z_0)$ inside the integral $(0, \infty) \times \C^n$. This can be seen by analysing the following three integrals with their integrands dominating the corresponding ones in the above integral. Here we also make use of the basic estimate $\cosh {(\lambda t)} \gtrsim e^{t|\lambda|}$. 
\begin{align*}
I_1 (\lambda, z) &= \int_{0}^{\infty} \int_{\C^n} t^{-1/2} \, e^{-4\pi^2 t|\zeta|^2} \, |\zeta| \, \left| \widehat{\phi} \left(\zeta+\frac{i\lambda z}{4\pi}\right) \right| d\zeta \, dt, \\ 
I_2 (\lambda, z) &= |\lambda| \int_{0}^{\infty} \int_{\C^n} t^{-1/2} \, e^{-n t |\lambda|} \left| \frac{\partial \widehat{\phi}}{\partial \eta_j} \left(\zeta+\frac{i\lambda z}{4\pi}\right) \right| d\zeta \, dt, \\
\textup{and} \quad I_3 (\lambda, z) &= |\lambda| |z| \int_{0}^{\infty} \int_{\C^n} t^{-1/2} \, e^{-n t |\lambda|} \left| \widehat{\phi} \left(\zeta+\frac{i\lambda z}{4\pi}\right) \right| d\zeta \, dt. 
\end{align*}

\medskip Now, observe that 
\begin{align*}
I_1 (\lambda, z) &= \int_{0}^{\infty} \int_{\C^n} t^{-1/2} \, e^{-4\pi^2 t|\zeta|^2} |\zeta| \left| \widehat{\phi} \left(\zeta+\frac{i\lambda z}{4\pi}\right) \right| d\zeta \, dt \\ 
& = \int_{0}^{\infty} \int_{\C^n} t^{-1/2} \, e^{-4\pi^2 t \left|\zeta-\frac{i\lambda z}{4\pi}\right|^2} \left|\zeta-\frac{i\lambda z}{4\pi}\right| | \widehat{\phi}(\zeta) | \, d\zeta \, dt = \int_{0}^{\infty} \int_{\C^n} s^{-1/2} \, e^{-s} \, |\widehat{\phi}(\zeta)| \, d\zeta \, ds,
\end{align*}
which is finite independent of $\lambda$ and $z$-variables. 

\medskip Next, 
\begin{align*}
I_2 (\lambda, z) & = |\lambda| \int_{0}^{\infty} \int_{\C^n} t^{-1/2} \, e^{-nt|\lambda|} \left| \frac{\partial \widehat{\phi}}{\partial \eta_j} \left(\zeta+\frac{i\lambda z}{4\pi}\right) \right| d\zeta \, dt \\ 
& = |\lambda| \int_{0}^{\infty} \int_{\C^n} t^{-1/2} \, e^{-nt|\lambda|} \left| \frac{\partial \widehat{\phi}}{\partial \eta_j} (\zeta) \right| d\zeta \, dt = |\lambda|^{1/2} \int_{0}^{\infty} \int_{\C^n} s^{-1/2} e^{-s} \left| \frac{\partial \widehat{\phi}}{\partial \eta_j} (\zeta) \right| d\zeta \, ds,
\end{align*}
which is also finite, independent of $z$-variable and continuous in $\lambda$-variable. 

\medskip Similarly, one can see that $I_3 (\lambda, z)$ is equal to a constant multiple of $|\lambda|^{1/2} |z|$, thus completing the proof of claim in part $(ii)$. 

\medskip \noindent \textbf{Part $(iii)$:} As in the above part, working with $k=1$ and $P_1 (\lambda) = X_j (\lambda)$, it is clear that $I_2 (\lambda, z)$ and $I_3 (\lambda, z)$ tend to $0$ as $\lambda \to 0$. Thus, taking limit as $(\lambda, z) \to (0, z_0)$ in \eqref{eq:lim_lambda_and_z}, we get that 
\begin{align*}
\lim_{(\lambda, z) \to (0, z_0)} F(\lambda, z) &= \frac{-2\pi i}{\Gamma(1/2)} \int_{0}^{\infty} \int_{\C^n} t^{-1/2} \, e^{-2\pi i(x+2at, \, y+2bt) \cdot (\xi,\eta)} \, e^{-4\pi^2 t|\zeta|^2} \, \xi_j \, \widehat{\phi} \left( \zeta \right) d\zeta \, dt = F_0(z),  
\end{align*}
completing the proof of part $(iii)$. 
\end{proof}


\subsection{Proof of Theorem \ref{thm:Lp-Riesz-twisted-Laplacian}} \label{subsec:proof-two-basic-thms-1}

We shall first prove the case of $p=2$ in Theorem \ref{thm:Lp-Riesz-twisted-Laplacian}. Not surprisingly, the Euclidean Plancherel theorem is among our main tools. The proof is short and nice and moreover provides an important ingredient in the proof of the general case of $1 < p < \infty$. 
\begin{lemma} \label{lem:thm:Lp-Riesz-twisted-Laplacian-part-p=2}
For any $k \geq 1$, the Riesz transforms $\mathcal{R}_{k, \nu, \lambda}$ are bounded on $L^2(\C^n, d \mu_\nu)$, uniformly in $\nu$ and $\lambda$. 
\end{lemma}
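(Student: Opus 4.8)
The plan is to pass, via an explicit unitary conjugation, to an estimate for the twisted Laplacian \emph{without} drift on $L^2(\C^n, dz)$, and then to prove that estimate by a repeated integration-by-parts (quadratic form) argument which absorbs all of the $\lambda$- and $\nu$-dependence into the spectral gap of $-L_\lambda$.

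\smallskip \noindent \emph{Step 1 (conjugation).} Put $h(z) = e^{a\cdot x + b\cdot y}$ and $UF = hF$; then $U \colon L^2(\C^n, d\mu_\nu) \to L^2(\C^n, dz)$ is unitary, and (as in the chain-rule computation of Subsection \ref{subsec:drift-change-with-rotation}) $U X_j(\lambda) U^{-1} = X_j(\lambda) - a_j$, $U Y_j(\lambda) U^{-1} = Y_j(\lambda) - b_j$, and $U(-L_{\nu,\lambda})U^{-1} = -L_\lambda + |\nu|^2 =: M$. Hence $\mathcal{R}_{k, \nu, \lambda}$ is unitarily equivalent to $\widetilde{P}_k(\lambda)\,M^{-k/2}$, where $\widetilde{P}_k(\lambda) = \prod_{r=1}^{k}(\theta_r(\lambda) - c_r)$, each $\theta_r(\lambda) \in \{X_1(\lambda),\dots,Y_n(\lambda)\}$ and each $c_r$ a coordinate of $\nu$, so $|c_r| \le |\nu|$. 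Expanding the product, a generic term is $\pm\bigl(\prod_{r \in S} c_r\bigr)\theta_{J}(\lambda)$, where $S \subseteq \{1,\dots,k\}$ and $\theta_J(\lambda)$ is the ordered product of the $\theta_r(\lambda)$ with $r \notin S$; write $m = k - |S|$ for its length. Since $\bigl|\prod_{r\in S} c_r\bigr| \le |\nu|^{k-m}$ and $\bigl\||\nu|^{k-m}M^{-(k-m)/2}\bigr\|_{op} = \bigl\|(|\nu|^2 M^{-1})^{(k-m)/2}\bigr\|_{op} \le 1$ (as $M \ge |\nu|^2$), applying this term to $M^{-k/2}g = M^{-m/2}M^{-(k-m)/2}g$ has $L^2$-norm at most $\|\theta_J(\lambda)M^{-m/2}\|_{op}\|g\|_2$. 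So it suffices to bound $\|\theta_J(\lambda)M^{-m/2}\|_{op}$ uniformly in $\lambda$ and $\nu$ for every monomial $\theta_J(\lambda)$ of every length $m \le k$.

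\smallskip \noindent \emph{Step 2 (quadratic form identity).} Each $\theta_j(\lambda)$ is skew-adjoint on $L^2(\C^n, dz)$ ($\partial_{x_j}$ is skew-adjoint and multiplication by $\tfrac{i\lambda}{2}y_j$ is skew-adjoint), so for $F$ in a core $\sum_{|I|=m}\|\theta_I(\lambda)F\|_2^2 = \langle S_m F, F\rangle$ with $S_m = \sum_{|I|=m}\theta_I(\lambda)^*\theta_I(\lambda) = (-1)^m\sum_{|I|=m}\theta_{i_m}(\lambda)\cdots\theta_{i_1}(\lambda)\theta_{i_1}(\lambda)\cdots\theta_{i_m}(\lambda)$. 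Using $\sum_{\theta}\theta(\lambda)^2 = L_\lambda$ and the commutation relations $[X_j(\lambda),Y_l(\lambda)] = -i\lambda\,\delta_{jl}$ (all other brackets among the generators vanishing), $[X_j(\lambda),L_\lambda] = -2i\lambda\,Y_j(\lambda)$ and $[Y_j(\lambda),L_\lambda] = 2i\lambda\,X_j(\lambda)$, one checks by induction on $m$ that
\begin{equation*}
S_m = \sum_{j=0}^{\lfloor m/2\rfloor} c_{m,j}\,\lambda^{2j}\,(-L_\lambda)^{\,m-2j},
\end{equation*}
with $c_{m,0} = 1$ and the $c_{m,j}$ depending only on $m$, $j$, $n$ (for instance $S_1 = -L_\lambda$, $S_2 = (-L_\lambda)^2 + 2n\lambda^2$, $S_3 = (-L_\lambda)^3 + (6n+4)\lambda^2(-L_\lambda)$).

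\smallskip \noindent \emph{Step 3 (conclusion).} On $L^2(\C^n, dz)$ we have $-L_\lambda \ge n|\lambda|$, hence $\lambda^{2j} \le n^{-2j}(-L_\lambda)^{2j}$; since all operators here are functions of $-L_\lambda$, this gives $S_m \le \bigl(\sum_{j} |c_{m,j}|\,n^{-2j}\bigr)(-L_\lambda)^m =: C_m\,(-L_\lambda)^m \le C_m\,M^m$. Taking $F = M^{-m/2}g$ yields $\sum_{|I|=m}\|\theta_I(\lambda)M^{-m/2}g\|_2^2 = \langle S_m M^{-m/2}g, M^{-m/2}g\rangle \le C_m\|g\|_2^2$, so $\|\theta_J(\lambda)M^{-m/2}\|_{op} \le C_m^{1/2}$ for every length-$m$ monomial, uniformly in $\lambda$ and $\nu$; with Step 1 this proves the lemma. (For $\lambda = 0$ the bound is of course immediate from the Euclidean Plancherel theorem, $\widetilde{P}_k(0)M^{-k/2}$ being a bounded Fourier multiplier; the argument above treats all $\lambda$, $\lambda = 0$ included, at once.)

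\smallskip The main obstacle is the inductive identity for $S_m$ in Step 2: one must verify that after using the commutation relations every sum over the $2n$ generators collapses back to a polynomial in $-L_\lambda$ and $\lambda^2$ of the stated weighted homogeneity, with constants independent of $\lambda$ and $\nu$. The cases $m \le 3$ already display the mechanism, the decisive algebraic input being $\sum_{j=1}^n [X_j(\lambda),Y_j(\lambda)] = -i n \lambda$.
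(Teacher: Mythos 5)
Your proof is correct, but it takes a genuinely different route from the paper's. The paper transfers the bound from the Heisenberg group: given $\phi,\psi\in C_c^\infty(\C^n)$, it builds auxiliary functions $f,g$ on $\mathbb H^n$ whose $\lambda$-Fourier transforms are $\phi H_1(\lambda),\psi H_2(\lambda)$, applies Plancherel in the $t$-variable together with Lohou\'e--Mustapha's $L^2(\mathbb H^n,d\mu_\nu)$ bound (Theorem \ref{thm:LM-Lp-Riesz-Heisenberg}), and uses the $\lambda$-continuity of $\lambda\mapsto(\mathcal{R}_{k,\nu,\lambda}\phi,\psi)$ from Lemma \ref{lem:Riesz-lambda-pointwise-continuity} to extract a pointwise bound in $\lambda$ from the integral bound. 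Your proof stays entirely on $\C^n$: conjugating by $h=e^{a\cdot x+b\cdot y}$ kills the drift, replacing $-L_{\nu,\lambda}$ by $M=-L_\lambda+|\nu|^2$ and $\theta_r(\lambda)$ by $\theta_r(\lambda)-c_r$, the scalar shifts are absorbed by $M\ge|\nu|^2$, and the remaining bound on $\|\theta_J(\lambda)M^{-m/2}\|$ follows from the identity $S_m=\sum_{|I|=m}\theta_I(\lambda)^*\theta_I(\lambda)=\sum_j c_{m,j}\lambda^{2j}(-L_\lambda)^{m-2j}$ together with the spectral gap $-L_\lambda\ge n|\lambda|$. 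Your approach is self-contained and does not invoke the Heisenberg group result or the continuity lemma, while the paper's transference machinery is reused wholesale for the $L^p$ case as well. The one place that needs fleshing out is the inductive proof of the polynomial form of $S_m$: it does hold, and the cleanest route is to set $B_r=\sum_i\theta_i(\lambda)(-L_\lambda)^r\theta_i(\lambda)$ and $C_r=\sum_j\bigl[Y_j(\lambda)(-L_\lambda)^rX_j(\lambda)-X_j(\lambda)(-L_\lambda)^rY_j(\lambda)\bigr]$, derive the coupled recursion $B_r=(-L_\lambda)B_{r-1}+2i\lambda C_{r-1}$, $C_r=(-L_\lambda)C_{r-1}-2i\lambda B_{r-1}$ (with $B_0=-(-L_\lambda)$, $C_0=in\lambda$), and observe that $S_{m+1}=-\sum_i\theta_i(\lambda)S_m\theta_i(\lambda)$ then propagates the asserted weighted-homogeneous form; this should be written out in a careful exposition.
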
 
\begin{proof} 
Since $C_c^\infty(\C^n)$ is dense in $L^2(\C^n, d \mu_\nu)$, the lemma will follow from the Riesz representation theorem once we show that there exists a constant $C > 0$, independent of $\lambda$ and $\nu$, such that 
\begin{align} \label{ineq:p=2-boundedness-1}
\left| \left( \mathcal{R}_{k, \nu, \lambda} \phi, \, \psi \right) \right| = \left| \int_{\C^n} \left( \mathcal{R}_{k, \nu,\lambda} \phi \right)(z) \, \overline{\psi (z)} \, d\mu_\nu(z) \right| \leq C \|\phi\|_{L^2(\mathbb{C}^n, \, d\mu_\nu)} \, \|\psi\|_{L^2(\mathbb{C}^n, \, d\mu_\nu)}
\end{align}
for all $\phi, \psi \in C_c^\infty(\C^n)$. 

\medskip With $\phi, \psi \in C_c^\infty(\C^n)$ fixed, let us write $G(\lambda) = \left( \mathcal{R}_{k, \nu, \lambda} \phi, \, \psi \right)$. Then, it follows from Lemma \ref{lem:Riesz-lambda-pointwise-continuity} that $G$ extends to $\mathbb{R}$ as a continuous function. Thus, in view of the Riesz representation theorem, estimate \eqref{ineq:p=2-boundedness-1} is equivalent to 
\begin{align} \label{ineq:p=2-boundedness-2}
\left| \left( G, \, H \right) \right| \leq C \|\phi\|_{L^2(\mathbb{C}^n, \, d\mu_\nu)} \, \|\psi\|_{L^2(\mathbb{C}^n, \, d\mu_\nu)} \, \|H\|_{L^1(\mathbb{R})}
\end{align}
for every real-valued function $H \in L^1(\R)$. 

\medskip We now embark on a proof of \eqref{ineq:p=2-boundedness-2}. With $H$ a real-valued function in $L^1(\R)$, let us consider the following two functions: 
$$ H_1(\lambda)= |H(\lambda)|^{1/2} \quad \textup{and} \quad H_2(\lambda)=  \left\{
\begin{array}{ll}
+|H(\lambda)|^{1/2}, & \mbox{if } H(\lambda) \geq 0 \\
-|H(\lambda)|^{1/2}, & \mbox{if } H(\lambda) < 0.
\end{array} \right. $$ 
Clearly, $H_1, H_2 \in L^2(\R)$, $H(\lambda)= H_1(\lambda) H_2(\lambda)$, and $\|H_1\|_{L^2(\mathbb{R})} = \|H_2\|_{L^2(\mathbb{R})} = \|H\|_{L^1(\mathbb{R})}^{1/2}$.

\medskip Now, define $h_1, h_2 \in L^2(\mathbb{R})$ by $\widehat{h}_1 (\lambda) = H_1 (-2\pi \lambda)$ and $\widehat{h}_2 (\lambda) = H_2 (-2\pi \lambda)$, and consider functions $f$ and $g$ on $\mathbb{H}^n$ defined by 
$$ f(z,t) = \phi(z) h_1(t) \quad \textup{and} \quad g(z,t) = \psi(z)h_2(t), $$ 
so that $ f^\lambda(z)= \phi(z) H_1 (\lambda)$ and $g^\lambda(z)= \psi(z) H_2 (\lambda).$ Clearly,  
$$\|f\|_{L^2(\mathbb{H}^n, \, d\mu_\nu)}= \|\phi\|_{L^2(\mathbb{C}^n, \, d\mu_\nu)} \|h_1\|_{L^2(\mathbb{R})} \quad \textup{and} \quad \|g\|_{L^2(\mathbb{H}^n, \, d\mu_\nu)}= \|\psi\|_{L^2(\mathbb{C}^n, \, d\mu_\nu)} \|h_2\|_{L^2(\mathbb{R})}.$$ 

\medskip Now, as mentioned in \eqref{relation:Riesz-transform-gen-form-Cn-Hn}, we can associate with $\mathcal{R}_{k, \nu, \lambda}$ the Riesz transform $\widetilde{\mathcal{R}}_{k, \nu}$ by 
$$ \widetilde{\mathcal{R}}_{k, \nu} f (z,t) = \int_{\R} \left( \mathcal{R}_{k, \nu,\lambda} f^\lambda \right) (z) \, e^{-i \lambda t} \, d\lambda, $$
and using the Plancherel theorem for the Euclidean Fourier transform in $t$-variable, we get
\begin{align*}
\left( \widetilde{\mathcal{R}}_{k, \nu} f, \, g\right) = \int_{\mathbb{H}^n} \widetilde{\mathcal{R}}_{k, \nu} f (z,t) \, \overline{g(z,t)} \, dt \, d\mu_\nu(z) & = \frac{1}{2\pi} \int_{\C^n} \int_{\R} (\widetilde{\mathcal{R}}_{k, \nu}f)^\lambda (z) \,  \overline{g^\lambda(z)} \, d\lambda \, d\mu_\nu(z) \\ 
& = \int_{\C^n} \int_{\R} \left( \mathcal{R}_{k, \nu,\lambda} f^\lambda \right)(z) \, \overline{g^\lambda(z)} \, d\lambda \, d\mu_\nu(z) \\ 
& = \int_{\R} G(\lambda) \, H_1(\lambda) \, H_2(\lambda) \, d\lambda = \int_{\R} G(\lambda) \, H(\lambda) \, d\lambda. 
\end{align*}

\medskip Thus, making use of Theorem \ref{thm:LM-Lp-Riesz-Heisenberg}, we get 
\begin{align*}
|(G, \, H)| = \left| \left( \widetilde{\mathcal{R}}_{k, \nu} f, \, g \right) \right| & \leq \left\| \widetilde{\mathcal{R}}_{k, \nu}f \right\|_{L^2(\mathbb{H}^n, \, d\mu_\nu)} \, \|g\|_{L^2(\mathbb{H}^n, \, d\mu_\nu)} \\ 
& \lesssim \|f\|_{L^2(\mathbb{H}^n, \, d\mu_\nu)} \, \|g\|_{L^2(\mathbb{H}^n, \, d\mu_\nu)} \\ 
& = \|\phi\|_{L^2(\mathbb{C}^n, \, d\mu_\nu)} \, \|h_1\|_{L^2(\mathbb{R}^n)} \, \|\psi\|_{L^2(\mathbb{C}^n, \, d\mu_\nu)} \, \|h_2\|_{L^2(\mathbb{R})} \\ 
& = \|\phi\|_{L^2(\mathbb{C}^n, \, d\mu_\nu)} \, \| \widehat{h}_1 \|_{L^2(\mathbb{R}^n)} \, \|\psi\|_{L^2(\mathbb{C}^n, \, d\mu_\nu)} \, \| \widehat{h}_2 \|_{L^2(\mathbb{R})} \\ 
& = (2 \pi)^{-1} \|\phi\|_{L^2(\mathbb{C}^n, \, d\mu_\nu)} \, \|\psi\|_{L^2(\mathbb{C}^n, \, d\mu_\nu)}  \|H\|_{L^1(\mathbb{R})}, 
\end{align*}
which is the claimed estimate \eqref{ineq:p=2-boundedness-2}. 
\end{proof} 


We are now in a position to prove Theorem \ref{thm:Lp-Riesz-twisted-Laplacian} for any $1 < p < \infty$. 

\begin{proof}[Proof of Theorem \ref{thm:Lp-Riesz-twisted-Laplacian}]
We shall be closely following the terminology of the proof of Lemma \ref{lem:thm:Lp-Riesz-twisted-Laplacian-part-p=2}. With functions $\phi, \, \psi, \, G$ and the Riesz transform $\widetilde{\mathcal{R}}_{k, \nu}$ as in the proof of Lemma \ref{lem:thm:Lp-Riesz-twisted-Laplacian-part-p=2}, it suffices to prove that there exist a constant $C>0$, independent of $\lambda$ and $\nu$, such that 
\begin{align} \label{ineq:Riesz-twisted-Laplacian-Lp-bounded}
|G (\lambda)| = |\left( \mathcal{R}_{k, \nu, \lambda} \phi, \, \psi \right)| \leq C \|\phi\|_{L^p(\C^n, \, d\mu_\nu)} \, \|\psi\|_{L^{p^\prime}(\C^n, \, d\mu_\nu)}. 
\end{align}

\medskip For any $\epsilon > 0$ and $\lambda_0 \neq 0$, consider the following functions: 
$$ f_{\epsilon, \lambda_0} (z,t) := \epsilon^{\frac{1}{2p}} \, \phi(z) \, e^{-\epsilon \, t^2} \, e^{-i \lambda_0 t} \quad \textup{and} \quad g_{\epsilon, \lambda_0} (z,t) = \epsilon^{\frac{1}{2p'}} \, \psi(z) \, e^{-\epsilon \, t^2} \, e^{-i \lambda_0 t}. $$ 
Clearly, $f_{\epsilon, \lambda_0}^{\lambda}(z) = \pi^{1/2} \, \epsilon^{\frac{1}{2p}-\frac{1}{2}} \, \phi(z) \, e^{\frac{-(\lambda-\lambda_0)^2}{4 \epsilon}}$, $g_{\epsilon, \lambda_0}^\lambda(z) = \pi^{1/2} \, \epsilon^{\frac{1}{2p'}-\frac{1}{2}} \, \psi(z) \, e^{\frac{-(\lambda-\lambda_0)^2}{4 \epsilon}}$, and 
$$ \| f_{\epsilon, \lambda_0} \|_{L^p(\mathbb{H}^n, \, d\mu_\nu)} = (\pi/p)^{\frac{1}{2p}} \, \|\phi\|_{L^p(\C^n, \, d\mu_\nu)}, \quad \| g_{\epsilon, \lambda_0} \|_{L^{p^\prime}(\mathbb{H}^n, \, d\mu_\nu)} = (\pi/p')^{\frac{1}{2p'}} \, \|\psi\|_{L^{p^\prime}(\C^n, \, d\mu_\nu)}.$$ 

\medskip Again, by the Plancherel theorem for the Euclidean Fourier transform, we get 
\begin{align*}
\left( \widetilde{\mathcal{R}}_{k, \nu} f_{\epsilon, \lambda_0}, \, g_{\epsilon, \lambda_0} \right) & = \int_{\mathbb{H}^n} \widetilde{\mathcal{R}}_{k, \nu} f_{\epsilon, \lambda_0} (z,t) \, \overline{g_{\epsilon, \lambda_0} (z,t)} \, dt \, d\mu_\nu(z) \\ 
& = \frac{1}{2\pi} \int_{\C^n} \int_{\R} (\widetilde{\mathcal{R}}_{k, \nu} f_{\epsilon, \lambda_0})^\lambda (z) \,  \overline{g_{\epsilon, \lambda_0}^\lambda(z)} \, d\lambda \, d\mu_\nu(z) \\ 
& = \int_{\C^n} \int_{\R} \left( \mathcal{R}_{k, \nu,\lambda} f_{\epsilon, \lambda_0}^\lambda \right)(z) \, \overline{g_{\epsilon, \lambda_0}^\lambda(z)} \, d\lambda \, d\mu_\nu(z) \\ 
& = \pi (2\pi)^{1/2} \int_{\R} G(\lambda) \, (2\pi\epsilon)^{-1/2} \, e^{\frac{-(\lambda-\lambda_0)^2}{2 \epsilon}} \, d\lambda. 
\end{align*}

\medskip Now, thanks to Lemmas \ref{lem:Riesz-lambda-pointwise-continuity} and \ref{lem:thm:Lp-Riesz-twisted-Laplacian-part-p=2}, we know that $G$ extends as a continuous function to $\mathbb{R}$ which is also bounded on $\mathbb{R}$. Note also that the limit as $\epsilon \to 0$ in the final expression of the above integral exists as it is nothing but the convolution of the bounded continuous function $G$ with the approximate identity $k_\epsilon$ where $k_\epsilon(\lambda) = (2\pi\epsilon)^{-1/2} \, e^{\frac{-\lambda^2}{2\epsilon}}$. Thus, we get 
\begin{align} \label{ineq:Riesz-twisted-Laplacian-poinwise-limit} 
\left( \mathcal{R}_{k, \nu, \lambda_0} \phi, \, \psi \right) = G(\lambda_0) = \lim_{\epsilon \rightarrow 0} \pi^{-1} (2\pi)^{-1/2} \left( \widetilde{\mathcal{R}}_{k, \nu}f_{\epsilon, \lambda_0}, \, g_{\epsilon, \lambda_0} \right). 
\end{align}
But, we know from Theorem \ref{thm:LM-Lp-Riesz-Heisenberg} that 
$\displaystyle \left\| \widetilde{\mathcal{R}}_{k, \nu} f \right\|_{L^p(\mathbb{H}^n, \, d\mu_\nu)} \lesssim \left\| f \right\|_{L^p(\mathbb{H}^n, \, d\mu_\nu)}$, 
with the implicit constant independent of $\nu$. Using it, inequality \eqref{ineq:Riesz-twisted-Laplacian-poinwise-limit} implies that 
\begin{align*}
\left| \left( \mathcal{R}_{k, \nu, \lambda_0} \phi, \, \psi \right) \right| \lesssim \lim_{\epsilon \to 0} \left\| f_{\epsilon, \lambda_0} \right\|_{L^p(\mathbb{H}^n, \, d\mu_\nu)} \left\| g_{\epsilon, \lambda_0} \right\|_{L^{p^\prime}(\mathbb{H}^n, \, d\mu_\nu)} \lesssim
\left\| \phi \right\|_{L^p(\C^n, \, d\mu_\nu)} \left\| \psi \right\|_{L^{p^\prime}(\C^n, \, d\mu_\nu)}
\end{align*}
which is the claimed inequality \eqref{ineq:Riesz-twisted-Laplacian-Lp-bounded} and this completes the proof of Theorem \ref{thm:Lp-Riesz-twisted-Laplacian}.
\end{proof} 


\subsection{Proof of Theorem \ref{thm:higher-order-not-uniform}} \label{subsec:proof-two-basic-thms-2}

We shall show that Theorem \ref{thm:higher-order-not-uniform} can be deduced from Theorem \ref{thm:LS-Riesz-higher-order-sharp-Euclidean}. As in the previous subsection, Lemma \ref{lem:Riesz-lambda-pointwise-continuity} provides a technical ingredient in the process of the proof. 

\begin{proof}[Proof of Theorem \ref{thm:higher-order-not-uniform}] 
With $P_k (\lambda) = (\nu \cdot \nabla (\lambda))^k$, when $k \geq 3$, the claim of the theorem is that the weak-type $(1,1)$ boundedness of the Riesz transform $\mathcal{R}_{P_k, \nu, \lambda} = P_k (\lambda) (-L_{\nu,\lambda})^{-k/2}$ is not uniform in $\lambda$ and $\nu$. Thanks to the discussion in Subsection \ref{subsec:drift-change-with-rotation} and \ref{subsec:drift-change-with-scaling}, we can assume that $\nu = e_1$ and therefore it suffices to show that with $P_k (\lambda) = X_1(\lambda)^k$ the seminorm bound of the Riesz transform $\mathcal{R}_{P_k, e_1, \lambda} = P_k (\lambda) (-L_{e_1,\lambda})^{-k/2} = X_1(\lambda)^k (L_{e_1,\lambda})^{-k/2}$ is not uniform in $\lambda \to 0$. 

\medskip On contrary, assume that $\mathcal{R}_{P_k, e_1, \lambda}$ is weak-type $(1,1)$ bounded, with the seminorm being uniform in $\lambda \to 0$. That is, 
\begin{align} \label{ineq:uniform-bound-special-Riesz-transform}
\|\mathcal{R}_{P_k, e_1, \lambda}\|_{L^{1}(\mathbb{C}^n, \, d\mu_{e_1}) \to L^{1,\infty}(\mathbb{C}^n, \, d\mu_{e_1})} \leq C 
\end{align} 
with the constant $C$ being independent of $\lambda$. 

\medskip Let us denote by $\mathcal{R}_{D_k}$ the Riesz transform $D_k (-\Delta_{e_1})^{-k/2}$ corresponding to the operator $D_k = \partial_{x_1}^k $. Then, by part $(iii)$ of Lemma \ref{lem:Riesz-lambda-pointwise-continuity}, we have the following pointwise convergence: 
\begin{align} \label{eq:lambda-to-0-pointwise-limit}
\lim_{\lambda \to 0} \mathcal{R}_{P_k, e_1, \lambda} \phi (z) = \mathcal{R}_{D_k} \phi (z),  
\end{align} 
for every function $\phi \in C_c^\infty \left( \C^n \right)$. 

\medskip Making use of \eqref{eq:lambda-to-0-pointwise-limit} and Fatou's Lemma, we get 
\begin{align*} 
\left\|\mathcal{R}_{D_k} \phi \right\|_{L^{1,\infty}(\mathbb{C}^n, \, d\mu_{e_1})} = \left\| \lim_{\lambda \to 0} \left| \mathcal{R}_{P_k, e_1, \lambda} \phi \right| \right\|_{L^{1,\infty}(\mathbb{C}^n, \, d\mu_{e_1})} &\leq \liminf_{\lambda \to 0} \left\|\mathcal{R}_{P_k, e_1, \lambda} \phi \right\|_{L^{1,\infty}(\mathbb{C}^n, \, d\mu_{e_1})} \\ 
&\leq C \|\phi\|_{L^1(\mathbb{C}^n, \, d\mu_{e_1})}, 
\end{align*}
where the last inequality follows from the assumption \eqref{ineq:uniform-bound-special-Riesz-transform}. 

\medskip Thus, we get that 
\begin{align} \label{ineq:uniform-bound-special-Riesz-transform-2}
\left\|\mathcal{R}_{D_k} \phi \right\|_{L^{1,\infty}(\mathbb{C}^n, \, d\mu_{e_1})} \leq C \|\phi\|_{L^1(\mathbb{C}^n, \, d\mu_{e_1})}. 
\end{align}
But, with $k \geq 3$, Theorem \ref{thm:LS-Riesz-higher-order-sharp-Euclidean} asserts that inequality \eqref{ineq:uniform-bound-special-Riesz-transform-2} is not possible. Therefore, inequality \eqref{ineq:uniform-bound-special-Riesz-transform} is not possible, completing the claim of Theorem \ref{thm:higher-order-not-uniform}. 
\end{proof}


\section{Kernel estimates and Proofs of Theorems \ref{thm:higher-order-Riesz-uniform-1-1-log-space} and \ref{thm:higher-order-bdd-drift}} \label{sec:kernel-estimates}

In view of the discussion from Subsections \ref{subsec:drift-change-with-rotation} and \ref{subsec:drift-change-with-scaling}, from now on we shall only work with $\nu = e_1$. Over the next two subsections, we establish some local and global estimates of the integral kernel $\mathcal{R}_{k, e_1, \lambda}(z,w)$ which are then used in the proofs of Theorems \ref{thm:higher-order-Riesz-uniform-1-1-log-space} and \ref{thm:higher-order-bdd-drift} in the final Subsections \ref{subsec:proof-two-kernel-estimates-thms-1} and \ref{subsec:proof-two-kernel-estimates-thms-2}. We also record here the following asymptotics of the $\mu_{e_1}$-measure of balls (see (2.10) in \cite{Li-Sjogren-Wu-drift-Euclidean-Math-Z-2016}): 
\begin{align} \label{ineq:ball-volume-asymptote}
\mu_{e_1}(B(z,r)) \sim \left\{
\begin{array}{ll}
r^{2n} \, e^{2x_1} , & \mbox{if } r \leq 1 \\
r^{\frac{2n-1}{2}} \, e^{2(x_1+r)} ,& \mbox{if } r >1.
\end{array} \right. 
\end{align}

\medskip We also record here the following basic estimates which shall be used frequently:  
\begin{align} \label{estimates:hyperbolic-trignometric}
& |\sinh (s)| \geq |s|, \, \cosh (s) \geq 1, \, |s \, \coth (s)| \geq 1, \, \cosh (s) \sim e^{|s|}, \\ 
& \nonumber \lim_{s \to 0} s \, \coth (s) =  \lim_{|s| \to \infty} \coth (s) = 1, \, \text{and} \, \sup_{r>0} r^N e^{-r} < \infty \, \text{for any} \, N \in \mathbb{N}. 
\end{align}

\medskip It is convenient to work with the gradient vector fields in the complex form  
\begin{align} \label{def:standard-scaled-complex-vector-fields-twisted-laplacian}
Z_{j}(\lambda) &= \frac{1}{2} \left( X_j (\lambda) - i \, Y_j (\lambda) \right) = \frac{\partial}{\partial z_{j}} - \frac{\lambda}{4} \bar{z}_j, \\ 
\nonumber \textup{and} \qquad \bar{Z}_j(\lambda) &= \frac{1}{2} \left( X_j (\lambda) + i \, Y_j (\lambda) \right) = \frac{\partial}{\partial \bar{z_{j}}} + \frac{\lambda}{4} z_{j}. 
\end{align} 

\medskip Since $X_j (\lambda)$ and $Y_j (\lambda)$ can be expressed in terms of $Z_j(\lambda)$ and $\bar{Z}_j(\lambda)$, it suffices to work with a Riesz transform $\mathcal{R}_{P_k, e_1, \lambda}$ of the following form: 
\begin{align} \label{specific-form:Riesz-transform-sec:kernel-estimates}
\mathcal{R}_{P_k, e_1, \lambda} = P_k(\lambda) (-L_{e_1,\lambda})^{-k/2}, 
\end{align} 
where $P_k (\lambda)$ is a finite linear combination of monomials of the type $\theta_1(\lambda) \theta_2(\lambda) \cdots \theta_{k}(\lambda)$ with $\theta_j(\lambda)$ coming from the set $\{ Z_1(\lambda), \ldots, Z_n(\lambda), \bar{Z}_1(\lambda), \ldots, \bar{Z}_n(\lambda) \}$ for every $1 \leq j \leq k$. And, as earlier, for one such monomial at a time, we denote the associated Riesz transform by $\mathcal{R}_{k, e_1, \lambda}$.

\medskip It follows from the straightforward calculations that the kernel $\mathcal{R}_{k, e_1, \lambda}(z,w)$ is a finite linear combination of terms of the form 
\begin{align} \label{eq:general_term_Riesz_transform}
& \int_{0}^{\infty} t^{\frac{k}{2}-1} e^{-t-x_1-u_1} \left( \frac{\lambda}{\sinh (\lambda t)} \right)^n e^{-\frac{1}{4} \lambda \coth (\lambda t) \, |z-w|^2} \\ 
\nonumber & \qquad \left( \prod_{j=1}^n \lambda^{l_j} (\coth (\lambda t))^{d_j} \{\lambda (z_j-w_j)\}^{p_j} \{\lambda (\bar{z}_j-\bar{w}_j)\}^{q_j}\right) e^{\frac{i\lambda}{2} \Im(z \cdot \bar{w})} \, dt,   
\end{align}
where $l_j \leq k_j/2, \, d_j \leq l_j+p_j+q_j$, and $p_j+q_j \leq k_j-2l_j$. 


\subsection{Local estimates} \label{subsec:local-estimates}  
In this section, we estimate the integral kernel $\mathcal{R}_{k, e_1, \lambda}(z,w)$ and its derivatives in the local region, that is, when $|z-w| < 2$. 

\begin{lemma} \label{lem:ker-estimates}
The following estimates hold true uniformly in the region $|z-w| < 2$: 
\begin{align} 
|\mathcal{R}_{k, e_1, \lambda}(z,w)| & \lesssim \frac{1}{\mu_{e_1}(B(z,|z-w|))}, 
\label{ineq:ker-local-estimate} \\    
\sum_{j=1}^n \left\{ |Z_j \mathcal{R}_{k, e_1, \lambda}(z,w)| + |\bar{Z_j} \mathcal{R}_{k, e_1, \lambda}(z,w)| \right\} & \lesssim \frac{1}{|z-w| \, \mu_{e_1}(B(z,|z-w|))} 
\label{ineq:grad-local}, 
\end{align}
with the implicit constants independent of $\lambda.$
\end{lemma}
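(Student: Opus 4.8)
The strategy is to work directly from the integral representation \eqref{eq:general_term_Riesz_transform} for $\mathcal{R}_{k, e_1, \lambda}(z,w)$ and its first-order derivatives, and to extract the local estimates by splitting the $t$-integral at $t = |z-w|^2$. Throughout we are in the regime $|z-w| < 2$, so $e^{-x_1 - u_1} = e^{-2x_1} \, e^{x_1 - u_1} \sim e^{-2x_1}$ with constants independent of everything; combined with the small-$r$ case of \eqref{ineq:ball-volume-asymptote}, which gives $\mu_{e_1}(B(z,|z-w|)) \sim |z-w|^{2n} \, e^{2x_1}$, the target bound \eqref{ineq:ker-local-estimate} is equivalent to showing that the $t$-integral (with the $e^{-x_1-u_1}$ factor stripped) is $\lesssim |z-w|^{-2n} \, e^{-2x_1}$, i.e.\ that $\int_0^\infty (\cdots)\,dt \lesssim |z-w|^{-2n}$ after the cancellation of the exponential weights.

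First I would handle the small-time piece $0 < t < |z-w|^2$. Using the elementary bounds \eqref{estimates:hyperbolic-trignometric} — in particular $|\sinh(\lambda t)| \geq |\lambda t|$, so $\left(\frac{\lambda}{\sinh(\lambda t)}\right)^n \leq t^{-n}$, and $|\lambda\coth(\lambda t)| \geq t^{-1}$ together with $\coth$ bounded below by $1$ in absolute value — one dominates the product of the polynomial factors $\prod_j \lambda^{l_j}(\coth(\lambda t))^{d_j}\{\lambda(z_j-w_j)\}^{p_j}\{\lambda(\bar z_j - \bar w_j)\}^{q_j}$ by a constant times $t^{-(\sum l_j + \sum d_j)} \, |z-w|^{\sum(p_j+q_j)}$ times a power of $(\lambda\coth(\lambda t))$ absorbed into the Gaussian. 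More cleanly, I would group each factor $\lambda(z_j - w_j)$ with a half-power of $\lambda\coth(\lambda t)$ and use $|\lambda\coth(\lambda t)|^{1/2}|z-w| \, e^{-c\lambda\coth(\lambda t)|z-w|^2} \lesssim 1$, and absorb each surviving $\lambda^{l_j}(\coth(\lambda t))^{d_j}$ into $(\lambda\coth(\lambda t))^{\text{power}}$ controlled by powers of $t^{-1}$ against the Gaussian $e^{-\frac14\lambda\coth(\lambda t)|z-w|^2}$; since the constraints $l_j \leq k_j/2$, $d_j \leq l_j + p_j + q_j$, $p_j + q_j \leq k_j - 2l_j$ exactly match the homogeneity, what remains is $\int_0^{|z-w|^2} t^{\frac k2 - 1 - n - \frac k2}\,dt = \int_0^{|z-w|^2} t^{-n-1}\,dt$ — wait, this diverges, so the Gaussian $e^{-\frac14\lambda\coth(\lambda t)|z-w|^2} \leq e^{-c|z-w|^2/t}$ (using $\lambda\coth(\lambda t) \geq 1/t$... more precisely $\lambda t\coth(\lambda t)\ge 1$) must be kept and used as $(|z-w|^2/t)^{-N} \lesssim 1$ for large $N$ to kill the singularity, producing $t^{N-n-1}|z-w|^{-2N}$ and hence, after integrating up to $|z-w|^2$, a clean $|z-w|^{-2n}$.

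For the large-time piece $t > |z-w|^2$ I would instead use $\coth(\lambda t) \to 1$, $\frac{\lambda}{\sinh(\lambda t)} \sim \lambda e^{-|\lambda|t} \lesssim t^{-1}$, and $|\lambda(z_j-w_j)| \le |z-w|/t^{1/2}$-type bounds after again pairing with half-powers of $\lambda\coth(\lambda t)$; crucially the factor $e^{-t}$ provides decay, and $e^{-\frac14\lambda\coth(\lambda t)|z-w|^2} \leq 1$. Estimating the polynomial factors by their maximal homogeneity and using $|z-w| < 2$ bounds them by a constant, and the integral becomes $\lesssim \int_{|z-w|^2}^\infty t^{\frac k2 - 1 - n} \, e^{-t}\,(\cdots)\,dt$, which for the range $t>|z-w|^2 \ge$ small is $\lesssim \int_{|z-w|^2}^\infty t^{-n-1}\,dt \sim |z-w|^{-2n}$ when the exponent is $\le -n-1$, and is $O(1) \lesssim |z-w|^{-2n}$ (since $|z-w|<2$) otherwise — either way the bound \eqref{ineq:ker-local-estimate} follows. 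The derivative estimate \eqref{ineq:grad-local} is obtained the same way: applying $Z_j(\lambda)$ or $\bar Z_j(\lambda)$ to \eqref{eq:general_term_Riesz_transform} either differentiates the Gaussian, bringing down a factor $\lambda\coth(\lambda t)(z_j - w_j)$ (or its conjugate) of size $\lesssim |z-w|^{-1}$ against the Gaussian after pairing with $(\lambda\coth(\lambda t))^{1/2}$, or differentiates a polynomial factor $\{\lambda(z_j-w_j)\}^{p_j}$, producing $\lambda(\coth(\lambda t))^0 \lesssim t^{-1}|z-w|$ extra — wait more carefully it produces a term with one fewer power of $(z_j-w_j)$ and an extra $\lambda$, which against the Gaussian again costs a factor $\lesssim |z-w|^{-1}$ — or differentiates the twisting phase $e^{\frac{i\lambda}{2}\Im(z\cdot\bar w)}$, bringing down $\lambda \bar w_j \sim \lambda z_j + \lambda(\bar w_j - \bar z_j)$, the first part of which combines with the $-\frac\lambda4\bar z_j$ in $Z_j(\lambda)$ and must be checked to be harmless, and the second part of which is again a Gaussian-controlled factor of size $\lesssim |z-w|^{-1}$. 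In all cases one extra factor of $|z-w|^{-1}$ appears relative to the kernel bound, giving \eqref{ineq:grad-local}.

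\textbf{Main obstacle.} The delicate point is the bookkeeping in the small-time regime: one must verify that the exponents produced by the worst-case choice of $(l_j, d_j, p_j, q_j)$ subject to $l_j \leq k_j/2$, $d_j \leq l_j + p_j + q_j$, $p_j + q_j \leq k_j - 2l_j$ never force a divergent $t$-integral after the Gaussian cancellation — equivalently, that every power of $\coth(\lambda t)$ and of $\lambda$ beyond the homogeneity can be paid for by the Gaussian $e^{-\frac14\lambda\coth(\lambda t)|z-w|^2}$ using $(\lambda t\coth(\lambda t)) \geq 1$ and $r^N e^{-r}$ bounded. A clean way to organize this, which I would adopt, is to substitute $s = \frac14\lambda\coth(\lambda t)|z-w|^2$ on the small-time part and track homogeneity in $|z-w|$ directly, so that the $t$-integral visibly reduces to $\int_0^{\sim 1} s^{\text{(nonneg)}} e^{-s}\,ds < \infty$ times the correct power $|z-w|^{-2n}$. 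The second, milder obstacle is checking that the $\lambda z_j$-type terms arising from differentiating the phase or from the $-\frac\lambda4\bar z_j$ part of $Z_j(\lambda)$ do not spoil uniformity in $\lambda$; this is where one uses that such terms are not absolutely bounded but combine — the standard observation that $Z_j(\lambda)$ applied to the heat-kernel-type object only ever produces $\lambda\coth(\lambda t)(z_j - w_j)$ and $\lambda(\bar w_j - \bar z_j)$ in the combination already displayed in \eqref{eq:general_term_Riesz_transform}, never a bare $\lambda z_j$.
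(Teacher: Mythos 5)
Your plan follows the paper's proof essentially step for step: reduce to a generic term of the form \eqref{eq:general_term_Riesz_transform}, split the $t$-integral at $t=|z-w|^2$, use $|\sinh(\lambda t)|\geq|\lambda t|$ and $|\lambda t\coth(\lambda t)|\geq 1$ to bound the prefactors, and spend a suitable power of the Gaussian $e^{-\frac14\lambda\coth(\lambda t)|z-w|^2}$ as $(|z-w|^2/t)^{-N}$ to kill the $t^{-n-1}$ singularity in the small-time piece, while the $e^{-t}$ decay handles the large-time piece; the gradient estimate is then the same computation with the index constraints shifted by one, and you correctly flag the cancellation of the raw $\lambda z_j$ terms. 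Your suggestion of the substitution $s=\frac14\lambda\coth(\lambda t)|z-w|^2$ is a cosmetic reorganization, not a different route, and the brief mid-argument stumble ("wait, this diverges") is self-corrected to exactly the paper's mechanism, so there is no gap.
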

\begin{proof} 
\underline{Proof of \eqref{ineq:ker-local-estimate}}: Note that the term of the type \eqref{eq:general_term_Riesz_transform} is dominated by $E_{\lambda} (z,w)$, where 
\begin{align} 
\label{eq:general_term_Riesz_transform-2} 
E_{\lambda} (z,w) & = e^{-2 x_1} \int_{0}^{\infty} t^{\frac{k}{2}-1} \, e^{-t} \, e^{x_1-u_1} \left(\frac{\lambda}{\sinh (\lambda t)} \right)^{n} e^{-\frac{1}{4} \lambda \coth (\lambda t) |z-w|^2} \\ 
\nonumber & \qquad \qquad \{\lambda \coth (\lambda t)\}^l  \{\lambda \coth (\lambda t) |z-w| \}^m \, dt
\end{align} 
with $0 \leq l \leq k/2$ and $0 \leq m \leq k-2l.$
We decompose $E_\lambda (z,w)$ as the sum of $E_{\lambda, 1} (z,w)$ and $E_{\lambda, 2} (z,w)$ and estimate them separately as follows. Remember also that we are working under the assumption that $|z-w| < 2$. 
\begin{align*}
E_{\lambda,1} (z,w) & = e^{-2 x_1} \int_{0}^{|z-w|^2} t^{\frac{k}{2}-1} \, e^{-t} \, e^{x_1-u_1} \left(\frac{\lambda}{\sinh (\lambda t)} \right)^{n} e^{-\frac{1}{4} \lambda \coth (\lambda t) |z-w|^2} \\ 
& \qquad \qquad \{\lambda \coth (\lambda t)\}^l \{\lambda \coth (\lambda t) |z-w| \}^m \, dt \\ 
& \lesssim_k e^{-2 x_1} \int_{0}^{|z-w|^2} t^{\frac{k}{2}-1} \, e^{-t} \, t^{-n-l-m} \, (|z-w|^2 /t)^{-(n+1+m/2)} \, |z-w|^m \\ 
& \qquad \qquad \left(\frac{\lambda t}{\sinh (\lambda t)} \right)^{n} \{(\lambda t) \coth (\lambda t)\}^{l+m} \, dt.
\end{align*}
Now, we can make use of the fact that $\lambda t / \sinh (\lambda t) $ and $(\lambda t) \coth (\lambda t)$ are both bounded when $|\lambda t|$ is small, whereas when $|\lambda t|$ is large then $|\sinh (\lambda t)|$ grows like $e^{|\lambda t|}$ and $|\coth (\lambda t)| \lesssim 1$. In summary, $\displaystyle \sup_{\lambda, t} \left(\frac{\lambda t}{\sinh (\lambda t)} \right)^{n} \{(\lambda t) \coth (\lambda t)\}^{l+m} \leq C_{k} < \infty$. Therefore, we get 
\begin{align*}
E_{\lambda,1} (z,w) & \lesssim_k e^{-2 x_1} \int_{0}^{|z-w|^2} t^{\frac{k}{2}-1} \, e^{-t} \, t^{-n-l-m} \, (|z-w|^2 /t)^{-(n+1+m/2)} \, |z-w|^{m} \, dt \\ 
& = \frac{e^{-2 x_1}}{|z-w|^{2n+2}} \int_{0}^{|z-w|^2} t^{\frac{1}{2} (k-m-2l)} \, e^{-t} \, dt \\ 
& \lesssim \frac{e^{-2 x_1}}{|z-w|^{2n+2}} \int_{0}^{|z-w|^2} dt \lesssim \frac{e^{-2 x_1}}{|z-w|^{2n}} \sim \frac{1}{\mu_{e_1}(B(z,|z-w|))}. 
\end{align*}

\medskip Similarly, 
\begin{align*}
E_{\lambda, 2} (z,w) & = e^{-2 x_1} \int_{|z-w|^2}^{\infty} t^{\frac{k}{2}-1} \, e^{-t} \, e^{x_1-u_1} \left(\frac{\lambda}{\sinh (\lambda t)} \right)^{n} e^{-\frac{1}{4} \lambda \coth (\lambda t) |z-w|^2} \\ 
& \qquad \qquad \{\lambda \coth (\lambda t)\}^l  \{\lambda \coth (\lambda t) |z-w| \}^m \, dt \\ 
& \lesssim e^{-2 x_1} \int_{|z-w|^2}^{\infty} t^{\frac{k}{2}-1} \, e^{-t} \, t^{-n-l-m} \, (|z-w|^2 /t)^{-n-(m-1)/2} \, |z-w|^{m} \, dt \\ 
& = \frac{e^{-2 x_1}}{|z-w|^{2n-1}} \int_{|z-w|^2}^{\infty} t^{-3/2} \, t^{\frac{1}{2}(k-m-2l)} \, e^{-t} \, dt \\ 
& \lesssim \frac{e^{-2 x_1}}{|z-w|^{2n-1}} \int_{|z-w|^2}^{\infty} t^{-3/2} \, dt \lesssim \frac{e^{-2 x_1}}{|z-w|^{2n}} \sim \frac{1}{\mu_{e_1}(B(z,|z-w|))}. 
\end{align*} 
Above estimations of $E_{\lambda, 1} (z,w)$ and $E_{\lambda, 2} (z,w)$ together imply \eqref{ineq:ker-local-estimate}. 

\medskip \noindent \underline{Proof of \eqref{ineq:grad-local}}: We shall only work with the term $Z_j \mathcal{R}_{k, e_1, \lambda}(z,w)$ and other terms can be estimated analogously. It is easy to see from \eqref{eq:general_term_Riesz_transform} that a general term in $Z_j \mathcal{R}_{k, e_1, \lambda}(z,w)$ is a finite linear combination of terms of the same form as in \eqref{eq:general_term_Riesz_transform}, but this time with the condition on indices being $0 \leq l_j \leq (k_j+1)/2, \, d_j \leq l_j+p_j+q_j$, and $p_j+q_j \leq (k_j+1)-2l_j$. Each such term is dominated by $\widetilde{E}_{\lambda} (z,w)$, where 
\begin{align*}
\widetilde{E}_{\lambda} (z,w) & = e^{-2 x_1} \int_{0}^{\infty} t^{\frac{k}{2}-1} \, e^{-t} \, e^{x_1-u_1} \left(\frac{\lambda}{\sinh (\lambda t)} \right)^{n} e^{-\frac{1}{4} \lambda \coth (\lambda t) |z-w|^2} \\ 
& \qquad \qquad \{\lambda \coth (\lambda t)\}^l  \{\lambda \coth (\lambda t) |z-w| \}^m \, dt
\end{align*}
with $0 \leq l \leq (k+1)/2$ and $0 \leq m \leq k+1-2l.$ 

\medskip One can repeat the calculations from the proof of \eqref{ineq:ker-local-estimate} verbatim to show that 
\begin{align*}
\widetilde{E}_{\lambda} (z,w) \lesssim \frac{e^{-2 x_1}}{|z-w|^{2n+1}} \sim \frac{1}{|z-w| \, \mu_{e_1}(B(z,|z-w|))}, 
\end{align*}
thus establishing \eqref{ineq:grad-local}. 
\end{proof} 


\subsection{Estimates at infinity} \label{subsec:estimates-at-infinity} 
In this section, we estimate the integral kernel $\mathcal{R}_{k, e_1, \lambda}(z,w)$ in the region $|z-w| > 1$. 

\begin{lemma} \label{lem:ker-estimate-global}
The integral kernel $\mathcal{R}_{k, e_1, \lambda}(z,w)$ satisfies the following estimate uniformly in the region $|z-w| > 1$: 
\begin{align} \label{eq:global-estimate-for-bdd-drift-theorem}
|\mathcal{R}_{k, e_1, \lambda}(z,w)| \lesssim e^{-2x_1} \, e^{|z-w|} \, e^{-\frac{|\lambda|}{8} |z-w|^2}, 
\end{align} 
with the implicit constants being independent of $\lambda.$ 
\end{lemma}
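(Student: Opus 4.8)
The plan is to recycle the pointwise domination of the kernel already set up in the proof of Lemma~\ref{lem:ker-estimates}. Recall from there that $|\mathcal{R}_{k, e_1, \lambda}(z,w)|$ is dominated by a finite sum of terms $E_\lambda(z,w)$ of the form \eqref{eq:general_term_Riesz_transform-2}, with $0 \le l \le k/2$ and $0 \le m \le k-2l$, so it suffices to prove \eqref{eq:global-estimate-for-bdd-drift-theorem} for one such $E_\lambda(z,w)$ with constants depending only on $k$ and $n$. Since $\lambda\coth(\lambda t)$ and $\lambda/\sinh(\lambda t)$ are even in $\lambda$, I may replace $\lambda$ by $|\lambda|$ everywhere; set $r := |z-w| > 1$ and $\rho := \rho(t) = |\lambda|\coth(|\lambda|t) > 0$. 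In $E_\lambda$ the $x_1$-dependent exponentials combine as $e^{-2x_1} e^{x_1-u_1} = e^{-x_1-u_1}$, and $e^{x_1-u_1} \le e^{|x_1-u_1|} \le e^{r}$; pulling $e^{-2x_1} e^{r}$ outside, the assertion reduces to the time-integral bound
\begin{align*}
J := \int_0^\infty t^{\frac{k}{2}-1}\, e^{-t}\left(\frac{|\lambda|}{\sinh(|\lambda|t)}\right)^{n} e^{-\frac14 \rho r^2}\, \rho^{l}\, (\rho r)^{m}\, dt \;\lesssim\; e^{-\frac{|\lambda|}{8} r^2}.
\end{align*}

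The key move is to peel off the claimed Gaussian and then absorb all remaining polynomial growth into what is left of the exponential decay. Using $\coth s \ge 1$ for $s>0$, we have $\rho \ge |\lambda|$, hence $e^{-\frac14\rho r^2} \le e^{-\frac18\rho r^2}\, e^{-\frac18|\lambda| r^2}$, and the factor $e^{-\frac18|\lambda| r^2} = e^{-\frac{|\lambda|}{8} r^2}$ comes out of the integral. It remains to show that $\int_0^\infty t^{\frac{k}{2}-1} e^{-t}\big(\tfrac{|\lambda|}{\sinh(|\lambda|t)}\big)^{n} e^{-\frac18\rho r^2}\rho^{l}(\rho r)^{m}\, dt \lesssim 1$. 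Writing $e^{-\frac18\rho r^2} = \big(e^{-\frac{1}{24}\rho r^2}\big)^3$ and spending one factor at a time: first, with $u := \rho r^2$ and $r>1$, one has $(\rho r)^m = u^m r^{-m} \le u^m$, so $(\rho r)^m e^{-\frac{1}{24}\rho r^2} \le u^m e^{-u/24} \lesssim 1$; second, since $r^2>1$, $\rho^l e^{-\frac{1}{24}\rho r^2} \le \rho^l e^{-\rho/24} \lesssim 1$; third, since $s\coth s \ge 1$ (see \eqref{estimates:hyperbolic-trignometric}) we have $\rho \ge 1/t$, and since $\sinh s \ge s$ we have $\big(\tfrac{|\lambda|}{\sinh(|\lambda|t)}\big)^{n} \le t^{-n}$, so $\big(\tfrac{|\lambda|}{\sinh(|\lambda|t)}\big)^{n} e^{-\frac{1}{24}\rho r^2} \le t^{-n} e^{-r^2/(24t)} = r^{-2n}\,(r^2/t)^{n} e^{-r^2/(24t)} \lesssim 1$. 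What survives is $\int_0^\infty t^{\frac{k}{2}-1} e^{-t}\, dt = \Gamma(k/2) < \infty$, which completes the estimate and hence, tracing back, the lemma.

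I expect the only delicate point to be the third of these three bounds. The heat-kernel normalization $\big(\tfrac{|\lambda|}{\sinh(|\lambda|t)}\big)^{n}$ contributes a non-integrable singularity $t^{-n}$ as $t \to 0^+$, and it is genuinely necessary to retain some Gaussian decay all the way to the end in order to absorb it; this is exactly where the hypothesis $|z-w|>1$ enters, since it forces $e^{-\frac{1}{24}\rho r^2} \le e^{-\frac{1}{24}\rho} \le e^{-1/(24 t)}$ near $t=0$, and it is also the source of the uniformity in $\lambda$, through the $\lambda$-free lower bounds $\rho \ge |\lambda|$ and $\rho \ge 1/t$. Everything else is routine bookkeeping with the elementary inequalities collected in \eqref{estimates:hyperbolic-trignometric}.
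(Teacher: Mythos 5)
Your proof is correct, and it takes a genuinely different route from the paper's. The paper splits the $t$-integral at $t = 1/|\lambda|$ into $I_{\lambda,1}+I_{\lambda,2}$ and then argues differently in the two regimes: on $(0,1/|\lambda|)$ the claimed Gaussian factor $e^{-\frac{|\lambda|}{8}|z-w|^2}$ is extracted from $e^{-\frac{1}{8t}|z-w|^2}$ via $1/t>|\lambda|$, while on $(1/|\lambda|,\infty)$ it is extracted from $\coth\ge 1$, and in each regime the surviving half of the Gaussian absorbs the polynomial factors through a single bound of the form $v^{N}e^{-cv}\lesssim 1$. You instead work over all of $(0,\infty)$ at once: you observe that $\rho=|\lambda|\coth(|\lambda|t)$ satisfies both $\rho\ge|\lambda|$ and $\rho\ge 1/t$ simultaneously, peel off $e^{-\frac{|\lambda|}{8}r^2}$ once and for all using the first bound, and then split the remaining decay $e^{-\frac18\rho r^2}$ into three equal pieces, each of which kills one of $(\rho r)^m$, $\rho^{l}$, $\big(\tfrac{|\lambda|}{\sinh(|\lambda|t)}\big)^n$ using only the elementary estimates of \eqref{estimates:hyperbolic-trignometric} together with $r>1$. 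I checked the three absorption bounds and the evenness of $\lambda\mapsto\lambda\coth(\lambda t)$, $\lambda\mapsto\lambda/\sinh(\lambda t)$, as well as the domination $e^{x_1-u_1}\le e^{|z-w|}$; all are fine, and the leftover integral is exactly $\Gamma(k/2)<\infty$. Your approach is slightly more streamlined because it avoids the case distinction, at the modest cost of tracking three separate exponential "budgets"; the paper's two-regime decomposition is perhaps more natural if one already has the local/global heuristic from Lemma~\ref{lem:ker-estimates} in mind. Either way, the decisive ingredient is the same in both proofs: the uniform-in-$\lambda$ lower bound $\coth\ge1$ supplies the target Gaussian, while the hypothesis $|z-w|>1$ is what lets the remaining Gaussian decay tame the $t^{-n}$ singularity of the heat-kernel normalization at $t\to 0^+$.
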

\begin{proof}
It suffices to estimate term $E_{\lambda} (z,w)$ given by \eqref{eq:general_term_Riesz_transform-2}. This time we decompose $E_\lambda (z,w)$ as the sum of two terms $I_{\lambda, 1} (z,w)$ and $I_{\lambda, 2} (z,w)$ and estimate them separately as follows. In doing so, we shall again make use of the basic estimates \eqref{estimates:hyperbolic-trignometric}. Remember also that here we are working under the assumption that $|z-w| > 1$. 
\begin{align*}
I_{\lambda,1}(z, w) & = e^{-2 x_1} \int_{0}^{1/|\lambda|} t^{\frac{k}{2}-1} \, e^{-t} \, e^{x_1-u_1} \left(\frac{\lambda}{\sinh (\lambda t)} \right)^{n} e^{-\frac{1}{4} \lambda \coth (\lambda t) |z-w|^2} \\ 
& \qquad \qquad \{\lambda \coth (\lambda t)\}^l  \{\lambda \coth (\lambda t) |z-w| \}^m \, dt \\ 
& \lesssim e^{-2 x_1} \, e^{|z-w|} \int_{0}^{1/|\lambda|} t^{\frac{k}{2}-1} \, e^{-t} \, t^{-n-l-m} \, e^{-\frac{1}{8t}|z-w|^2} \, (|z-w|^2 / t)^{-n-l-m} \, |z-w|^{m} \, dt \\ 
& \lesssim \frac{e^{-2 x_1} \, e^{|z-w|}}{|z-w|^{2n+2l+m}} \, e^{-\frac{|\lambda|}{8} |z-w|^2} \int_{0}^{1/|\lambda|}  t^{k/2-1} \, e^{-t} \, dt \\ 
& \lesssim e^{-2 x_1} \, e^{|z-w|} \, e^{-\frac{|\lambda|}{8} |z-w|^2}. 
\end{align*}

\medskip Similarly, 
\begin{align*}
I_{\lambda,2}(z,w) & = e^{-2 x_1} \int_{1/|\lambda|}^{\infty} t^{\frac{k}{2}-1} \, e^{-t} \, e^{x_1-u_1} \left(\frac{\lambda}{\sinh (\lambda t)} \right)^{n} e^{-\frac{1}{4} \lambda \coth (\lambda t) |z-w|^2} \\ 
& \qquad \qquad \{\lambda \coth (\lambda t)\}^l  \{\lambda \coth (\lambda t) |z-w| \}^m \, dt \\ 
& \lesssim e^{-2 x_1} \, e^{|z-w|} \, |\lambda|^{n+l+m} \int_{1/|\lambda|}^\infty t^{k/2-1} \, e^{-t} \, e^{-\frac{|\lambda|}{8}|z-w|^2} \, (|\lambda| |z-w|^2)^{-n-l-m} \, |z-w|^{m} \, dt \\ 
& = e^{-2 x_1} \, e^{|z-w|} \, e^{-\frac{|\lambda|}{8} |z-w|^2} \, \frac{1}{|z-w|^{2n+2l+m}} \int_{1/|\lambda|}^\infty t^{k/2-1} \, e^{-t} \, dt \, \\ 
& \lesssim e^{-2 x_1} \, e^{|z-w|} \, e^{-\frac{|\lambda|}{8} |z-w|^2}. 
\end{align*}
This completes the proof of the claimed estimate \eqref{eq:global-estimate-for-bdd-drift-theorem}.
\end{proof}

Now, following \cite{Li-Sjogren-drift-sharp-endpoint-Euclidean-Canad-2021} (see identity (3.2) in \cite{Li-Sjogren-drift-sharp-endpoint-Euclidean-Canad-2021}), let us consider the following kernel: 
\begin{align} \label{def-kernel-Vk}
\mathcal{V}_k(z,w)= e^{-2x_1} \, |z-w|^{\frac{k-2n-1}{2}} \exp \left(-\frac{1}{4}\frac{|z^{\prime}-w^{\prime}|^2}{|z-w|}\right) \chi_{\{x_1-u_1>1\}},  
\end{align} 
where we are using the notation $z^\prime$ to denote the $(2n-1)$-coordinate vector obtained by removing the first coordinate $x_1$ from $z$, and define the operator
\begin{align} \label{def-operator-Vk}
\mathcal{V}_k f (z) = \int_{\C^n} \mathcal{V}_k(z,w) \, f(w) \, d\mu_{e_1}(w).  
\end{align} 

\medskip We shall make use of the following integral estimate (see Lemma 2.3 in \cite{Li-Sjogren-drift-sharp-endpoint-Euclidean-Canad-2021}). For any $s \in \mathbb{R}$ and $a>0$, let
$$ B_s(a) = \int_0^{+\infty} t^{s-1} \, e^{- t} \, e^{-\frac{a^2}{4 t}} \, dt. $$
We have 
\begin{align} \label{estimate:gamma-type-integral}
B_s(a) = \sqrt{2\pi} \, 2^{-s} \, a^{s-1/2} \, e^{-a} \, (1+O(a^{-1})), \quad \text{when} \, a \to \infty,
\end{align}
with the implicit constant depending on $s$ but not on $a$. 


\medskip We finish this subsection with the following basic estimate on the kernel $\mathcal{R}_{k, e_1, \lambda}(z,w)$. Let $k_j$ be the total power of $Z_j(\lambda)$ and $\Bar{Z}_j(\lambda)$ in the monomial $P_k(\lambda)$ defining $\mathcal{R}_{k, e_1, \lambda}$, so that $\sum_{j=1}^n k_j = k.$

\begin{lemma} \label{lem:V_{k_1}}
The integral kernel $\mathcal{R}_{k, e_1, \lambda}(z,w)$ satisfies the estimate
\begin{align} \label{est:lem:V_{k_1}} 
|\mathcal{R}_{k, e_1, \lambda}(z,w)| \lesssim \mathcal{V}_{k_1}(z,w), 
\end{align} 
uniformly in $\lambda \neq 0$ and $x_1-u_1>1$. 
\end{lemma}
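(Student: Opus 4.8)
The target estimate is the global analogue of Lemma \ref{lem:ker-estimates}, but with the exponential decay in $|z-w|$ of Lemma \ref{lem:ker-estimate-global} replaced by the weaker (and sharper) $\mathcal{V}_{k_1}$-type decay that keeps track of only the $(2n-1)$ coordinates transverse to the drift direction $e_1$. The plan is to start from the pointwise bound that each term of $\mathcal{R}_{k,e_1,\lambda}(z,w)$ is dominated by a term of the form \eqref{eq:general_term_Riesz_transform-2}, namely $E_\lambda(z,w)$ with $0\le l\le k/2$, $0\le m\le k-2l$, and then perform the $t$-integral more carefully than in Lemma \ref{lem:ker-estimate-global}, retaining the Gaussian factor in $|z-w|^2$ only to the extent needed. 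The key point is that the power of $|z-w|$ that survives must be $\tfrac{k_1-2n-1}{2}$, i.e.\ governed by $k_1$ (the total order of differentiation in the $Z_1,\bar Z_1$ directions) and \emph{not} by the full order $k$; this is exactly the phenomenon isolated by Li--Sj\"ogren in \eqref{def-kernel-Vk}, and it is why one writes $P_k(\lambda)=\prod_j(\text{stuff in }Z_j,\bar Z_j)$ and tracks the indices $k_j$ separately.

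The first step is bookkeeping: in \eqref{eq:general_term_Riesz_transform} the factors $\{\lambda(z_j-w_j)\}^{p_j}\{\lambda(\bar z_j-\bar w_j)\}^{q_j}$ for $j\ge 2$ involve only the transverse coordinates, while for $j=1$ they involve $z_1-w_1$. Since we are in the region $x_1-u_1>1$ and $|z-w|>1$ (the $\mathcal{V}_k$ kernel carries $\chi_{\{x_1-u_1>1\}}$, and for $|z-w|\le 2$ one invokes Lemma \ref{lem:ker-estimates} together with the ball-volume asymptotics \eqref{ineq:ball-volume-asymptote} to absorb the local part), one bounds $|z_1-w_1|\le |x_1-u_1|+|y_1-v_1|$ and, crucially, estimates the first-coordinate difference factors by powers of $|z-w|$ while using $|z'-w'|$ only in the Gaussian. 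After using $e^{-\frac14\lambda\coth(\lambda t)|z-w|^2}\le e^{-\frac14\lambda\coth(\lambda t)|z'-w'|^2}$ is the wrong direction — rather one writes $|z-w|^2=|x_1-u_1|^2+|z'-w'|^2$ and keeps $e^{-\frac{1}{4}\lambda\coth(\lambda t)|x_1-u_1|^2}$ to combine with $e^{x_1-u_1}$ and the $e^{-2x_1}$ prefactor to produce the correct $e^{-2x_1}$ and the transverse Gaussian $\exp(-\tfrac14|z'-w'|^2/|z-w|)$ after the change of variables. Concretely, splitting $t\in(0,1/|\lambda|)$ and $t\in(1/|\lambda|,\infty)$ as in Lemma \ref{lem:ker-estimate-global}, on each piece one reduces the $t$-integral to a $B_s(a)$-type integral \eqref{estimate:gamma-type-integral} in the variable $a\asymp|z-w|$ (or $|x_1-u_1|$), whose asymptotics \eqref{estimate:gamma-type-integral} supply the factor $a^{s-1/2}e^{-a}$; the exponent arithmetic, driven by $l_1,p_1,q_1\le$ bounds in terms of $k_1$, then yields precisely the power $|z-w|^{(k_1-2n-1)/2}$.

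The main obstacle I anticipate is the exponent arithmetic: one must check that after collecting (i) the factor $t^{k/2-1}$, (ii) the $n$ powers of $t$ from $(\lambda/\sinh(\lambda t))^n\sim t^{-n}$ for small $\lambda t$, (iii) the powers of $\coth(\lambda t)\sim 1/(\lambda t)$, (iv) the polynomial factors $\{\lambda(z_j-w_j)\}^{p_j}$ etc., and (v) the change of variables that turns the $t$-integral into $B_s(a)$, the surviving power of $|z-w|$ equals $\tfrac{k_1-2n-1}{2}$ exactly — and that the contributions from $j\ge 2$, together with the $p_1,q_1$ factors, do not worsen it. Here the constraints $l_j\le k_j/2$, $d_j\le l_j+p_j+q_j$, $p_j+q_j\le k_j-2l_j$ from \eqref{eq:general_term_Riesz_transform} are what make it work: the transverse factors $|\lambda(z_j-w_j)|^{p_j+q_j}$ for $j\ge2$ get absorbed into the transverse Gaussian via $r^Ne^{-r}\lesssim 1$ (estimate \eqref{estimates:hyperbolic-trignometric}) after the change of variables, contributing nothing, while the $j=1$ factors contribute at most $|z-w|^{p_1+q_1+l_1}\le|z-w|^{\,\text{(something)}\le k_1}$, and the Gamma-integral deficit $e^{-|x_1-u_1|}$ combines with $e^{x_1-u_1}e^{-2x_1}$ to give the clean $e^{-2x_1}$. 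A secondary subtlety is uniformity in $\lambda$: one must verify, as in Lemmas \ref{lem:ker-estimates} and \ref{lem:ker-estimate-global}, that all the $\sup_{\lambda,t}$ bounds on $(\lambda t/\sinh(\lambda t))^n\{(\lambda t)\coth(\lambda t)\}^{l+m}$ are finite by \eqref{estimates:hyperbolic-trignometric}, so that the implicit constant depends only on $k$ (hence on $n$) and not on $\lambda$. Once these are in hand, summing over the finitely many terms of the form \eqref{eq:general_term_Riesz_transform} gives \eqref{est:lem:V_{k_1}}.
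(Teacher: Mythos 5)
Your sketch has the right skeleton: return to the per-coordinate structure in \eqref{eq:general_term_Riesz_transform} rather than the coarse $E_\lambda$ of \eqref{eq:general_term_Riesz_transform-2}, reduce the $t$-integral to a $B_s$-type integral via $\lambda\coth(\lambda t)\ge 1/t$, and aim to absorb the transverse polynomial factors into a transverse Gaussian so that only $k_1$ controls the surviving power of $|z-w|$. But the mechanism you propose for actually \emph{producing} that transverse Gaussian is different from the paper's and does not go through as described. You suggest splitting $|z-w|^2=(x_1-u_1)^2+|z'-w'|^2$ inside the heat-kernel Gaussian, keeping $e^{-\frac14\lambda\coth(\lambda t)(x_1-u_1)^2}$ to pair with $e^{x_1-u_1}$, and obtaining $\exp\left(-\tfrac14|z'-w'|^2/|z-w|\right)$ ``after the change of variables.'' Once you split, however, the leftover factor $e^{-\frac14\lambda\coth(\lambda t)|z'-w'|^2}$ still carries $t$-dependence inside the integral: if you run the $B_s$ integral in the longitudinal variable $a=|x_1-u_1|$ alone you lose it, and if you bound $\lambda\coth(\lambda t)\ge|\lambda|$ to pull it out you get a $\lambda$-dependent Gaussian that degenerates as $\lambda\to0$. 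Neither route produces $\exp\left(-\tfrac14|z'-w'|^2/|z-w|\right)$ with the $1/|z-w|$ in the denominator.

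The step you are missing is the elementary identity the paper uses \emph{after} the $t$-integral: one keeps the Gaussian intact, applies \eqref{estimate:gamma-type-integral} with $a=|z-w|$ to get a factor $|z-w|^{s-1/2}e^{-|z-w|}$, and then writes
$$ |z-w|-(x_1-u_1)=\frac{|z'-w'|^2}{|z-w|+(x_1-u_1)}\ge\frac12\,\frac{|z'-w'|^2}{|z-w|}\qquad(x_1-u_1>0), $$
so that $e^{-x_1-u_1}e^{-|z-w|}=e^{-2x_1}e^{-(|z-w|-(x_1-u_1))}\le e^{-2x_1}\exp\left(-\tfrac12|z'-w'|^2/|z-w|\right)$. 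Half of this Gaussian is retained; the other half, written against $|z'-w'|^{N}=\left(|z'-w'|^2/|z-w|\right)^{N/2}|z-w|^{N/2}$, swallows all the transverse polynomial factors via $r^N e^{-r/4}\lesssim 1$, and the leftover powers of $|z-w|$ assemble (using $\sum_j k_j=k$, $m_j+p_j=k_j$) to exactly $\tfrac{k_1-2n-1}{2}-p_1\le\tfrac{k_1-2n-1}{2}$. Two smaller points: you propose to ``absorb the local part'' via Lemma \ref{lem:ker-estimates}, but the hypothesis $x_1-u_1>1$ already forces $|z-w|>1$, so there is no local region inside this lemma; and your opening paragraph starts from $E_\lambda$ even though that form has already merged all coordinate differences into $|z-w|^m$ and cannot distinguish $k_1$ from $k$ --- the switch to \eqref{eq:general_term_Riesz_transform} in your second paragraph is necessary and should be made from the outset.
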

\begin{proof}
Let $x_1-u_1>1.$ Recall the integral kernel of $(-L_{e_1,\lambda})^{-k/2}$ as given in \eqref{eq:kernel-fractional-power-twisted-lap-drift}. It can be easily 
verified via recursive calculations that the integral kernel $\mathcal{R}_{k, e_1, \lambda}(z,w)$ is a finite linear combination of the form 
$$\int_{0}^{\infty} t^{\frac{k}{2}-1} e^{-t-x_1-u_1} \left( \frac{\lambda}{\sinh (\lambda t)} \right)^n e^{-\frac{1}{4} \lambda \coth (\lambda t) \, |z-w|^2} \prod_{j=1}^n  \left(\prod_{l=1}^{m_j-p_j} A_{j,l} \right) \left(\prod_{l'=1}^{p_j} B_{j,l'} \right)
e^{\frac{i\lambda}{2} \Im(z \cdot \bar{w})} \, dt, $$
with $0 \leq m_j \leq k_j, \, m_j+p_j=k_j$ and $p_j \leq \text{min}\{m_j,\frac{k_j}{2}\}$. In the above expression, each $A_{j,l}$ is either $\left(-\frac{\delta_{1j}}{2}-\frac{\lambda\coth{(\lambda t)}}{4} (z_j-w_j)+\frac{\lambda}{4}(z_j-w_j)\right)$ or $\left(-\frac{\delta_{1j}}{2}-\frac{\lambda\coth{(\lambda t)}}{4} (\bar{z}_j-\bar{w}_j)-\frac{\lambda}{4} (\bar{z}_j-\bar{w}_j)\right)$ and $B_{j,l'}$ is either $\left(-\frac{\lambda}{4}\coth{(\lambda t)} + \frac{\lambda}{4}\right)$ or $\left(-\frac{\lambda}{4}\coth{(\lambda t)} - \frac{\lambda}{4}\right)$.

\medskip Note that each of the above integrals can be dominated by $I_{\lambda,1}(z,w)+I_{\lambda,2}(z,w)$, where
\begin{align*}
& I_{\lambda,1}(z,w) \\ 
&= \int_{0}^{\infty} t^{\frac{k}{2}-1} e^{-t-x_1-u_1} \left( \frac{\lambda}{\sinh (\lambda t)} \right)^n e^{-\frac{|z-w|^2}{4t} } |z_1-w_1|^{m_1-p_1} \prod_{j=2}^n|z_j-w_j|^{m_j-p_j} (\lambda \coth{(\lambda t)})^{\sum_{1}^n m_j} \, dt,
\end{align*} 
and 
$$ I_{\lambda,2}(z,w)= \int_{0}^{\infty} t^{\frac{k}{2}-1} e^{-t-x_1-u_1} \left( \frac{\lambda}{\sinh (\lambda t)} \right)^n e^{-\frac{|z-w|^2}{4t}} \prod_{j=2}^n|z_j-w_j|^{m_j-p_j} (\lambda \coth{(\lambda t)})^{\sum_{2}^n m_j+p_1} \, dt. $$

\medskip Let us first consider $I_{\lambda,1}(z,w)$.
\begin{align*}
& I_{\lambda,1}(z,w) \\ 
&= \int_{0}^{\infty} t^{\frac{k}{2}-1} e^{-t-x_1-u_1} \left( \frac{\lambda}{\sinh (\lambda t)} \right)^n e^{-\frac{|z-w|^2}{4t}} |z_1-w_1|^{m_1-p_1} \prod_{j=2}^n|z_j-w_j|^{m_j-p_j}(\lambda \coth{(\lambda t)})^{\sum_{1}^n m_j} \, dt \\ 
& \lesssim e^{-x_1-u_1} |z_1-w_1|^{m_1-p_1} |z^\prime-w^\prime| ^{\sum_{2}^n (m_j-p_j)} \int_{0}^{\infty} t^{\frac{k}{2}-n-\sum_{1}^n m_j} e^{-t} e^{-\frac{|z-w|^2}{4t}} \, \frac{dt}{t} \\ 
& \sim e^{-x_1-u_1-|z-w|} |z_1-w_1|^{m_1-p_1} |z^\prime-w^\prime| ^{\sum_{2}^n (m_j-p_j)} |z-w|^{\frac{k}{2}-n-\sum_{1}^n m_j-\frac{1}{2}} \\ 
& \lesssim e^{-x_1-u_1-|z-w|} \left(\frac{|z^\prime-w^\prime|^2}{|z-w|}\right)^{\frac{1}{2}\sum_{2}^n (m_j-p_j)} |z-w|^{\frac{k}{2}-n-p_1-\frac{1}{2}\sum_{2}^n (m_j+p_j)-\frac{1}{2}} \\ 
& \sim e^{-x_1-u_1-|z-w|}
\left(\frac{|z^\prime-w^\prime|^2}{|z-w|}\right)^{\frac{1}{2}\sum_{2}^n (m_j-p_j)} |z-w|^{\frac{k_1}{2}-n-p_1-\frac{1}{2}} \\ 
& \lesssim e^{-2x_1} \, |z-w|^{\frac{k_1-2n-1}{2}} \exp \left(-\frac{1}{4}\frac{|z^{\prime}-w^{\prime}|^2}{|z-w|}\right) = \mathcal{V}_{k_1}(z,w),
\end{align*}
where the third step follows from \eqref{estimate:gamma-type-integral} (with $s=\frac{k}{2}-n-\sum_{1}^n m_j$ and $a=|z-w|$), whereas in the second last step we have made use of the basic estimate 
$$ |z-w| - (x_1-u_1) = \frac{|z^\prime-w^\prime|^2}{|z-w|+(x_1-u_1)} \geq \frac{1}{2} \frac{|z^\prime-w^\prime|^2}{|z-w|}.$$

\medskip Similarly,
\begin{align*}
& I_{\lambda,2}(z,w) \\ 
&= \int_{0}^{\infty} t^{\frac{k}{2}-1} e^{-t-x_1-u_1} \left( \frac{\lambda}{\sinh (\lambda t)} \right)^n e^{-\frac{|z-w|^2}{4t}}  \prod_{j=2}^n|z_j-w_j|^{m_j-p_j}(\lambda \coth{(\lambda t)})^{\sum_{2}^n m_j+p_1} \, dt \\ 
& \lesssim e^{-x_1-u_1} |z^\prime-w^\prime| ^{\sum_{2}^n (m_j-p_j)} \int_{0}^{\infty} t^{\frac{k}{2}-n-\sum_{2}^n m_j-p_1} e^{-t} e^{-\frac{|z-w|^2}{4t}} \, \frac{dt}{t} \\ 
& \sim e^{-x_1-u_1-|z-w|}  |z^\prime-w^\prime| ^{\sum_{2}^n (m_j-p_j)} |z-w|^{\frac{k}{2}-n-\sum_{2}^n m_j-p_1-\frac{1}{2}} \\ 
& \lesssim e^{-x_1-u_1-|z-w|} \left(\frac{|z^\prime-w^\prime|^2}{|z-w|}\right)^{\frac{1}{2}\sum_{2}^n (m_j-p_j)} |z-w|^{\frac{k}{2}-n-p_1-\frac{1}{2}\sum_{2}^n (m_j+p_j)-\frac{1}{2}} \\ 
& \sim e^{-x_1-u_1-|z-w|}
\left(\frac{|z^\prime-w^\prime|^2}{|z-w|}\right)^{\frac{1}{2}\sum_{2}^n (m_j-p_j)} |z-w|^{\frac{k_1}{2}-n-p_1-\frac{1}{2}} \\ 
& \lesssim e^{-2x_1} \, |z-w|^{\frac{k_1-2n-1}{2}} \exp \left(-\frac{1}{4}\frac{|z^{\prime}-w^{\prime}|^2}{|z-w|}\right) = \mathcal{V}_{k_1}(z,w),
\end{align*}
where the third step follows from \eqref{estimate:gamma-type-integral} (with $s=\frac{k}{2}-n-\sum_{2}^n m_j-p_1$ and $a=|z-w|$).

\medskip This completes the proof of Lemma \ref{lem:V_{k_1}}. 
\end{proof}


\subsection{Proof of Theorem \ref{thm:higher-order-Riesz-uniform-1-1-log-space}} \label{subsec:proof-two-kernel-estimates-thms-1}

\begin{proof}[Proof of Theorem \ref{thm:higher-order-Riesz-uniform-1-1-log-space}] \label{proof-thm:higher-order-Riesz-uniform-1-1-log-space} Let $\mathcal{R}_{k,e_1,\lambda}$ be as in \eqref{specific-form:Riesz-transform-sec:kernel-estimates}. We split the Riesz transform into two parts: one with integral kernel supported locally, that is, only for $|z-w| < 2$ and the other one with the integral kernel supported at infinity, that is, only for $|z-w| \geq 1$. For the same, let $\eta \geq 0$ be a smooth function on $\C^n$ satisfying $\eta(z) = 1$ if $|z| \leq 1$ and $\eta(z) = 0$ if $|z| \geq 2$.

\medskip Now, we define the kernels $\mathcal{R}^{\infty}_{k, e_1, \lambda}(z,w) = \mathcal{R}_{k, e_1, \lambda}(z,w) \, (1-\eta(z-w))$ and $\mathcal{R}^{loc}_{k, e_1, \lambda}(z,w) = \mathcal{R}_{k, e_1, \lambda}(z,w) \, \eta(z-w)$, and the corresponding operators 
\begin{align*} 
\mathcal{R}^{loc}_{k, e_1, \lambda} \phi(z) & = \int_{\C^n} \phi(w) \, \mathcal{R}^{loc}_{k, e_1, \lambda}(z,w) \, d\mu_{e_1} (w), \quad \text{for} \, z \notin supp(\phi), \\ 
\text{and} \quad \mathcal{R}^{\infty}_{k, e_1, \lambda} \phi(z) & = \int_{\C^n} \phi(w) \, \mathcal{R}^{\infty}_{k, e_1, \lambda}(z,w) \, d\mu_{e_1} (w). 
\end{align*} 

\medskip Thanks to Lemma \ref{lem:ker-estimates}, we have that $\mathcal{R}^{loc}_{k, e_1, \lambda}(z,w)$ satisfies standard estimates of the Calder\'{o}n-Zygmund kernels with bounds independent of $\lambda$ and since $d\mu_{e_1}$ is a local doubling measure, we get that the operator $\mathcal{R}^{loc}_{k, e_1, \lambda}$ is of weak-type (1,1) with seminorm bounds being independent of $\lambda$. 

\medskip Finally, we are left with $\mathcal{R}^{\infty}_{k, e_1, \lambda}$. We write its kernel as the sum of two kernels, namely, 
\begin{align*}
\mathcal{R}^{\infty}_{k, e_1, \lambda}(z,w) &= \mathcal{R}^{\infty}_{k, e_1, \lambda}(z,w) \, \chi_{\{x_1-u_1 > 1\}} + \mathcal{R}^{\infty}_{k, e_1, \lambda}(z,w) \chi_{\{x_1-u_1 \leq 1\}} \\ 
& := \mathcal{R}^{\infty, 1}_{k, e_1, \lambda}(z,w) + \mathcal{R}^{\infty, 2}_{k, e_1, \lambda}(z,w). 
\end{align*} 

\medskip The operator $\mathcal{R}^{\infty, 2}_{k, e_1, \lambda}$ is much more better-behaved. More precisely, for any $k \geq 1$, it is of strong-type $(1,1)$ uniformly in $\lambda$. This will follow from the following bound 
$$ \sup_{w \in \mathbb{C}^n} \int_{\mathbb{C}^n} \left| \mathcal{R}^{\infty, 2}_{k, e_1, \lambda}(z,w) \right| d\mu_{e_1} (z) \leq C, $$
which we now prove. 

\medskip As earlier, working with a prototype term for the Riesz transform (with $0 \leq l \leq k/2$ and $0 \leq m \leq k-2l$), we have 
\begin{align*}
& \int_{\mathbb{C}^n} \left| \mathcal{R}^{\infty, 2}_{k, e_1, \lambda}(z,w) \right| d\mu_{e_1} (z) \\ 
& \lesssim \int_0^\infty \int_{\substack{|z-w|>1\\x_1-u_1\leq 1}} t^{\frac{k}{2}-1} \, e^{-t} \, t^{-n-l-m} \, e^{-\frac{1}{4t} |z-w|^2} \, |z-w|^{m} \, dz \, dt \\ 
& \lesssim \int_0^\infty \int_{|z-w|>1} t^{\frac{k}{2}-1} \, e^{-t} \, t^{-n-l-m} \, (|z-w|^2 / t)^{-\frac{1}{2} (2n+m+2)} \, |z-w|^{m} \, dz \, dt \\ 
& = \int_0^\infty t^{\frac{1}{2} (k-2l-m)} \, e^{-t} \, dt \, \int_{|z-w|>1} \frac{1}{|z-w|^{2n+2}} \, dz \leq C, 
\end{align*}
with $C$ independent of $\lambda$ as desired. 

\medskip So, we are only left to analyse the operator  $\mathcal{R}^{\infty, 1}_{k, e_1, \lambda}$ in various cases of $k_1 \geq 0$. 

\medskip \noindent \textbf{Proof of part $(1)$ of Theorem \ref{thm:higher-order-Riesz-uniform-1-1-log-space}:} We know from Proposition 3.1 of \cite{Li-Sjogren-drift-sharp-endpoint-Euclidean-Canad-2021} that the operators $\mathcal{V}_0$, $\mathcal{V}_1$ and $\mathcal{V}_2$ are of weak-type $(1,1)$. Therefore, it follows from Lemma \ref{lem:V_{k_1}} that for $k_1 \in \{0, 1, 2\}$, the operator $\mathcal{R}^{\infty, 1}_{k, e_1, \lambda}$ is of weak-type $(1,1)$, uniformly in $\lambda$. This completes the claim of part $(1)$ of Theorem \ref{thm:higher-order-Riesz-uniform-1-1-log-space}. 

\medskip \noindent \textbf{Proof of part $(2)$ of Theorem \ref{thm:higher-order-Riesz-uniform-1-1-log-space}:} For any $k_1 \geq 3$, it follows from Proposition 3.4 of \cite{Li-Sjogren-drift-sharp-endpoint-Euclidean-Canad-2021} that 
\begin{align*} 
\mu_{e_1} \{z: |\mathcal{V}_{k_1} f(z)|>\alpha\} \leq C_k \int_{\mathbb{C}^n} \frac{|f|}{\alpha} \left(1+ln^+\frac{|f|}{\alpha}\right)^{\frac{k_1}{2}-1} d\mu_{e_1}, 
\end{align*}
In view of the above estimate, Lemma \ref{lem:V_{k_1}} implies that 
\begin{align*} 
\mu_{e_1} \{z: |\mathcal{R}^{\infty, 2}_{k, e_1, \lambda} f(z)|>\alpha\} \leq C_k \int_{\mathbb{C}^n} \frac{|f|}{\alpha} \left(1+ln^+\frac{|f|}{\alpha}\right)^{\frac{k_1}{2}-1} d\mu_{e_1}, 
\end{align*}
uniformly in $\lambda$, completing the claimed estimate \eqref{ineq:thm-LlogL} in part $(2)$ of Theorem \ref{thm:higher-order-Riesz-uniform-1-1-log-space}. 

\medskip Finally, the sharpness of estimate \eqref{ineq:thm-LlogL} can be shown with the help of the transference technique. For the same, take the specific Riesz transform $T_\lambda = X_1(\lambda)^k (-L_{e_1,\lambda})^{-k/2}$. Note that in this example $k_1 = k\geq 3$. If there exists some $r < \frac{k_1}{2}-1$ such that 
\begin{align*} 
\mu_{e_1} \{z : | T_\lambda f(z)| > \alpha\} \leq C_{k,r} \int_{\mathbb{C}^n} \frac{|f|}{\alpha} \left(1+ln^+\frac{|f|}{\alpha}\right)^r d\mu_{e_1}, 
\end{align*}
for every $\alpha>0$, $\lambda \neq 0$ and $f \in L(1+ln^+L)^r (\mathbb{C}^n, \, d\mu_{e_1})$, then by Fatou's lemma 
\begin{align*} 
\mu_{e_1} \{z : | \partial_{x_1}^k (-\Delta_{e_1})^{-k/2} f(z)| > \alpha\} &= \mu_{e_1} \{z : \lim_{\lambda \to 0} | T_\lambda f(z)| > \alpha\} \\ 
& \leq \liminf_{\lambda \to 0} \mu_{e_1} \{z : | T_\lambda f(z)| > \alpha\} \\ 
& \leq C_{k,r} \int_{\mathbb{C}^n} \frac{|f|}{\alpha} \left(1+ln^+\frac{|f|}{\alpha}\right)^r d\mu_{e_1}, 
\end{align*}
for every $\alpha>0$, $f \in L(1+ln^+L)^r (\mathbb{C}^n, \, d\mu_{e_1})$. But that would contradict the assertion of Theorem \ref{thm:LS-Riesz-higher-order-sharp-Euclidean} and hence no $r < \frac{k_1}{2}-1$ will work in the estimate \eqref{ineq:thm-LlogL}. 
\end{proof}


\subsection{Proof of Theorem \ref{thm:higher-order-bdd-drift}} \label{subsec:proof-two-kernel-estimates-thms-2}

\begin{proof}[Proof of Theorem \ref{thm:higher-order-bdd-drift}] 
In view of the discussion from Sections \ref{subsec:drift-change-with-rotation} and \ref{subsec:drift-change-with-scaling}, the weak-type $(1,1)$ bound for $\mathcal{R}_{k, \nu, \lambda}$ is equivalent to that for $\mathcal{R}_{k, e_1, \lambda/|\nu|^2}$. Therefore, it suffices to show that for a given $M>0$ and $k \geq 3$, the operators $\mathcal{R}_{k, e_1, \delta}$ are of weak-type $(1,1)$ uniformly over $|\delta| \geq M$. 

\medskip We have already shown in the proof of Theorem \ref{thm:higher-order-Riesz-uniform-1-1-log-space} that the operator $\mathcal{R}^{loc}_{k, e_1, \delta}$ is of weak-type $(1,1)$ uniformly in $\delta$. Therefore, we will be done if we could show that $\mathcal{R}^{\infty}_{k, e_1, \delta}$ is of weak-type $(1,1)$ uniformly over $|\delta| \geq M$. In fact, $\mathcal{R}^{\infty}_{k, e_1, \delta}$ is of strong-type $(1,1)$ uniformly over $|\delta| \geq M$. 

\medskip From Lemma \ref{lem:ker-estimate-global}, we have for all $|\delta| \geq M$, 
\begin{align*}
|\mathcal{R}^{\infty}_{k, e_1, \delta}(z,w)| & \lesssim e^{-2x_1} \, e^{|z-w|} \, e^{-\frac{|\delta|}{8} |z-w|^2} \, \chi_{\{|z-w| > 1\}} \lesssim_M e^{-2x_1} \, e^{|z-w|} \, e^{\frac{-M}{8} |z-w|^2} \, \chi_{\{|z-w| > 1\}}, 
\end{align*}
and therefore 
\begin{align*}
\int_{\mathbb{C}^n} |\mathcal{R}^{\infty}_{k, e_1, \delta}(z,w)| \, d\mu_{e_1}(z) \lesssim_M \int_{|z-w|>1} e^{|z-w|} \, e^{\frac{-M}{8} |z-w|^2} \, dz = C_M < \infty, 
\end{align*}
which implies that $\mathcal{R}^{\infty}_{k, e_1, \delta}$ is of strong-type $(1,1)$ uniformly over $ |\delta| \geq M$. 
\end{proof}


\section*{Acknowledgements} 
We are grateful to Prof. Sundaram Thangavelu for a number of important discussions at various stages of the work. First author was supported by the Senior Research Fellowship from IISER Bhopal. 


\bibliographystyle{amsalpha}

\providecommand{\bysame}{\leavevmode\hbox to3em{\hrulefill}\thinspace}
\providecommand{\MR}{\relax\ifhmode\unskip\space\fi MR }
\providecommand{\MRhref}[2]{%
  \href{http://www.ams.org/mathscinet-getitem?mr=#1}{#2}
}
\providecommand{\href}[2]{#2}

\end{document}